\allowdisplaybreaks \numberwithin{equation}{section}
\newtheorem{theorem}{Theorem}[section]
\newtheorem{proposition}[theorem]{Proposition}
\newtheorem{lemma}[theorem]{Lemma}
\newtheorem{thm}{Theorem}[section]
\newtheorem{lem}[thm]{Lemma}
\newtheorem{prop}[thm]{Proposition}
\theoremstyle{definition}
\newtheorem{remark}[theorem]{Remark}
\newcommand{\R}{\mathbb{R}}
\newcommand{\ds}{\displaystyle}
\begin{document}

\title[bubble solutions for a Schr\"{o}dinger equation with critical growth]
{ New type of positive bubble solutions for a critical Schr\"{o}dinger equation}

 \author{Qihan  He,\,\,\,Chunhua Wang$^{\dagger}$\,\,\,and\,\,\,Qingfang Wang}
\address{College  of  Mathematics and Information Sciences, Guangxi University, Guangxi, 530003, Peopel's Republis of China }
\email{ heqihan277@163.com}
\address{School of Mathematics and Statistics \& Hubei Key Laboratory of Mathematical Sciences, Central China Normal University, Wuhan, 430079, P. R. China }
\email{  chunhuawang@ccnu.edu.cn}
\address{School of Mathematics and Computer Science, Wuhan Polytechnic University, Wuhan, 430023, P. R. China }
\email{ hbwangqingfang@163.com}


\thanks{$^\dagger$ Corresponding author: Chunhua Wang}
 \begin{abstract}
In this paper, we investigate the following critical elliptic equation
$$
-\Delta u+V(y)u=u^{\frac{N+2}{N-2}},\,\,u>0,\,\,\text{in}\,\R^{N},\,\,u\in
H^{1}(\R^{N}),
$$
where
$V(y)$ is a bounded non-negative function in
$\R^{N}.$ Assuming that $V(y)=V(|\hat{y}|,y^{*}),y=(\hat{y},y^{*})\in \R^{4}\times \R^{N-4}$
  and gluing together bubbles with different concentration rates, we
obtain new solutions provided that $N\geq 7,$ whose concentrating points are close to
 the point $(r_{0},y^{*}_{0})$ which is a stable critical point
 of the function $r^{2}V(r,y^{*})$ satisfying $r_{0}>0$ and $V(r_{0},y^{*}_{0})>0.$
In order to construct such new bubble solutions for the above problem,
 we first prove a non-degenerate result for the positive multi-bubbling solutions constructed in
\cite{PWY-18-JFA} by some local Pohozaev identities, which is of great interest independently.
Moreover, we give an example which satisfies the assumptions we impose.

{ Key words }:  critical; new bubble solutions; non-degeneracy; local Pohozaev identities.

{ AMS Subject Classification }:35B05; 35B45.

 \end{abstract}

\maketitle

\section{ Introduction and the main result }\label{s1}
Standing waves for the following nonlinear Schr\"{o}dinger equation in $\R^N$,
\begin{equation}\label{e1}
i\frac{\partial\psi}{\partial t}=\Delta\psi-\tilde{V}(y)\psi+|\psi|^{p-1}\psi,
\end{equation}
are solutions of the form $\psi(t,y)=e^{i\lambda t}u(y),$
where $i$ denotes the imaginary
part and $\lambda\in \R,~p>1.$
 Assuming that $u(y)$ is positive and vanishes at infinity, we see
 that $\psi$ satisfies \eqref{e1} if and only if $u$ satisfies the following nonlinear elliptic problem
\begin{equation}\label{e2}
  -\Delta u+V(y)u=u^p,\,\,u>0,\,\,\,\,\,\lim\limits_{|y|\rightarrow\infty}u(y)=0,
\end{equation}
where $V(y)=\tilde{V}(y)-\lambda$. Hereafter, we assume that $V(y)$ is bounded and $V(y)\geq 0.$
When $1<p<\frac{N+2}{N-2}$
in \eqref{e2} i.e. the subcritical exponent case, in \cite{wy}
Wei and Yan showed the equation has infinitely many non-radial positive solutions when $V(y)$ is a radially positive function.
 There are various existence results for the subcritical case, such as \cite{CNY1,CNY2,dp}.

In this paper, we will investigate the critical case i.e. $p=\frac{N+2}{N-2}:$
\begin{equation}\label{1.3}
-\Delta u+V(y)u=u^{\frac{N+2}{N-2}},\,\,u>0,\,\,u\in
H^{1}(\R^{N}),
\end{equation}
where  $V(y)\ge 0$  and  $V\not\equiv 0$.
It corresponds to the following well-known Brezis-Nirenberg problem in $S^N$
\begin{equation}\label{1.3'}
-\Delta_{S^N} u=u^{\frac{N+2}{N-2}}+\mu u,\,\,u>0,\,\,\hbox{on}\,\,S^N.
\end{equation}
Indeed, after using the stereographic projection, problem~\eqref{1.3'} can be reduced to \eqref{1.3} with
$$
V(y)=\frac{-4\mu-N(N-2)}{(1+|y|^2)^2},
$$
and $V(y)>0$ if $\mu<-\frac{N(N-2)}{4}$.
Problem \eqref{1.3'} has been studied extensively.
In \cite{BL},
Brezis and Li proved if $\mu>-\frac{N(N-2)}{4},$ then the only solution to \eqref{1.3'}
is the constant $u=(-\mu)^{\frac{N-2}{4}}.$
When $\mu=-\frac{N(N-2)}{4},$
in \cite{D1} Druet (see also Druet and Hebey \cite{DH1,DH2})
proved that the set of positive solutions to \eqref{1.3'}
is compact provided that the energy is bounded.
Furthermore, in \cite{bw,BP} there have been proved that there are more and more non-radial
solutions as $\mu\rightarrow -\infty.$
In \cite{cwy}, Chen, Wei and Yan proved that when
$\mu<-\frac{N(N-2)}{4}$
and $N\geq 5,$ there are infinitely many non-radial solutions to \eqref{1.3'}
whose energy can be made arbitrarily large. This implies that the boundedness
of energy in \cite{D1,DH1} is necessary. When $\mu=-\frac{N(N-2)}{2}$
  and $u^{\frac{N+2}{N-2}}$ is taken place
  of $K(y)u^{\frac{N+2}{N-2}}$ in \eqref{1.3'}
  where $K(y)$ being a fixed smooth function,  in \cite{WY1}
  Wei and Yan showed that it has infinitely many non-radial positive solutions.
More results on
the existence, multiplicity and qualitative properties of solutions for non-compact elliptic problems can also be found in \cite{BL,DLY,GS,LWX,Re,Re1,WY4} and the references therein.

It is not difficult to see that
if  $V\ge 0$
and $V\not\equiv 0$, then the mountain pass value for problem \eqref{1.3} is not a critical value of the corresponding functional.
Hence all the arguments based on the concentration compactness arguments \cite{li,li1} can not be used to obtain an existence result of solutions  for
\eqref{1.3}.
To our best knowledge, the first existence result for \eqref{1.3}
is due to
 Benci and Cerami \cite{bc}. They proved that  if  $\|V\|_{L^{\frac{N}{2}}(\mathbb R^N)}$ is suitably  small,  \eqref{1.3}
has a solution whose energy is in the interval $\bigl(\frac1N S^{\frac N2}, \frac2N S^{\frac N2}\bigr)$, where $S$ is the best Sobolev
constant in the embedding $D^{1, 2}(\mathbb R^N) \hookrightarrow L^{\frac{2N}{N-2}}(\mathbb R^N)$.
For the Brezis--Nirenberg problem in $S^N$,  Benci and Cerimi's result only yields an existence result if
$-4\mu-N(N-2)>0$ is suitably small.
After \cite{bc}, there is no other result for \eqref{1.3} for a long time
 until  the work by Chen, Wei and Yan \cite{cwy}. In \cite{cwy}, it is
proved that \eqref{1.3} has infinitely many  nonradial
solutions  if  $N\ge 5$, $V(y)$ is radially symmetric and
$r^{2}V(r)$ has a local maximum point, or a local minimum point
$r_{0}>0$ with $V(r_{0})>0$.
Note that this condition is necessary for the existence of solutions since by the following Pohozaev identity
\begin{equation}\label{1.1'}
\int_{\R^N} \Big(V(|y|)+\frac12 |y|V'(|y|) \Big)u^2=0,
\end{equation}
\eqref{1.3} has no solution if $r^2 V(r)$ is always non-decreasing, or non-increasing.
Recently, in \cite{PWY-18-JFA} Peng, Wang and Yan showed that problem \eqref{1.3} has infinitely many solutions
 by
 introducing some local Pohozaev type identities in the finite-dimensional reduction method, where $V(y)$ satisfies
 the following condition:

 $(V')$ suppose that $V(y)=V(|y'|,y'')=V(r,y''), (y',y'')\in \R^{2}\times \R^{N-2}$
 and $r^{2}V(r,y'')$ has a critical point $(r_{0},y''_{0})$ satisfying $r_{0}>0$ and
 $V(r_{0},y''_{0})>0$ and $deg(\nabla (r^{2}V(r,y'')),(r_{0},y''_{0}))\neq 0.$

It is well known that the functions
$$
U_{x,\lambda}(y)=[N(N-2)]^{\frac{N-2}{4}}\Bigl(\frac{\lambda}{1+\lambda^{2}|y-x|^{2}}\Bigr)^{\frac{N-2}{2}},\,\,\,\lambda>0,\,\,
x\in\R^{N},
$$ are the only solutions to the problem
\begin{equation}\label{1.7}
-\Delta u=u^{\frac{N+2}{N-2}},\,\,\,u>0\,\,\,\text{in}\,\,\R^{N}.
\end{equation}

 Define
\[
\begin{split}
 H_{s}=\Bigl\{u:u\in H^{1,2}(\R^{N}),\;\; & u(y_1, -y_2,y'')=
u(y_1, y_2, y''),
 \\
 &
u(r\cos\theta,r\sin\theta,y'')=u\Bigl(r\cos\Bigl(\theta+\frac{2\pi j
}{m}\Bigl),r\sin\Bigl(\theta+\frac{2\pi j
}{m}\Bigl),y''\Bigl)\Bigl\}.
\end{split}
\]
Let
$$
x_{j}=\Bigl(\bar r \cos\frac{2(j-1)\pi}{m},\bar r
\sin\frac{2(j-1)\pi}{m},\bar{y}''\Bigr),\,\,\,j=1,2,\cdots,m,
$$
where $\bar{y}''$ is a vector in $\R^{N-2}.$

 Let
$\delta>0$ be a small constant, such that  $r^2V(r, y'')>0$ if  $|(r,y'')- (r_0, y_0'')|\le
10\delta$.   Let $\zeta(y)= \zeta(|y'|, y'')$ be a
smooth function satisfying $\zeta=1$ if $|(r,y'')- (r_0, y_0'')|\le
\delta$, $\zeta=0$ if $|(r,y'')- (r_0, y_0'')|\ge 2\delta$,  and $0\le
\zeta\le 1$. Denote
\begin{eqnarray*}
Z_{x_{j},\lambda}(y)=\zeta  U_{x_j,\lambda},\,\,\,Z^*_{\bar r,\bar{y}'',\lambda}=\sum_{j=1}^m  U_{x_j,\lambda},\;\;\;
Z_{\bar r,\bar{y}'',\lambda}(y)=\sum_{j=1}^{m}Z_{x_{j},\lambda}(y).
\end{eqnarray*}
Set
\begin{align*}
x_j=\Big(\bar{r}\cos\frac{2(j-1)\pi}{m},\bar{r}\sin\frac{2(j-1)\pi}{m},\bar{y}''\Big),\,j=1,\cdots,m,\,\bar{y}''\in \R^{N-2}.
\end{align*}

\medskip

We recall that the result obtained in \cite{PWY-18-JFA} is as follows.

{\bf Theorem~A.}   {\it Suppose that  $V\ge 0$ is bounded and  belongs to $C^1$.
If  $V(|y'|,y'') $ satisfies $(V')$ and  $N\ge 5$, then
 there exists a
positive integer $m_{0}>0$, such that for any integer $m\geq m_{0}$,
\eqref{1.3} has a solution $u_{m}$ of the form
\begin{equation}\label{sol}
u_{m}=Z_{\bar r_{m},\bar{y}_{m}'',\lambda_{m},}+\varphi_{m}=\sum_{j=1}^{m}\zeta
U_{x_{j},\lambda_{m}}+\varphi_{m},
\end{equation}
where $\varphi_{m}\in H_{s}$. Moreover,  as
$m\rightarrow +\infty$,
$\lambda_{m}\in[L_{0}m^{\frac{N-2}{N-4}},L_{1}m^{\frac{N-2}{N-4}}]$,
$ (\bar r_m,\bar{y}_{m}'')\to (r_{0},y''_{0}),$ and
$\lambda_{m}^{-\frac{N-2}{2}}\|\varphi_{m}\|_{L^{\infty}}\rightarrow
0$.}

To construct new bubble solutions for problem \eqref{1.3},  we first want to apply some local Pohozaev identities
to prove the multi-bubbling solutions in \textbf{Theorem A} above is non-degenerate.

In order to state our main result, we give some assumptions of the function $V(y):$

 ($V$) suppose that $V(y)=V(|\hat{y}|,y^{*})=V(r,y^{*}), (\hat{y},y^{*})\in \R^{4}\times \R^{N-4}$
 and $r^{2}V(r,y^{*})$ has a critical point $(r_{0},y^{*}_{0})$ satisfying $r_{0}>0$ and
 $V(r_{0},y^{*}_{0})>0$ and $deg(\nabla (r^{2}V(r,y^{*})),(r_{0},y^{*}_{0}))\neq 0.$

 ($\tilde{V}$)
 $$
 det(A_{i,l})_{(N-3)\times (N-3)}\neq 0,\,\,i,l=1,2,\cdots,N-3,
$$
where
\begin{equation*}
A_{i,l}=
  \left\{
    \begin{array}{ll}
      \Big[ \frac{\partial^{2}V}{\partial r^{2}}-\big(\frac{\frac{\partial \Delta V}{\partial y_{1}}}{2\Delta V}
+\frac{\nu_{1}}{\langle \nu,x_{1}\rangle}\big)\big(r\frac{\partial^{2}V}{\partial r^{2}}
+\sum\limits_{j=5}^{N}y_{j}\frac{\partial^{2}V}{\partial r\partial y_{j}}\big)\Big](r_{0},y^{*}_{0}),
 \text{when}\,\,i=l=1;\vspace{0.12cm}\\
\Big[ \frac{\partial^{2}V}{\partial r\partial y_{l+3}}-\big(\frac{\frac{\partial \Delta V}{\partial y_{1}}}{2\Delta V}
+\frac{\nu_{1}}{\langle \nu,x_{1}\rangle}\big)\big(r\frac{\partial^{2}V}{\partial r\partial y_{l+3}}
+\sum\limits_{j=5}^{N}y_{j}\frac{\partial^{2}V}{\partial y_{j}\partial y_{l+3}}\big)\Big](r_{0},y^{*}_{0}),  \\
\text{when}\,\,i=1,l=2,3,...,N-3;\vspace{0.12cm}\\
\cos\frac{2i\pi}{m}\Big[ \frac{\partial^{2}V}{\partial r\partial y_{i+3}}-\big(\frac{\frac{\partial \Delta V}{\partial y_{i+3}}}{2\Delta V}
+\frac{\nu_{i+3}}{\langle \nu,x_{1}\rangle}\big)\big(r\frac{\partial^{2}V}{\partial r^{2}}
+\sum\limits_{j=5}^{N}y_{j}\frac{\partial^{2}V}{\partial r\partial y_{j}}\big)
\Big](r_{0},y^{*}_{0}),
\\
\text{when}\,\,i=2,3,...,N-3,l=1;\vspace{0.12cm}\\
      \Big[\frac{\partial^{2}V}{\partial y_{i+3}\partial y_{l+3}}-
\big(\frac{\frac{\partial \Delta V}{\partial y_{i+3}}}{2\Delta V}
+\frac{\nu_{i+3}}{\langle \nu,x_{1}\rangle}\big)\big(r\frac{\partial^{2}V}{\partial r\partial y_{l+3}}
+\sum\limits_{j=5}^{N}y_{j}\frac{\partial^{2}V}{\partial y_{j}\partial y_{l+3}}\big)
\Big](r_{0},y^{*}_{0}),
\\
\text{when}\,\,i,l=2,3,...,N-3,
    \end{array}
  \right.
\end{equation*}
$\nu_{i}$ and $\nu$ are the $i$-th unit outward normal and unit outward normal respectively on $\Omega_{1}$( defined in \eqref{eq-omiga}).

Assume that
$\delta>0$ is a small constant such that  $r^2V(r, y^{*})>0$ if  $|(r,y^{*})- (r_{0},y_{0}^{*})|\le
10\delta$.
We also define
a cut-off function $\hat{\zeta}(y)= \hat{\zeta}(|\hat{y}|, y^{*})$ be a
smooth function satisfying $\hat{\zeta}=1$ if
$|(r,y^{*})- (r_{0},y_{0}^{*})|\le\delta$, $\hat{\zeta}=0$ if $|(r,y^{*})- (r_{0},y_{0}^{*})|\ge 2\delta$,  and $0\le\hat{\zeta}\le 1$.

 \begin{remark}\label{rem-t-8-6}
From the proof of \textbf{Theorem A} in \cite{PWY-18-JFA}, if we substitute the assumption
$(V)$ for the assumption $(V'),$ then only by making some minor modifications we can also prove that
the result of \textbf{Theorem A}
is still true. For simplicity of notations, we still denote the solution as $u_{m},$
and $u_{m}=\sum\limits_{j=1}^m\hat{\zeta}(y)U_{\hat{x}_j,\lambda}+\varphi_{m},$
where
\begin{align*}
\hat{x}_j=\Big(\bar{r}\cos\frac{2(j-1)\pi}{m},\bar{r}\sin\frac{2(j-1)\pi}{m},0,0,\tilde{y}^{*}\Big),\,j=1,\cdots,m.
\end{align*}
\end{remark}

Then, like Remark \ref{rem-t-8-6} we can also find a solution with $n$-bubbles, whose centers lie near
the surface $(r_{0},y_{0}^{*})$ satisfying $|\hat{y}|=|(y_{1},y_{2},y_{3},y_{4})|=r_{0}.$
The question we want to discuss in this paper is whether these two solutions can be glued together
to generate a new type of solutions. In other words, we are concerned with looking for a new solution to \eqref{1.3}, whose shape is, at main order
\begin{align}\label{eqs1.6}
u\approx \sum\limits_{j=1}^m\hat{\zeta}(y)U_{\hat{x}_j,\lambda}+\sum\limits_{j=1}^n\hat{\zeta}(y)U_{p_j,\mu}
:=\sum\limits_{j=1}^mZ_{\hat{x}_j,\lambda}+\sum\limits_{j=1}^nZ_{p_j,\mu},
\end{align}
for $m$ and $n$ big integers, where we take

\begin{align*}
p_j=\Big(0,0,t\cos\frac{2(j-1)\pi}{n},t\sin\frac{2(j-1)\pi}{n},\tilde{y}^*\Big),\,\,j=1,\cdots,n,\,\tilde{y}^*\in \R^{N-4}.
\end{align*}
Here $\bar{r}$ and $t$ are close to $r_0$ and
$\tilde{y}^{*}\rightarrow y^{*}_{0}=(y_{0,5},y_{0,6},...,y_{0,N}).$

The energy functional corresponding to equation \eqref{1.3} is
\[
I(u)=\frac12 \int_{\R^N} \bigl(|\nabla u|^2 +Vu^2\bigr) -\frac1{2^*}\int_{\R^N}|u|^{2^*},\,\,\,\,u\in H^{1}(\R^{N}).
\]
Therefore, generally speaking, a function of the form \eqref{eqs1.6} is an approximate solution to
\eqref{1.3} provided that $\bar{r}, t,\tilde{y}^{*}$ and the parameters $\mu$ and $\lambda$ are such that
\begin{align*}
I'\Big(\sum\limits_{j=1}^mZ_{\hat{x}_j,\lambda}+\sum\limits_{j=1}^nZ_{p_j,\mu}\Big)\sim 0.
\end{align*}
Letting that $\lambda,\mu\rightarrow\infty,$ $\bar{r},t\rightarrow r_{0}$
and
$\tilde{y}^{*}\rightarrow y^{*}_{0},$ we can easily obtain that
\begin{align}\label{es-I}
&I\Big(\sum\limits_{j=1}^mZ_{\hat{x}_j,\lambda}+\sum\limits_{j=1}^nZ_{p_j,\mu}\Big)\cr
&=
(m+n)A+ m\Bigl( \frac{B_{1}V(\bar r,\tilde{y}^{*})}{\lambda^{2}}
-\sum_{j=2}^{m}\frac{B_{2}}{\lambda^{N-2}|\hat{x}_{1}-\hat{x}_{j}|^{N-2}}+    O\bigl( \frac{1}{\lambda^{2+\epsilon}}\bigr)  \Bigr)\cr
&\quad+n \Bigl(\frac{C_{1}V(t,\tilde{y}^{*})}{\mu^{2}}
-\sum_{j=2}^{n}\frac{C_{2}}{\mu^{N-2}|p_{1}-p_{j}|^{N-2}}+    O\bigl( \frac{1}{\mu^{2+\epsilon}}\bigr)  \Bigr),
\end{align}
where $A=\big(\ds\frac{1}{2}-\frac{1}{2^{*}}\big)\int_{\R^{N}}|\nabla U_{0,1}|^{2}$
and $B_1$, $B_2$, $C_1$, $C_2$ are some positive constants and $\epsilon>0$ is a small constant.
Note that if $n\gg m,$
then the two terms in \eqref{es-I} are of different
orders, which causes it not easy to find
a critical point of $I.$ Hence it is very difficult
to apply a reduction argument to construct solutions of the form
\eqref{eqs1.6}.

In this paper, we use a new method which was first introduced
by Guo, Musso, Peng and Yan recently in \cite{GMPY-20-JFA} where
they studied the prescribed scalar curvature equation with a radial potential function.
Recall that we intend to glue $n$-bubbles, whose centers
lie on the surface $(r_{0},y^{*}_{0})$
to the $m$-bubbling solution $u_m$ described in Remark \ref{rem-t-8-6}.
The linear operator for such a problem is
\begin{align*}
Q_n\eta=-\Delta\eta+V(y)\eta-(2^*-1)\Big(u_m+\sum\limits_{j=1}^nZ_{p_j,\mu}\Big)^{2^*-2}\eta.
\end{align*}
Away from the points $p_j$, the operator $Q_n$ can be approximated by the linearized operator around $u_m$, defined by
\begin{align}\label{op-l}
L_m\eta=-\Delta\eta+V(y)\eta-(2^*-1)u_m^{2^*-2}\eta.
\end{align}

 The approach we use here is to construct the solution with $m$-bubbles
 whose center is close to $(r_{0},y^{*}_{0})$
 and $n$-bubbles whose center lies near $(r_{0},y^{*}_{0})$ as a perturbation of the solution with the
$m$-bubbles whose center lie near $(r_{0},y^{*}_{0}).$

The main result of this paper is the following:

\begin{theorem}\label{thm1.3}
Assume that  $V\ge 0$ is bounded and  belongs to $C^3(B_{\varrho}((r_{0},y^{*}_{0}))),$ where $\varrho>0$ is small.
Suppose $V(y)$ satisfies the assumptions $(V),(\tilde{V})$ and $N\geq 7.$ Let $u_m$ be a solution
in \textbf{Remark  \ref{rem-t-8-6}} and $m>0$
is a large even number. Then there is an integer $n_0>0$, depending on $m$, such that for any even number $n\geq n_0,$
 \eqref{1.3} has a solution whose main order is of the form
 \eqref{eqs1.6} for some $t_n\rightarrow r_0, \tilde{y}^{*}\rightarrow y^{*}_{0}$ and $\mu_n\sim n^{\frac{N-2}{N-4}}$.
\end{theorem}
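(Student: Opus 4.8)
The plan is to use a Lyapunov--Schmidt reduction performed \emph{relative to the solution} $u_m$, following the strategy of Guo--Musso--Peng--Yan \cite{GMPY-20-JFA}, rather than a reduction near a sum of $m+n$ standard bubbles (which fails because, as noted in \eqref{es-I}, the energy contributions of the $m$-cluster and the $n$-cluster live at incomparable scales when $n\gg m$). So I would fix $m$ large and even, fix the non-degenerate solution $u_m$ from Remark~\ref{rem-t-8-6}, and look for a solution of \eqref{1.3} in the form $u=u_m+\sum_{j=1}^n Z_{p_j,\mu}+\phi$, where $\phi$ is a small remainder in a suitable weighted $L^\infty$ space, the $n$ points $p_j$ are arranged with the prescribed $\mathbb Z_n$-symmetry on the circle in the $(y_3,y_4)$-plane at radius $t$ with $y^*$-component $\tilde y^*$, and $\mu\sim n^{(N-2)/(N-4)}$. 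First I would set up the symmetric function space (invariance under the reflection $y_2\mapsto -y_2$ and under rotation by $2\pi/n$ in the $(y_3,y_4)$-plane, plus invariance under the $\mathbb Z_m$-action respected by $u_m$), so that the reduced problem becomes finite-dimensional in the parameters $(t,\tilde y^*,\mu)$ only.

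Second, I would carry out the linear theory: the operator that controls the reduction is $Q_n\eta=-\Delta\eta+V\eta-(2^*-1)(u_m+\sum_j Z_{p_j,\mu})^{2^*-2}\eta$. The key point is that near each $p_j$ the term $(u_m+\sum Z_{p_j,\mu})^{2^*-2}$ is dominated by $(2^*-1)Z_{p_j,\mu}^{2^*-2}$, whose kernel is spanned by the known bubble directions $\partial_\mu U_{p_j,\mu}$, $\nabla_{p_j}U_{p_j,\mu}$; away from the $p_j$ the operator is a compact perturbation of $L_m$ in \eqref{op-l}, and here the \textbf{non-degeneracy of $u_m$}---which is exactly the first main result of the paper, proved via local Pohozaev identities---guarantees invertibility in the symmetric class. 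Patching these two regimes with cut-offs and the symmetry-reduced spectral gap, one obtains a uniform a priori estimate $\|\eta\|_* \le C\|Q_n\eta\|_{**}$ on the subspace orthogonal to the approximate kernel $\{Z_{ij}\}$ of the $n$ bubbles, with $C$ independent of $n$ (though depending on $m$). Combined with the standard fixed-point argument for the nonlinear remainder, this produces a unique $\phi=\phi_n(t,\tilde y^*,\mu)$ solving the projected equation, with the expected size estimate coming from the error $\|Q_n(u_m+\sum Z_{p_j,\mu})-\text{RHS}\|_{**}$, whose leading contributions are the interaction between consecutive $n$-bubbles ($\sim \mu^{-(N-2)}(n/\mu)^{N-2}$ per the geometry), the potential term $\sim V(t,\tilde y^*)\mu^{-2}$, and the cross term measuring how $\sum Z_{p_j,\mu}$ feels $u_m$ and $V$ near the circle.

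Third, I would set up and solve the reduced finite-dimensional problem. Plugging $u=u_m+\sum Z_{p_j,\mu}+\phi_n$ back in, the solvability reduces to finding a critical point of the reduced energy $F_n(t,\tilde y^*,\mu):=I(u_m+\sum Z_{p_j,\mu}+\phi_n)$, or equivalently to solving a system of the form: a $\mu$-equation balancing the bubble-interaction term against the potential term (this is what forces $\mu_n\sim n^{(N-2)/(N-4)}$ and $t_n\to r_0$, $V(t_n,\tilde y^*_n)>0$), a $t$-equation and an $\tilde y^*$-equation involving $\nabla_{(t,\tilde y^*)}\big(r^2V(r,y^*)\big)$ at leading order. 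Assumption $(V)$ ($r_0>0$, $V(r_0,y_0^*)>0$, nonzero local degree of $\nabla(r^2V)$ at $(r_0,y_0^*)$) gives, via a degree argument, a critical point of the location equations, and assumption $(\tilde V)$---the nondegeneracy of the matrix $(A_{i,l})$, which is precisely the Hessian-type matrix of the reduced functional in the location variables after eliminating $\mu$---ensures this critical point is stable under the $O(\cdot)$ perturbations so the degree is genuinely computable. I expect the \textbf{main obstacle} to be the linear theory in the intermediate region: proving the uniform (in $n$) invertibility of $Q_n$ requires combining the nondegeneracy of $u_m$ with a delicate ``no mass escaping to the bubbles'' argument, i.e. ruling out approximate kernel elements concentrating at the $p_j$ that are not captured by the explicit $Z_{ij}$, and doing so with constants that do not blow up as $n\to\infty$ despite the $n$ bubbles clustering on the circle; the choice of weighted norms $\|\cdot\|_*,\|\cdot\|_{**}$ (with weights summing the $n$ bubble profiles plus $u_m$) has to be calibrated carefully so that both the linear estimate and the error estimate close simultaneously. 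A secondary technical point is verifying that the error and all remainder estimates are uniform in $m$ well enough that, after fixing $m$, a threshold $n_0=n_0(m)$ genuinely exists.
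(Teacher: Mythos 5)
Your proposal follows essentially the same route as the paper: perturb around the fixed non-degenerate solution $u_m$ (rather than around $m+n$ bubbles), establish the linear theory for $Q_n$ in weighted norms built from the $n$ bubble profiles using the non-degeneracy of $u_m$ (in the paper this enters through the Green's function bound for $L_m$ and the a priori estimate of Lemma \ref{lem3.1}), estimate the error $l_n$ and the nonlinear remainder, and then find a critical point of the reduced functional $F(t,\tilde y^*,\mu)$ with $\mu_n\sim n^{\frac{N-2}{N-4}}$, $t_n\to r_0$, $\tilde y^*\to y_0^*$. The only bookkeeping slip is your reading of $(\tilde V)$: in the paper it is used exclusively to secure the non-degeneracy of $u_m$ (hence the invertibility of $L_m$ and the Green's function estimate), while the reduced finite-dimensional problem is resolved by the degree condition in $(V)$ combined with local Pohozaev identities as in \cite{PWY-18-JFA,PWW-19-JDE}, not by interpreting $(A_{i,l})$ as a Hessian of the reduced energy.
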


\begin{remark}
Like \cite{PWY-18-JFA}, in section \ref{s3} to deal with the slow decay
of the function $U_{p_{j},\mu}(y)$ when the dimension $N$ is not big,
we introduce the cut-off function $\hat{\zeta}(y).$
\end{remark}

\begin{remark}
We want to point out that if we assume that the small constants $\delta$ in the definition of the cut-off function
$\hat{\zeta}(y)$ and $\vartheta$ in \eqref{thelta}
which are less or equal to $c\mu^{-\frac{1}{2}},$ the result in Theorem \ref{thm1.3} can hold for $N=6.$
It is just technical. In this case, we have the following relation
$$
\frac{1}{\mu^{\frac{1}{2}}}\leq \frac{C}{1+\mu|y-p_{j}|},
$$
which can help us deal with some estimates such as \eqref{w2}.
\end{remark}

\begin{remark}
In Theorem \ref{thm1.3}, in order to obtain the solution $u(y)$ satisfying that it is even about $y_{h},h=1,2,3,4, $
we assume that $m,n$ are both even integers. Otherwise, only to obtain the existence of
the solution $u(y),$ we do not need this requirement.
\end{remark}

To prove Theorem \ref{thm1.3},
first it is very crucial to understand the spectral properties of the linear operator $L_{m}$ and study its
invertibility in some suitable space. We will mainly do this in section \ref{s2}.
Moreover,
since the concentration
points of the bump solutions include a saddle point of $V(r,y^{*}),$
 we can not estimate directly the derivatives of the reduced functional as usual.
 We will apply some local
Pohozaev identities to locate the concentration points of the bump solutions as \cite{PWY-18-JFA,PWW-19-JDE}.
However, in the process of doing the finite-dimensional reduction we need
to compute more carefully, such as the estimate of $J_{4}$ in Lemma \ref{lem3.2},
where we follow some ideas from \cite{GLPY-18-JDE}. Finally, we would like to
point out that the new solutions we construct here which are different from the solutions obtained
in \cite{PWY-18-JFA}.

Our paper is organized as follows. In section~\ref{s2}, we will
prove a non-degenerate result by some local Pohozaev identities,
which is very crucial in constructing a new type of bubbling solutions by applying the finite-dimensional reduction method.
With the non-degenerate result, we construct new solutions and prove Theorem \ref{thm1.3} in section \ref{s3}. In section \ref{sa},
we give some Pohozaev identities.  In section \ref{sb-add}, we discuss the Green function of $L_{m}.$ And we give some basic estimates in Appendix \ref{s5}.
Finally, we give an example of the potential $V(r,y^{*})$ which satisfies the assumptions
$(V)$ and $(\tilde{V})$ in appendix \ref{sb}.

\section{the non-degeneracy of the solutions}\label{s2}
In this section, we mainly prove the non-degeneracy of
the multi-bubbling solutions obtained by Peng, Wang and Yan in \cite{PWY-18-JFA}.

Define
\begin{equation}\label{2.14}
\|u\|_{*}=\sup_{y\in\R^{N}}\Big(\sum_{j=1}^{m}\frac{1}{(1+\lambda_m|y-x_{m,j}|)^{\frac{N-2}{2}+\tau}}\Big)^{-1}\lambda_m^{-\frac{N-2}{2}}|u(y)|
\end{equation}
and
\begin{equation}\label{2.15}
\|f\|_{**}=\sup_{y\in\R^{N}}\Big(\sum_{j=1}^{m}\frac{1}{(1+\lambda_m|y-x_{m,j}|)^{\frac{N+2}{2}+\tau}}\Big)^{-1}\lambda_m^{-\frac{N+2}{2}}|f(y)|,
\end{equation}
where\,$x_{m,j}=(r_m\cos\frac{2(j-1)\pi}{m},r_m\sin\frac{2(j-1)\pi}{m},\bar{x}_{m}'')$, $\tau=\frac{N-4}{N-2}$.

Let
\begin{equation}\label{eq-omiga}
\Omega_j=\Bigl\{y=(y',y'')\in \R^2\times\R^{N-2}: \langle\frac{y{'}}{|y{'}|},\frac{x_{m,j}'}{|x_{m,j}'|}\rangle\geq \cos\frac{\pi}{m}\Bigr\}.
\end{equation}

Define the linear operator
\begin{align}\label{eq-12-1}
L_m\eta=-\Delta\eta+V\eta-(2^*-1)u_m^{2^*-2}\eta.
\end{align}

\begin{lemma}\label{lem2.2}
There is a positive constant $C$ such that
\begin{align*}
|u_m(y)|\leq C\sum\limits_{j=1}^m\frac{\lambda_m^{\frac{N-2}{2}}}{1+(\lambda_m|y-x_{m,j}|)^{N-2}}\,\,
\text{for~all}\,\,\,y\in \R^{N}.
\end{align*}
\end{lemma}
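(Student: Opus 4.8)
The plan is to exploit the precise form of the solution $u_m$ from \textbf{Theorem A} (as adapted in Remark \ref{rem-t-8-6}), namely $u_m = \sum_{j=1}^m \hat\zeta(y)\, U_{x_{m,j},\lambda_m} + \varphi_m$, and to bound each of the two pieces separately. For the bubble part, since $0\le\hat\zeta\le 1$ and $U_{x,\lambda}(y) = [N(N-2)]^{(N-2)/4}\bigl(\lambda/(1+\lambda^2|y-x|^2)\bigr)^{(N-2)/2}$, we have immediately
\[
\Bigl| \sum_{j=1}^m \hat\zeta(y)\, U_{x_{m,j},\lambda_m}(y)\Bigr| \le C \sum_{j=1}^m \frac{\lambda_m^{\frac{N-2}{2}}}{\bigl(1+\lambda_m^2|y-x_{m,j}|^2\bigr)^{\frac{N-2}{2}}} \le C\sum_{j=1}^m \frac{\lambda_m^{\frac{N-2}{2}}}{1+(\lambda_m|y-x_{m,j}|)^{N-2}},
\]
so it remains to control $\varphi_m$ in the same pointwise fashion. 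The information we are given is only $\lambda_m^{-(N-2)/2}\|\varphi_m\|_{L^\infty}\to 0$, which is not by itself enough to yield the full decaying bound; so the real work is to upgrade this $L^\infty$ control to the weighted bound encoded in $\|\cdot\|_*$, i.e. to show $\|\varphi_m\|_* \le C$ uniformly in $m$.

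To do this I would go back to the equation satisfied by $\varphi_m$ in the finite-dimensional reduction of \cite{PWY-18-JFA}: $\varphi_m$ solves a linear problem of the form $L_m'\varphi_m = \ell_m + N_m(\varphi_m) + \sum c_i Z_i$ (the linearized operator around the sum of bubbles, the error term $\ell_m = \Delta Z - V Z + Z^{2^*-1}$, the quadratic nonlinearity, and the Lagrange-multiplier combination of the approximate kernel functions). The standard a priori estimate for this reduction, phrased in the $\|\cdot\|_*$–$\|\cdot\|_{**}$ norms, gives $\|\varphi_m\|_* \le C\bigl(\|\ell_m\|_{**} + \|N_m(\varphi_m)\|_{**}\bigr)$; one then checks that $\|\ell_m\|_{**} = o(1)$ and that $\|N_m(\varphi_m)\|_{**} = o(\|\varphi_m\|_*)$, whence $\|\varphi_m\|_* = o(1)$, in particular bounded. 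Unwinding the definition \eqref{2.14} of $\|\cdot\|_*$ then yields
\[
|\varphi_m(y)| \le C\,\lambda_m^{\frac{N-2}{2}} \sum_{j=1}^m \frac{1}{(1+\lambda_m|y-x_{m,j}|)^{\frac{N-2}{2}+\tau}} \le C\sum_{j=1}^m \frac{\lambda_m^{\frac{N-2}{2}}}{1+(\lambda_m|y-x_{m,j}|)^{N-2}},
\]
where in the last inequality we used $\frac{N-2}{2}+\tau \le N-2$ (equivalently $\tau = \frac{N-4}{N-2}\le\frac{N-2}{2}$, which holds for $N\ge 4$) together with the elementary comparison between $(1+s)^{-a}$ and $(1+s^b)^{-1}$ for $0<a\le b$. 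Adding the two estimates gives the claim.

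The main obstacle is the second step: justifying the uniform $\|\cdot\|_*$ bound for $\varphi_m$. This is not contained verbatim in the statement of \textbf{Theorem A}, but it is exactly what the finite-dimensional reduction in \cite{PWY-18-JFA} produces along the way — so the honest route is to invoke those intermediate estimates (the invertibility of the linearized operator in the weighted norm, the smallness of the error $\ell_m$, and the contraction estimate on the nonlinearity). Everything else — the pointwise bound on $\hat\zeta U_{x_{m,j},\lambda_m}$ and the norm comparison for $\varphi_m$ — is elementary. One should also note the harmless change of weight between the $(r,y'')\in\R^2\times\R^{N-2}$ splitting used here and the $(\hat y,y^*)\in\R^4\times\R^{N-4}$ splitting of Remark \ref{rem-t-8-6}; since $u_m$ is the same function, only the labelling of its concentration points $x_{m,j}$ differs, and the bound is invariant under this relabelling.
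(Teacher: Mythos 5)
Your treatment of the bubble part is fine, and it is also true that the reduction in \cite{PWY-18-JFA} supplies the uniform (indeed $o(1)$) bound on $\|\varphi_m\|_*$. The genuine gap is in your very last inequality: the weighted bound $\|\varphi_m\|_*\le C$ only gives
\begin{equation*}
|\varphi_m(y)|\le C\,\lambda_m^{\frac{N-2}{2}}\sum_{j=1}^m\frac{1}{(1+\lambda_m|y-x_{m,j}|)^{\frac{N-2}{2}+\tau}},
\end{equation*}
and since $\frac{N-2}{2}+\tau<N-2$ this is a \emph{slower} decay than the one claimed in the lemma. Your elementary comparison goes the wrong way: for $0<a\le b$ one has $(1+s)^{-a}\ge c\,(1+s^{b})^{-1}$ for large $s$, not $\le$. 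So you cannot pass from the exponent $\frac{N-2}{2}+\tau$ to the exponent $N-2$ by a pointwise comparison of weights; as stated, the step is false.

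To close the gap you must improve the decay of $u_m$ through the equation itself, not through the norm alone. Since $V\ge 0$, write $u_m$ via the Green (Poisson) representation, $|u_m(y)|\le C\int_{\R^N}|y-z|^{2-N}\,u_m^{2^*-1}(z)\,dz+\cdots$, insert the initial bound with exponent $\frac{N-2}{2}+\tau$ coming from $\|\varphi_m\|_*\le C$ together with the bubble bound, and use estimates of the type of Lemmas \ref{lemb1}--\ref{lemb2} to see that each pass through the convolution strictly increases the decay exponent; after finitely many iterations the exponent saturates at $N-2$, which is the claimed bound. This bootstrap is exactly the content of the proof of Lemma 2.2 in \cite{GMPY-20-JFA}, to which the paper simply refers (it gives no argument of its own), so the honest route is either to invoke that lemma or to reproduce its iteration; your proposal as written stops one essential step short of it.
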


\begin{proof}
Since the proof is just the same as  Lemma 2.2 of \cite{GMPY-20-JFA}, here we omit it.
\end{proof}

The main result of this section is the following.
\begin{prop}\label{thm1.2}
Suppose $N\geq 5$. Assume that $V(y)$ satisfies $(V')$ and ($\tilde{V}'$).
Let $\eta\in H_s$ be a solution of $L_m\eta=0.$ Then for large $m$ there holds $\eta=0.$
\end{prop}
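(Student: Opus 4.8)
The plan is to show that any solution $\eta\in H_s$ of $L_m\eta=0$ must be orthogonal to all the natural "approximate kernel" directions of the linearized operator at a single bubble, and then to use the nondegeneracy of $U_{0,1}$ together with the assumptions $(V')$ and $(\tilde V')$ to force $\eta\equiv 0$. First I would establish an a priori decay estimate for $\eta$: using Lemma \ref{lem2.2}, the equation $L_m\eta=0$, and a standard blow-up/comparison argument against the barrier $\sum_j(1+\lambda_m|y-x_{m,j}|)^{-\frac{N-2}{2}-\tau}$, one gets $\|\eta\|_*\le C$, so WLOG $\|\eta\|_*=1$. The natural candidate kernel functions are, for each bubble $x_{m,j}$, the functions obtained by differentiating $U_{x_{m,j},\lambda_m}$ in $\lambda$, in the "radial" direction $\partial_r$, and in the $y''$-directions — after symmetrization over the group defining $H_s$ these collapse to a finite set, essentially one scaling direction and $N-3$ translation-type directions (which matches the size $(N-3)\times(N-3)$ of the matrix $A_{i,l}$ in $(\tilde V)$/$(\tilde V')$).

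The core of the argument is a sequence of local Pohozaev identities. Multiplying $L_m\eta=0$ by suitable vector fields (the generator of dilations $y\cdot\nabla(\cdot)+\frac{N-2}{2}(\cdot)$ for the scaling direction, and $\partial_{y_i}$-type fields for the translation directions) and integrating over the domain $\Omega_1$ defined in \eqref{eq-omiga}, I would derive identities of the form
\[
\int_{\Omega_1}(\text{lower order terms involving }V,\ \partial V,\ \Delta V)\,u_m^{2^*-2}\eta\;=\;(\text{boundary terms on }\partial\Omega_1).
\]
Here the unit normals $\nu_i,\nu$ appear precisely because the boundary $\partial\Omega_1$ is a cone through the origin; the boundary terms contribute the $\frac{\nu_i}{\langle\nu,x_1\rangle}$ factors seen in the definition of $A_{i,l}$. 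Plugging in the known asymptotics $\lambda_m\sim m^{\frac{N-2}{N-4}}$, $(\bar r_m,\bar y_m'')\to(r_0,y_0'')$, $V(r_0,y_0'')>0$, together with the Green-function/interaction estimates between the $m$ bubbles (section \ref{sb-add} and Appendix \ref{s5}), the principal parts of these Pohozaev identities assemble into a homogeneous linear system for the coefficients $(c_0,c_1,\dots,c_{N-3})$ of $\eta$ along the approximate kernel directions, whose coefficient matrix is exactly $(A_{i,l})_{(N-3)\times(N-3)}$ (up to an explicitly nonzero scalar and negligible $o(1)$ perturbations). Since $(\tilde V')$ says $\det(A_{i,l})\neq0$, all these coefficients vanish.

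Once $\eta$ has no component along the approximate kernel, the standard finite-dimensional reduction/invertibility machinery applies: the remaining part of $\eta$ satisfies $L_m\eta=0$ while being orthogonal to the kernel directions, and the a priori estimate then gives $\|\eta\|_*=o(1)$, contradicting $\|\eta\|_*=1$; hence $\eta\equiv0$. The main obstacle, I expect, is the careful bookkeeping in the local Pohozaev identities: one must track the boundary integrals on $\partial\Omega_1$ (which do not vanish because of the rotational symmetry cutting $\R^N$ into $m$ sectors), show that the cross-interaction terms between different bubbles are of strictly lower order than the leading $V$-dependent terms, and expand $V$ to third order near $(r_0,y_0'')$ so that the combination $\frac{\partial\Delta V}{\partial y_i}/(2\Delta V)$ genuinely emerges — this requires $V\in C^3$ near the concentration set and is the reason the hypothesis is stated that way. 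Matching the leading terms to the precise entries of $A_{i,l}$, especially the mixed $i=1$ versus $i\ge2$ cases with the $\cos\frac{2i\pi}{m}$ factors coming from the geometry of the $x_{m,j}$, is where the bulk of the technical work lies.
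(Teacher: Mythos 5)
Your overall route is the paper's route (blow-up along a sequence $m\to\infty$, decomposition of $\eta$ into the symmetrized approximate kernel plus a small remainder, local Pohozaev identities on $\Omega_1$, and invertibility of the matrix $(A_{i,l})$), but two pieces of your bookkeeping are wrong in ways that would prevent the argument from closing as written. First, you are in the wrong symmetry class: this proposition is stated under $(V')$ and $(\tilde V')$, i.e. $V=V(|y'|,y'')$ with $(y',y'')\in\R^2\times\R^{N-2}$, so after symmetrization the approximate kernel consists of the scaling direction plus $N-1$ translation-type directions ($\partial_{\bar r}$ and $\partial_{\bar y_i}$, $i=3,\dots,N$; $\partial_{y_2}$ is killed by evenness), and the relevant matrix in $(\tilde V')$ is $(N-1)\times(N-1)$ as in \eqref{eq-matrix}. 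The $(N-3)\times(N-3)$ matrix you quote belongs to $(\tilde V)$, which governs the $\R^4\times\R^{N-4}$ setting of the main theorem, not this non-degeneracy statement; moreover a ``system for $(c_0,c_1,\dots,c_{N-3})$ with an $(N-3)\times(N-3)$ coefficient matrix'' is dimensionally inconsistent, so the step ``$\det(A)\neq0$ implies all coefficients vanish'' does not parse.

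Second, the scaling coefficient is not controlled by the system you describe, and the dilation identity does not play the role you assign to it. In the paper the identities are bilinear in the pair $(u_m,\eta)$ (one tests $L_m\eta=0$ against $\partial_i u_m$ and against dilations of $u_m$, and symmetrically), producing interior terms of the form $\int_{\Omega_1}\partial_iV\,u_m\eta$ and $\int_{\Omega_1}\langle\nabla V,y-x_{m,1}\rangle u_m\eta$ rather than $u_m^{2^*-2}\eta$-weighted ones; the dilation identity \eqref{eqs2.4} is used locally to convert the boundary terms of the translation identity \eqref{eqs2.3} into interior integrals (this, together with $\langle\nu,y\rangle=0$ on $\partial\Omega_1$, is where $\nu_i/\langle\nu,x_{m,1}\rangle$ enters), and globally over $\R^N$ to provide the second family of relations \eqref{eqs2.36}. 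In the resulting system \eqref{eqs-main} the scaling coefficient appears only at order $O(1/\lambda_m)$, so $(A_{i,l})$ determines only the translation-type coefficients (in fact with the rate $O(\lambda_m^{-1-\epsilon})$); the scaling coefficient must then be recovered separately from the $O(1/\lambda_m)$ relation, which uses $\Delta V(r_0,y_0'')\neq0$. Your proposal asserts instead that the matrix system kills the scaling component directly, and it also defers the remainder estimate ($\|\eta^*\|_*\le C\lambda_m^{-1-\epsilon}$, which must be proved \emph{before} the Pohozaev step in order to extract the leading-order system) to the end. With these corrections the argument is the paper's; as stated, these are genuine gaps.
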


Now we will prove Proposition \ref{thm1.2} by an indirect method. Assume that there are $m_k\rightarrow+\infty$, satisfying $\|\eta_k\|_*=1$  and
\begin{equation}\label{2.17}
L_{m_k}\eta_k=0.
\end{equation}

Denote
\begin{equation}\label{2.18}
\widetilde{\eta}_k(y)=\lambda_{m_k}^{-\frac{N-2}{2}}\eta_k(\lambda_{m_k}^{-1}y+x_{m_k,1}).
\end{equation}

\begin{lemma}\label{lem2.3}
It holds
\begin{equation}\label{2.19}
\widetilde{\eta}_k\rightarrow b_0\psi_0+\sum\limits_{i=1,i\neq2}^Nb_i\psi_i,
\end{equation}
uniformly in $C^1(B_R(0))$ for any $R>0$, where $b_0$ and $b_i(i=1,3,\cdots,N)$ are some constants,
\begin{align*}
\psi_0=\frac{\partial U_{0,\lambda}}{\partial\lambda}\Big|_{\lambda=1},\,\,\,\,\,\,\,\psi_i=\frac{\partial U_{0,1}}{\partial y_i},\,\,i=1,3,\cdots,N.
\end{align*}
\end{lemma}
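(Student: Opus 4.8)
The plan is to analyze the rescaled functions $\widetilde{\eta}_k$ defined in \eqref{2.18} by passing to the limit in the equation $L_{m_k}\eta_k=0$. First I would establish the standard a priori bound: since $\|\eta_k\|_*=1$, the function $\eta_k$ is controlled pointwise by $\lambda_{m_k}^{\frac{N-2}{2}}\sum_j(1+\lambda_{m_k}|y-x_{m_k,j}|)^{-\frac{N-2}{2}-\tau}$, so after rescaling around $x_{m_k,1}$, the functions $\widetilde{\eta}_k$ are uniformly bounded on every compact set $B_R(0)$ by $C(1+|y|)^{-\frac{N-2}{2}-\tau}$, using that the contributions of the bubbles centered at $x_{m_k,j}$ for $j\neq 1$ become negligible on $B_R(0)$ as $m_k\to\infty$ (their mutual distance $|x_{m_k,1}-x_{m_k,j}|$ scaled by $\lambda_{m_k}$ tends to infinity by the energy estimate $\lambda_{m}\sim m^{\frac{N-2}{N-4}}$ and $|x_{m,1}-x_{m,j}|\sim \bar r/m$).

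Next I would identify the limiting equation. Rewriting \eqref{2.17} in the rescaled variable $z=\lambda_{m_k}(y-x_{m_k,1})$, the term $-\Delta\widetilde{\eta}_k$ stays, the zeroth order term $\lambda_{m_k}^{-2}V(\lambda_{m_k}^{-1}z+x_{m_k,1})\widetilde{\eta}_k\to 0$ since $V$ is bounded and $\lambda_{m_k}\to\infty$, and the potential term $(2^*-1)u_{m_k}^{2^*-2}\eta_k$ rescales: near $x_{m_k,1}$ the dominant bubble is $U_{x_{m_k,1},\lambda_{m_k}}$, whose rescaling is exactly $U_{0,1}$, while the cutoff $\hat\zeta$ equals $1$ near the concentration point and the other bubbles and the error $\varphi_{m_k}$ contribute lower order on $B_R(0)$. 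Hence $\widetilde{\eta}_k$ converges (in $C^1_{loc}$, by elliptic regularity applied to the bounded right-hand side) to a bounded solution $\widetilde\eta$ of the linearized limit equation $-\Delta\widetilde\eta=(2^*-1)U_{0,1}^{2^*-2}\widetilde\eta$ on $\R^N$. By the classical non-degeneracy of the Aubin–Talenti bubble, the bounded kernel of this operator is spanned by $\psi_0=\partial_\lambda U_{0,\lambda}|_{\lambda=1}$ and $\psi_i=\partial_{y_i}U_{0,1}$, $i=1,\dots,N$, so $\widetilde\eta=b_0\psi_0+\sum_{i=1}^N b_i\psi_i$ for constants $b_0,\dots,b_N$.

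Finally I would use the symmetry constraints coming from $\eta_k\in H_s$ to eliminate the $\psi_2$ component. The space $H_s$ enforces evenness in the $y_2$ variable: $u(y_1,-y_2,y'')=u(y_1,y_2,y'')$. Since $x_{m_k,1}=(r_{m_k},0,\bar x_{m_k}'')$ has second coordinate zero, this evenness is preserved under the rescaling about $x_{m_k,1}$, so $\widetilde\eta$ is even in $z_2$. But $\psi_2=\partial_{y_2}U_{0,1}$ is odd in $z_2$, hence $b_2=0$. This gives precisely \eqref{2.19}. I would also record that the decay $\widetilde\eta_k(y)\le C(1+|y|)^{-\frac{N-2}{2}-\tau}$ forces the limit to be genuinely in the kernel (no growing solutions appear), which is what justifies invoking the non-degeneracy result.

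The main obstacle is the careful bookkeeping in the convergence step: one must verify uniformly on $B_R(0)$ that the interaction terms from the far-away bubbles $U_{x_{m_k,j},\lambda_{m_k}}$, $j\ne 1$, and from the correction $\varphi_{m_k}$ (which is small only in the $\lambda_{m_k}^{-\frac{N-2}{2}}\|\cdot\|_{L^\infty}$ sense) really are lower order when multiplied against $u_{m_k}^{2^*-3}$ and integrated, and that the cutoff $\hat\zeta$ does not interfere near $z=0$. This is where the precise decay rates of Lemma \ref{lem2.2} and the separation estimate $\lambda_{m_k}|x_{m_k,1}-x_{m_k,j}|\to\infty$ are essential; everything else is standard blow-up analysis plus the symmetry argument.
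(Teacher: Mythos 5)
Your proposal is correct and follows essentially the same route as the paper: use the uniform bound coming from $\|\eta_k\|_*=1$ to extract a $C^1_{loc}$ limit, identify the limit equation $-\Delta\eta=(2^*-1)U_{0,1}^{2^*-2}\eta$ after rescaling (the $V$ term and the other bubbles/correction being negligible), invoke the classical non-degeneracy of the bubble to write $\eta=\sum_{i=0}^N b_i\psi_i$, and kill the $\psi_2$ component by the evenness in $y_2$ built into $H_s$. Your write-up simply makes explicit the bookkeeping that the paper compresses into a few lines.
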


\begin{proof}
Observing that $|\widetilde{\eta}_k|\leq C$, we may suppose that $\widetilde{\eta}_k\rightarrow\eta$ in $C_{loc}(\R^N)$. Then $\eta$ satisfies
$$
-\Delta\eta=(2^*-1)U^{2^*-2}\eta,\,\,\,x\in\R^N,
$$
which implies
$$
\eta=\sum\limits_{i=0}^Nb_i\psi_i.
$$
Since $\eta_k$ is even in $y_2$, there holds $b_2=0$.
\end{proof}

We decompose
\begin{align*}
\eta_k(y)=&b_{0,m}\lambda_{m_k}\sum\limits_{j=1}^{m_k}\frac{\partial Z_{x_{{m_k},j},\lambda_{m_k}}}{\partial\lambda_{m_k}}+b_{1,m}\lambda_{m_k}^{-1}\sum\limits_{j=1}^{m_k}\frac{\partial Z_{x_{{m_k},j},\lambda_{m_k}}}{\partial \overline{r}}\cr
&+\sum\limits_{i=3}^{N}b_{i,m}\lambda_{m_k}^{-1}\frac{\partial Z_{x_{{m_k},j},\lambda_{m_k}}}{\partial \bar{y}_i}+\eta_k^*,
\end{align*}
where $\eta_k^*$ satisfies
\begin{align*}
\int_{\R^N}Z_{x_{{m_k},j},\lambda_{m_k}}^{2^*-2}\frac{\partial Z_{x_{{m_k},j},\lambda_{m_k}}}{\partial\lambda_{m_k}}\eta_k^*&
=\int_{\R^N}Z_{x_{{m_k},j},\lambda_{m_k}}^{2^*-2}\frac{\partial Z_{x_{{m_k},j},\lambda_{m_k}}}{\partial \overline{r}}\eta_k^*\cr
&=\int_{\R^N}Z_{x_{{m_k},j},\lambda_{m_k}}^{2^*-2}\frac{\partial Z_{x_{{m_k},j},\lambda_{m_k}}}{\partial \bar{y}_i}\eta_k^*=0,\,\,(i=3,\cdots,N).
\end{align*}
It follows from Lemma \ref{lem2.3} that  $b_{0,m}$,$b_{1,m}$ and $b_{i,m}(i=3,\cdots,N)$ are bounded.

\begin{lemma}
There holds
$$
\|\eta_k^*\|_{*}\leq C \lambda_{m_k}^{-1-\epsilon},
$$
where $\epsilon>0$ is a small constant.
\end{lemma}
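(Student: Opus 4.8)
The plan is to estimate $\|\eta_k^*\|_*$ by using the equation $L_{m_k}\eta_k = 0$ together with the decomposition of $\eta_k$ and the linear theory for the operator $L_{m_k}$ restricted to the space orthogonal to the approximate kernel. First I would write down the equation satisfied by $\eta_k^*$: substituting the decomposition into $L_{m_k}\eta_k=0$, one gets
\[
L_{m_k}\eta_k^* = -\,b_{0,m}\lambda_{m_k}L_{m_k}\!\Big(\textstyle\sum_j \frac{\partial Z_{x_{m_k,j},\lambda_{m_k}}}{\partial\lambda_{m_k}}\Big) - b_{1,m}\lambda_{m_k}^{-1}L_{m_k}\!\Big(\textstyle\sum_j \frac{\partial Z_{x_{m_k,j},\lambda_{m_k}}}{\partial\bar r}\Big) - \textstyle\sum_{i=3}^N b_{i,m}\lambda_{m_k}^{-1}L_{m_k}\!\Big(\textstyle\sum_j \frac{\partial Z_{x_{m_k,j},\lambda_{m_k}}}{\partial\bar y_i}\Big).
\]
The point is that each of the functions $\frac{\partial Z}{\partial\lambda}$, $\frac{\partial Z}{\partial\bar r}$, $\frac{\partial Z}{\partial\bar y_i}$ is an approximate kernel element: applying $L_{m_k}$ to it produces an error that is small in the $\|\cdot\|_{**}$ norm, because these are genuine kernel elements of $-\Delta - (2^*-1)U^{2^*-2}$ and the discrepancy comes only from (i) the cut-off $\zeta$, (ii) the potential term $V\eta$, and (iii) the interaction of the $m_k$ bubbles. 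Each of these contributions can be bounded using Lemma~\ref{lem2.2} and the standard pointwise estimates on $U_{x_j,\lambda}$ and its derivatives, yielding a bound of order $\lambda_{m_k}^{-\epsilon}$ (the precise power coming from the decay rates and the bubble separation $|x_{m_k,1}-x_{m_k,j}|$, as in the energy expansion \eqref{es-I}). After multiplying by the appropriate power of $\lambda_{m_k}$ in front of each term, the right-hand side is $O(\lambda_{m_k}^{-1-\epsilon})$ in $\|\cdot\|_{**}$.

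Next I would invoke the invertibility of $L_{m_k}$ on the subspace $E_{m_k}$ defined by the orthogonality conditions that $\eta_k^*$ satisfies — this is the standard linear estimate in the finite-dimensional reduction (it holds for $m_k$ large by the non-degeneracy of the single bubble and the symmetry constraints in $H_s$), giving $\|\eta_k^*\|_* \le C\|L_{m_k}\eta_k^*\|_{**}$. Combining with the previous step yields $\|\eta_k^*\|_* \le C\lambda_{m_k}^{-1-\epsilon}$, which is exactly the claim. One subtlety is that $\eta_k^*$ need not lie exactly in the orthogonal complement with respect to the bilinear form of $L_{m_k}$; rather it satisfies orthogonality against $Z_{x_j,\lambda}^{2^*-2}\partial_* Z_{x_j,\lambda}$, so I would set up the a~priori estimate in the form used in \cite{PWY-18-JFA}, proving that $\|\eta_k^*\|_*$ is controlled by $\|L_{m_k}\eta_k^*\|_{**}$ plus the projections of $L_{m_k}\eta_k^*$ onto the approximate kernel, and then check that those projections are also $O(\lambda_{m_k}^{-1-\epsilon})$.

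The main obstacle I anticipate is the careful bookkeeping in the $\|\cdot\|_{**}$ bound for $L_{m_k}\big(\sum_j \partial_* Z_{x_{m_k,j},\lambda_{m_k}}\big)$, in particular showing that the accumulated interaction among the $m_k$ bubbles — which grows with $m_k$ — is still beaten by the decay in $\lambda_{m_k}$, using the relation $\lambda_{m_k}\sim m_k^{(N-2)/(N-4)}$ from \textbf{Theorem A}. This is the place where the weight $\tau=\frac{N-4}{N-2}$ in the $\|\cdot\|_*$, $\|\cdot\|_{**}$ norms is chosen precisely so that the sums $\sum_j (1+\lambda_{m_k}|y-x_{m_k,j}|)^{-(\frac{N-2}{2}+\tau)}$ converge uniformly and the interaction estimate closes. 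I would also need the contribution of the potential term $V\eta_k^*$ to be absorbable; since $V$ is bounded this costs a factor $\lambda_{m_k}^{-2}$ relative to the leading term, which is harmless. Once these pointwise estimates are assembled, the conclusion follows by the linear theory, and this lemma then feeds into the contradiction argument showing $b_{0,m}=b_{1,m}=b_{i,m}=0$ via the local Pohozaev identities, completing the proof of Proposition~\ref{thm1.2}.
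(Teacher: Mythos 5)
Your proposal is correct and follows essentially the same route as the paper: you write $L_{m_k}\eta_k^*$ as the error produced by applying $L_{m_k}$ to the approximate-kernel combination (whose pieces are exactly the potential term, the bubble-interaction term, and the two cut-off terms $L_1,\dots,L_4$ in the paper), bound each in the $\|\cdot\|_{**}$ norm by $C\lambda_{m_k}^{-1-\epsilon}$, and then conclude via the coercivity estimate $\|L_{m_k}\eta_k^*\|_{**}\ge \rho\|\eta_k^*\|_{*}$ valid under the orthogonality conditions. The only difference is presentational (your extra care about projections onto the approximate kernel is not needed since $\eta_k^*$ itself satisfies the orthogonality constraints), so no further comment is required.
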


\begin{proof}
One can see easily that
\begin{align*}
&L_{m_k}\eta_k^*\\
&=-\Delta\eta_k^*+V \eta_k^*-(2^*-1)u_{m_k}^{2^*-2}\eta_k^*\cr
&=-V \Big(b_{0,m}\lambda_{m_k}\sum\limits_{j=1}^{m_k}\frac{\partial Z_{x_{{m_k},j},\lambda_{m_k}}}{\partial\lambda_{m_k}}+\frac{b_{1,m}}{\lambda_{m_k}}\frac{\partial Z_{x_{{m_k},j},\lambda_{m_k}}}{\partial \overline{r}}
+\sum\limits_{i=3}^N\frac{b_{i,m}}{\lambda_{m_k}}\sum\limits_{j=1}^{m_k}\frac{\partial Z_{x_{{m_k},j},\lambda_{m_k}}}{\partial \bar{y}_i}\Big)\cr
&\quad+(2^*-1)\sum\limits_{j=1}^{m_k}(u_{m_k}^{2^*-2}-U_{x_{{m_k},j},\lambda_{m_k}}^{2^*-2})
\Big(b_{0,m}\lambda_{m_k}\frac{\partial Z_{x_{{m_k},j},\lambda_{m_k}}}{\partial\lambda_{m_k}}\cr
&
\quad\quad\quad
+\frac{b_{1,m}}{\lambda_{m_k}}\frac{\partial Z_{x_{{m_k},j},\lambda_{m_k}}}{\partial r}+\sum\limits_{i=3}^N\frac{b_{i,m}}{\lambda_{m_k}}\frac{\partial Z_{x_{{m_k},j},\lambda_{m_k}}}{\partial \bar{y}_i}\Big)\cr
&\quad
+2\nabla \xi\Big(b_{0,m}\lambda_{m_k}\sum\limits_{j=1}^{m_k}\nabla\frac{\partial U_{x_{{m_k},j},\lambda_{m_k}}}{\partial \lambda_{m_k}}
+\frac{b_{1,m}}{\lambda_{m_k}}\sum\limits_{j=1}^{m_k}\nabla\frac{\partial U_{x_{{m_k},j},\lambda_{m_k}}}{\partial r}+\sum\limits_{i=3}^N\frac{b_{i,m}}{\lambda_{m_k}}\sum\limits_{j=1}^{m_k}\nabla\frac{\partial U_{x_{{m_k},j},\lambda_{m_k}}}{\partial \bar{y}_i}\Big)\cr
&\quad
+\Delta\xi \Big(b_{0,m}\lambda_{m_k}\sum\limits_{j=1}^{m_k}\frac{\partial U_{x_{{m_k},j},\lambda_{m_k}}}{\partial \lambda_{m_k}}
+\frac{b_{1,m}}{\lambda_{m_k}}\sum\limits_{j=1}^{m_k}\frac{\partial U_{x_{{m_k},j},\lambda_{m_k}}}{\partial r}+\sum\limits_{i=3}^N\frac{b_{i,m}}{\lambda_{m_k}}\sum\limits_{j=1}^{m_k}\frac{\partial U_{x_{{m_k},j},\lambda_{m_k}}}{\partial \bar{y}_i}\Big)\\
:&=L_{1}+L_{2}+L_{3}+L_{4}.
\end{align*}
Similar to the proof of $J_{1}$  in Lemma 2.4 in \cite{PWY-18-JFA}, we can prove
\begin{align}\label{eqs2.16}
\bigl\|L_{1}\bigr\|_{**}\leq C\lambda_{m_k}^{-1-\epsilon}.
\end{align}
By the same argument as that of \cite{GMPY-20-JFA} in Lemma 2.4, we can estimate
\begin{align}\label{eqs2.17}
&\bigl\|L_{2}\bigr\|_{**}\leq C\lambda_{m_k}^{-1-\epsilon}.
\end{align}
Similar to  $J_{2}$ of Lemma 2.4 in \cite{PWY-18-JFA}, we can check
\begin{align*}
|L_3|
&\leq C\Big(\frac{1}{\lambda_{m_k}}\Big)^{1+\epsilon}
\sum\limits_{j=1}^{m_k}\frac{\lambda_{m_k}^{\frac{N+2}{2}}}{(1+\lambda_{m_k}|y-x_{{m_k},j}|)^{\frac{N+2}{2}+\tau}}.
\end{align*}
Hence, we obtain
\begin{align}\label{eqs2.18}
\|L_3\|_{**}\leq C\Big(\frac{1}{\lambda_{m_k}}\Big)^{1+\epsilon}.
\end{align}
Also, similar to $J_3$ of Lemma 2.4 in \cite{PWY-18-JFA}, we can prove
\begin{align*}
|L_4|
\leq &C\Big(\frac{1}{\lambda_{m_k}}\Big)^{1+\epsilon}
\sum\limits_{j=1}^{m_k}\frac{\lambda_{m_k}^{\frac{N+2}{2}}}{(1+\lambda_{m_k}|y-x_{{m_k},j}|)^{\frac{N+2}{2}+\tau}},
\end{align*}
which yields that
\begin{align}\label{eqs2.19}
\|L_4\|_{**}\leq C\Big(\frac{1}{\lambda_{m_k}}\Big)^{1+\epsilon}.
\end{align}
It follows from \eqref{eqs2.16} to \eqref{eqs2.19} that
\begin{align}\label{w2.19}
\|L_{m_k}\eta_k^*\|_{**}\leq C\lambda_{m_k}^{-1-\epsilon}.
\end{align}
Furthermore, from
\begin{align*}
\int_{\R^N}Z_{x_{{m_k},j},\lambda_{m_k}}^{2^*-2}\frac{\partial Z_{x_{{m_k},j},\lambda_{m_k}}}{\partial\lambda_{m_k}}\eta_k^*
&=\int_{\R^N}Z_{x_{{m_k},j},\lambda_{m_k}}^{2^*-2}\frac{\partial Z_{x_{{m_k},j},\lambda_{m_k}}}{\partial \overline{r}}\eta_k^*\cr
&=\int_{\R^N}Z_{x_{{m_k},j},\mu_{m_k}}^{2^*-2}\frac{\partial Z_{x_{{m_k},j},\lambda_{m_k}}}{\partial \bar{y}_i}\eta_k^*
=0,\,\,(i=3,\cdots,N)
\end{align*}
and Lemma \ref{lem2.2}, we can prove that there exists $\rho>0$ such that
\begin{align}\label{ww2.19}
\|L_{m_k}\eta_k^*\|_{**}\geq \rho\|\eta_k^*\|_{*}.
\end{align}
Combining \eqref{w2.19} and \eqref{ww2.19}, the result is true.
\end{proof}

Now we give another assumption of $V(y):$

 ($\tilde{V}'$)
$$
 det(A_{i,l})_{(N-1)\times (N-1)}\neq 0,\,\,i,l=1,2,...,N-1,
$$
where
\begin{equation}\label{eq-matrix}
A_{i,l}=
  \left\{
    \begin{array}{ll}
      \Big[ \frac{\partial^{2}V}{\partial r^{2}}-\big(\frac{\frac{\partial \Delta V}{\partial y_{1}}}{2\Delta V}
+\frac{\nu_{1}}{\langle \nu,x_{1}\rangle}\big)\big(r\frac{\partial^{2}V}{\partial r^{2}}
+\sum\limits_{j=3}^{N}y_{j}\frac{\partial^{2}V}{\partial r\partial y_{j}}\big)\Big](r_{0},y''_{0}),
 \text{when}\,\,i=l=1;
\vspace{0.12cm}\\
\Big[ \frac{\partial^{2}V}{\partial r\partial y_{l+1}}-\big(\frac{\frac{\partial \Delta V}{\partial y_{1}}}{2\Delta V}
+\frac{\nu_{1}}{\langle \nu,x_{1}\rangle}\big)\big(r\frac{\partial^{2}V}{\partial r\partial y_{l+1}}
+\sum\limits_{j=3}^{N}y_{j}\frac{\partial^{2}V}{\partial y_{j}\partial y_{l+1}}\big)\Big](r_{0},y''_{0}),
\\
\text{when}\,\,i=1,
l=2,3,...,N-1;
\vspace{0.12cm}\\
\cos\frac{2i\pi}{m}\Big[ \frac{\partial^{2}V}{\partial r\partial y_{i+1}}-\big(\frac{\frac{\partial \Delta V}{\partial y_{i+1}}}{2\Delta V}
+\frac{\nu_{i+1}}{\langle \nu,x_{1}\rangle}\big)\big(r\frac{\partial^{2}V}{\partial r^{2}}
+\sum\limits_{j=3}^{N}y_{j}\frac{\partial^{2}V}{\partial r\partial y_{j}}\big)
\Big](r_{0},y''_{0}), \\
 \text{when}\,\,i=2,3,...,N-1,l=1;
\vspace{0.12cm}\\
     \Big[\frac{\partial^{2}V}{\partial y_{i+1}\partial y_{l+1}}-
\big(\frac{\frac{\partial \Delta V}{\partial y_{i+1}}}{2\Delta V}
+\frac{\nu_{i+1}}{\langle \nu,x_{1}\rangle}\big)\big(r\frac{\partial^{2}V}{\partial r\partial y_{l+1}}
+\sum\limits_{j=3}^{N}y_{j}\frac{\partial^{2}V}{\partial y_{j}\partial y_{l+1}}\big)
\Big](r_{0},y''_{0}),
\\
\text{when}\,\,i,l=2,3,...,N-1,
    \end{array}
  \right.
\end{equation}
$\nu_{i}$ and $\nu$ are the $i$-th unit outward normal and unit outward normal respectively on $\Omega_{1}$ defined in \eqref{eq-omiga}.

\begin{lemma}
If  $(\tilde{V'})$ holds, then
$$
\tilde{\eta}_k\rightarrow0
$$
uniformly in $C^1(B_R(0))$ for any $R>0$.
\end{lemma}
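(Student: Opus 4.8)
The goal is to show that all the constants $b_0,b_1,b_3,\dots,b_N$ in the limit \eqref{2.19} vanish; granting this, the conclusion $\tilde{\eta}_k\to 0$ in $C^1(B_R(0))$ for every $R>0$ follows at once from Lemma \ref{lem2.3}. So the real task is to produce $N$ independent scalar relations among the $b$'s, and the natural device is a family of \emph{local Pohozaev identities} for the pair $(u_{m_k},\eta_k)$ on the sector $\Omega_1$ of \eqref{eq-omiga}, one identity for each generator of the bubble manifold of $u_{m_k}$, in the spirit of \textbf{Theorem A} and \cite{PWY-18-JFA,PWW-19-JDE}.

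Concretely, for each generator $\mathcal{D}u_{m_k}\in\{\lambda_{m_k}\partial_{\lambda_{m_k}}u_{m_k},\ \partial_{\bar r}u_{m_k},\ \partial_{y_3}u_{m_k},\dots,\partial_{y_N}u_{m_k}\}$ I would multiply the equation $L_{m_k}\eta_k=0$ by $\mathcal{D}u_{m_k}$, integrate over $\Omega_1$, and integrate by parts using $-\Delta u_{m_k}+Vu_{m_k}=u_{m_k}^{2^*-1}$. Each pairing then becomes an identity of the schematic form
\[
\int_{\Omega_1}(\partial_{\ast}V)\,u_{m_k}\eta_k\,dy+\int_{\partial\Omega_1}\mathcal{B}(u_{m_k},\eta_k)\,dS=0,
\]
where $\partial_{\ast}V$ is the appropriate first derivative of $V$ (for the dilation, the combination $V+\tfrac12\langle y-x_{m_k,1},\nabla V\rangle$) and $\mathcal{B}$ is a bilinear boundary form in $u_{m_k},\eta_k$ and their gradients; these are the identities collected in Appendix \ref{sa}.

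The core of the argument is the asymptotics of these identities as $k\to\infty$. On the inner region $|y-x_{m_k,1}|\le R\lambda_{m_k}^{-1}$ I would use the rescaling \eqref{2.18}, the $C^1_{loc}$ convergence \eqref{2.19} and the approximation $u_{m_k}\approx U_{x_{m_k,1},\lambda_{m_k}}$; on the complement, and in particular on $\partial\Omega_1$ (which by \textbf{Theorem A} and $\lambda_{m_k}\sim m_k^{(N-2)/(N-4)}$ lies at rescaled distance $\to\infty$ from the first bubble), I would bound everything by Lemma \ref{lem2.2} and the norm \eqref{2.14} together with the smallness $\|\eta_k^*\|_*\le C\lambda_{m_k}^{-1-\epsilon}$. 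Dividing each identity by its natural order and letting $k\to\infty$, the dilation identity is treated first: since $\int_{\R^N}U_{0,1}\psi_i=0$ for $i=1,\dots,N$ whereas $\int_{\R^N}U_{0,1}\psi_0\neq0$, its limit reduces at leading order to a nonzero multiple of $b_0$, forcing $b_0=0$. With $b_0=0$ in hand, the remaining $N-1$ identities become, after rescaling, a homogeneous linear system $\sum_{l=1}^{N-1}A_{i,l}\hat{b}_l=0$ ($i=1,\dots,N-1$) in the unknowns $\hat{b}_1=b_1$, $\hat{b}_l=b_{l+1}$ ($l\ge2$), whose coefficient matrix is exactly $(A_{i,l})$ of $(\tilde{V'})$ in \eqref{eq-matrix}: the second-order entries $\partial^2V/\partial r^2$, $\partial^2V/\partial r\partial y_j$, $\partial^2V/\partial y_i\partial y_j$ arise from the quadratic Taylor term of $V$ at $(r_0,y_0'')$ integrated against radially symmetric weights (which also produces the $\partial\Delta V/\partial y_i$ and $\Delta V$ factors), while the $\nu_i/\langle\nu,x_1\rangle$ factors come from the boundary integral $\int_{\partial\Omega_1}\mathcal{B}$; these boundary contributions enter at the same order as the bulk second-order term precisely because of the relation between $\lambda_{m_k}$, $\bar r_{m_k}$ and $m_k$ in \textbf{Theorem A}. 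Since $\det(A_{i,l})\neq0$ by $(\tilde{V'})$, the system yields $b_1=b_3=\dots=b_N=0$, and hence $\tilde{\eta}_k\to0$.

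The main obstacle is exactly this limiting analysis. One must Taylor-expand $V$ to second order around $(r_0,y_0'')$, use the cancellation produced by $(r_0,y_0'')$ being a critical point of $r^2V$ (so that at the critical order only the $b_0$-term, and afterwards only the matrix combination, survives), and — the delicate point — verify that the boundary integrals over $\partial\Omega_1$ contribute exactly the finite combinations recorded in $(A_{i,l})$, with every remaining error genuinely of lower order once each identity is divided by its common scale. Identifying the correct leading order of each piece, bulk versus boundary, and controlling all remainders is where the bulk of the work lies; it relies on the sharp decay estimates for $u_{m_k}$ and $\eta_k$ (Lemma \ref{lem2.2} and the consequences of the decomposition of $\eta_k$) and on the geometry of the sectors $\Omega_j$.
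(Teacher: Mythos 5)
Your high-level plan — local Pohozaev identities on $\Omega_1$, Taylor expansion of $V$ around $x_{m_k,1}$, and a nonsingular linear system in the constants $b_0,b_1,b_3,\dots,b_N$ — is indeed the paper's strategy. But the specific order in which you propose to solve that system does not match the paper and does not appear to work, for two linked reasons.

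First, you want to extract $b_0=0$ \emph{first} from the dilation Pohozaev identity, claiming that "its limit reduces at leading order to a nonzero multiple of $b_0$." This is exactly where the critical-point hypothesis on $r^2V$ works against you. The dilation identity on $\R^N$ reads $\int(2V+\langle\nabla V,y\rangle)u_{m_k}\eta_k=0$, and, at the bubble scale, the coefficient multiplying the $b_0\int U\psi_0$ contribution is $2V(x_{m_k,1})+\langle\nabla V(x_{m_k,1}),x_{m_k,1}\rangle=\frac1{r}\partial_r(r^2V)(x_{m_k,1})$, which tends to zero precisely because $(r_0,y_0'')$ is a critical point of $r^2V$. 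Consequently the $b_0$-term in the expanded dilation relation comes with an extra $1/\lambda_{m_k}$ factor — compare \eqref{eqs2.36}, where the $b_{0,k}$ coefficient is $\frac{\Delta V}{N\lambda_{m_k}}\int U\psi_0|y|^2$ while the $b_{1,k},b_{l,k}$ coefficients are $O(1)$ — so it enters one order lower than $b_1,\dots,b_N$ and cannot be isolated first. (You mention the critical-point cancellation, but draw the opposite conclusion, namely that "at the critical order only the $b_0$-term survives"; it is the $b_0$-term that gets suppressed.) The same problem afflicts the version of the dilation identity on $\Omega_1$, where in addition the boundary integrals are not lower order and carry all the $b$'s.

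Second, as a consequence, the coefficient matrix you would obtain "after setting $b_0=0$" is not $(A_{i,l})$ of $(\tilde V')$. The entries of $(A_{i,l})$ contain the factor $\frac{\partial\Delta V/\partial y_i}{2\Delta V}+\frac{\nu_i}{\langle\nu,x_1\rangle}$ precisely because the paper \emph{eliminates} the $O(b_{0,k}/\lambda_{m_k})$ terms between the translation-type relations \eqref{eqs2.30} and the dilation relation \eqref{eqs2.36} by a linear combination; if $b_0$ were zero a priori, those $\Delta V$-dependent corrections would never appear and you would be led to a different (and different-looking) nondegeneracy hypothesis. The paper's actual chain is: obtain \eqref{eqs2.30} by combining the translation Pohozaev \eqref{eqs2.3} with the dilation Pohozaev \eqref{eqs2.4} on $\Omega_1$ (the latter used solely to replace the boundary terms by bulk terms, using $\langle\nu,y\rangle=0$ on $\partial\Omega_1$); obtain \eqref{eqs2.36} from the dilation Pohozaev on all of $\R^N$; eliminate $b_{0,k}$ between them to arrive at the $(N-1)\times(N-1)$ system \eqref{eqs-main} with matrix $(A_{i,l})$; invoke $\det(A_{i,l})\ne0$ to get $b_{1,k},b_{l,k}=o(1)$, and only afterwards recover $b_{0,k}=o(1)$ by substituting back into \eqref{eqs2.36}. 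You would need to reorganize your argument along these lines — in particular, postpone $b_0$ — for the proposal to go through.
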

\begin{proof}
Step1. Recall that
\begin{align*}
\Omega_j=\Bigl\{y=(y',y'')\in \R^2\times\R^{N-2}: \langle\frac{y{'}}{|y{'}|},\frac{x_{m,j}'}{|x_{m,j}'|}\rangle\geq \cos\frac{\pi}{m}\Bigr\}.
\end{align*}
In order to prove $b_{i,k}\rightarrow0(i=1,3,\cdots,N)$, we apply the identities in Lemma
\ref{lem2.1} in the domain $\Omega_1,$
\begin{align}\label{2.5.2}
&-\int_{\partial\Omega_1}\frac{\partial u_{m_k}}{\partial\nu}\frac{\partial \eta_k}{\partial y_i}-\int_{\partial\Omega_1}\frac{\partial \eta_k}{\partial\nu}\frac{\partial u_{m_k}}{\partial y_i}+\int_{\partial\Omega_1}\langle\nabla u_{m_k},\nabla\eta_k\rangle\nu_i
+\int_{\partial\Omega_1}Vu_{m_k}\eta_k\nu_i\cr
&\,\,\,-\int_{\partial\Omega_1}u_{m_k}^{2^*-1}\eta_k\nu_i
=\int_{\Omega_1}\frac{\partial V}{\partial y_i}u_{m_k}\eta_k,\,\,\,i=1,3,\cdots,N.
\end{align}
By the symmetry, we have $\frac{\partial u_{m_k}}{\partial\nu}=0$ and $\frac{\partial\eta_k}{\partial\nu}=0$ on $\partial\Omega_1.$
Hence
\begin{align}\label{2.5.3}
&\hbox{the left hand side of}~ \eqref{2.5.2}\cr
&=\nu_{i}\Big(\int_{\partial\Omega_1}\langle\nabla u_{m_k},\nabla \eta_k\rangle+\int_{\partial\Omega_1}Vu_{m_k}\eta_k-\int_{\partial\Omega_1}u_{m_k}^{2^*-1}\eta_k\Big).
\end{align}

Combining \eqref{2.5.2} and \eqref{2.5.3}, we obtain
\begin{align}\label{2.5.4}
\nu_{i}\Big(\int_{\partial\Omega_1}\langle\nabla u_{m_k},\nabla \eta_k\rangle+\int_{\partial\Omega_1}Vu_{m_k}\eta_k-\int_{\partial\Omega_1}u_{m_k}^{2^*-1}\eta_k\Big)
=\int_{\Omega_1}\frac{\partial V}{\partial y_i}u_{m_k}\eta_k.
\end{align}
To estimate the left hand side in \eqref{2.5.4}, we use \eqref{eqs2.4} in $\Omega_1.$ Applying the symmetry, we have
\begin{align}\label{2.5.5}
&\int_{\Omega_1}u_{m_k}\eta_k\langle\nabla V,y-x_{m_k,1}\rangle+2\int_{\Omega_1}V\eta_ku_{m_k}
=-\int_{\partial\Omega_1}u_{m_k}^{2^*-1}\eta_k\langle\nu,y-x_{m_k,1}\rangle\cr
&+\int_{\partial\Omega_1}\langle\nabla u_{m_k},\nabla\eta_k\rangle\langle\nu,y-x_{m_k,1}\rangle+\int_{\partial\Omega_1}V u_{m_k}\eta_k\langle\nu,y-x_{m_k,1}\rangle.
\end{align}
On $\partial\Omega_1,$ it holds $\langle\nu,y\rangle=0.$
Then, \eqref{2.5.5} becomes
\begin{align}\label{eqs2.24}
&\int_{\Omega_1}u_{m_k}\eta_k\langle\nabla V,y-x_{m_k,1}\rangle+2\int_{\Omega_1}V \eta_ku_{m_k}\cr
=&-\langle\nu,x_{m_k,1}\rangle\Big(-\int_{\partial\Omega_1}u_{m_k}^{2^*-1}\eta_k+\int_{\partial\Omega_1}\langle\nabla u_{m_k},\nabla\eta_k\rangle+\int_{\partial\Omega_1}V u_{m_k}\eta_k\Big).
\end{align}
It follows from \eqref{2.5.4} and \eqref{eqs2.24} that
\begin{align}\label{eqs2.26}
\int_{\Omega_1}\frac{\partial V}{\partial y_i}u_{m_k}\eta_k=\frac{-\nu_i}{\langle \nu,x_{m_k,1}\rangle}\Big(\int_{\Omega_1}u_{m_k}\eta_k\langle\nabla V,y-x_{m_k,1}\rangle+2\int_{\Omega_1}V \eta_ku_{m_k}\Big).
\end{align}
Since $\nabla V(x_{m_k,1})=O(|x_{m_k,1}-y_0|)$ and
\begin{align}\label{eqs2.27}
\int_{\Omega_1}u_{m_k}\eta_k&=\int_{(\Omega_1)_{x_{m_k,1},\lambda_{m_k}}}
\big(\lambda_{m_k}^{-\frac{N-2}{2}}u_{m_k}(\lambda_{m_k}^{-1}y+x_{m_k,1})\big)\tilde{\eta}_k\cr
&=\frac{1}{\lambda_{m_k}^2}\int_{\R^N}U\big[b_{0,m}\psi_0+b_{1,m}\psi_1+\sum\limits_{i=3}^Nb_{i,m}\psi_i
+\lambda_{m_k}^{-\frac{N-2}{2}}\eta_k^*(\lambda_{m_k}^{-1}y+x_{m_k,1})\big]+O\big(\lambda_{m_k}^{-3-\epsilon}\big)\cr
&=O\big(\lambda_{m_k}^{-3-\epsilon}\big),
\end{align}
where $(\Omega_1)_{x_{k_m},1}=\{y, \lambda_{k_m}^{-1}y+x_{k_m,1}\in \Omega_1\},$ we have
\begin{align}\label{eqs2.28}
 \int_{\Omega_1}\frac{\partial V}{\partial y_i}u_{m_k}\eta_k=&
 \int_{\Omega_1}u_{m_k}\eta_k\Big(\frac{\partial V}{\partial y_i}-\frac{\partial V(x_{m_k,1})}{\partial y_i}\Big)
 +\int_{\Omega_1}\frac{\partial V(x_{m_k,1})}{\partial y_i}u_{m_k}\eta_k\cr
 =&\int_{\Omega_1}u_{m_k}\eta_k\Big[\langle\nabla \frac{\partial V(x_{m_k,1})}{\partial y_i},y-x_{m_k,1}\rangle\cr
 &+\frac{1}{2}\langle\nabla^2 \frac{\partial V(x_{m_k,1})}{\partial y_i}(y-x_{m_k,1}),y-x_{m_k,1}\rangle+O(|y-x_{m_k,1}|^3)\Big]+O(\lambda_{m_k}^{-4-\epsilon})\cr
 =&\frac{1}{\lambda_{m_k}^2}\int_{\R^N}U\Big(b_{0,k}\psi_0+b_{1,k}\psi_1+
 \sum\limits_{l=3}^Nb_{l,k}\psi_l\Big)\Big(\langle\nabla\frac{\partial V(x_{m_k,1})}{\partial y_i},\frac{y}{\lambda_{m_k}}\rangle\cr
 &+\frac{1}{2}\langle\nabla^2\frac{\partial V(x_{m_k,1})}{\partial y_i}\frac{y}{\lambda_{m_k}},\frac{y}{\lambda_{m_k}}\rangle\Big)+O(\lambda_{m_k}^{-4-\epsilon})\cr
 =&\frac{\frac{\partial^{2} V}{\partial y_{1}\partial y_{i}}(x_{m_k,1})}{\lambda_{m_k}^3}b_{1,k}\int_{\R^N}U \psi_1y_1+\sum\limits_{l=3}^Nb_{l,k}\frac{\frac{\partial^{2} V}{\partial y_{l}\partial y_{i}}(x_{m_k,1})}{\lambda_{m_k}^3}\int_{\R^N}U\psi_ly_l\cr
 &+\frac{\frac{\partial\triangle V(x_{m_k,1})}{\partial y_i}b_{0,k}}{2N\lambda_{m_k}^4}\int_{\R^N}U\psi_0|y|^2
 +O\big(\lambda_{m_k}^{-4-\epsilon}\big).
\end{align}
Moreover, we can estimate
\begin{align}\label{eqs2.29}
&\int_{\Omega_1}u_{m_k}\eta_k\langle\nabla V,y-x_{m_k,1}\rangle\cr
&=\int_{\Omega_1}u_{m_k}\eta_k\langle\nabla V(y)-\nabla V(x_{m_k,1}),y-x_{m_k,1}\rangle+\int_{\Omega_1}u_{m_k}\eta_k\langle\nabla V(x_{m_k,1}),y-x_{m_k,1}\rangle\cr
&=\int_{\Omega_1}u_{m_k}\eta_k\langle\nabla V(y)-\nabla V(k_{m_k,1}),y-x_{m_k,1}\rangle+O(\lambda_{m_k}^{-4-\epsilon})\cr
&=\int_{\Omega_1}u_{m_k}\eta_k\langle\nabla^2V(x_{m_k,1})(y-x_{m_k,1}),y-x_{m_k,1}\rangle
+O(\lambda_{m_k}^{-4-\epsilon})\cr
&=\frac{1}{\lambda_{m_k}^2}\int_{\R^N}U\Big(b_{0,k}\psi_0+b_{1,k}\psi_1+\sum\limits_{l=3}^Nb_{l,k}\psi_l\Big)
\big\langle\nabla^2V(x_{m_k,1})\lambda_{m_k}^{-1}y,\lambda_{m_k}^{-1}y\big\rangle
+O(\lambda_{m_k}^{-4-\epsilon})\cr
&=\frac{b_{0,k}\Delta V(x_{m_k,1})}{N\lambda_{m_k}^4}\int_{\R^N}U\psi_0|y|^2+O\big(\lambda_{m_k}^{-4-\epsilon}\big).
\end{align}
Hence, \eqref{eqs2.28} and \eqref{eqs2.29} give
\begin{align}\label{eqs2.30}
&b_{0,k}\frac{1}{\lambda_{m_{k}}}\Big(\frac{\frac{\partial\Delta V}{\partial y_{i}}(x_{m_k,1})}{2N}
+\frac{\nu_{i}}{\langle \nu,x_{m_{k},1}\rangle}\frac{\Delta V(x_{m_k,1})}{N}\Big)\ds\int_{\R^N}U\psi_0|y|^2
\cr
&+b_{1,k}\frac{\partial^{2}V}{\partial y_{1}\partial y_{i}}(x_{m_k,1})\int_{\R^N}U \psi_1y_1
+\sum\limits_{l=3}^Nb_{l,k}\frac{\partial^{2}V}{\partial y_{l}\partial y_{i}}(x_{m_k,1})\int_{\R^N}U\psi_ly_l
=O\big(\lambda_{m_k}^{-1-\epsilon}\big).
\end{align}

Step 2.  Next, we apply \eqref{eqs2.4} to get
\begin{align*}
 \int_{\R^{N}}u_{m_k}\eta_k\langle\nabla V(y),y\rangle=0,
\end{align*}
which implies
\begin{equation}\label{888}
\ds\int_{\Omega_{i}}u_{m_k}\eta_k\langle\nabla V(y),y\rangle=0.
\end{equation}

On the other hand, proceeding as in the proof of \eqref{eqs2.27}, we have
\begin{align*}
&\int_{\Omega_1}u_{m_k}\eta_k\langle\nabla V(x_{m_k,1}),y\rangle\cr
&=\int_{\Omega_1}u_{m_k}\eta_k\langle\nabla V(x_{m_k,1}),y-x_{m_k,1}\rangle+\int_{\Omega_1}u_{m_k}\eta_k\langle\nabla V(x_{m_k,1}),x_{m_k,1}\rangle\cr
&=O\big(\lambda_{m_k}^{-4-\epsilon}\big).
\end{align*}
Therefore, from \eqref{eqs2.30}, we have
\begin{align*}
&\int_{\Omega_1}u_{m_k}\eta_k\langle\nabla V(y),y\rangle
=\int_{\Omega_1}u_{m_k}\eta_k\langle\nabla V(y)-\nabla V(x_{m_k,1}),y\rangle+O\big(\lambda_{m_k}^{-4-\epsilon}\big)\cr
=&\int_{\Omega_1}u_{m_k}\eta_k\langle\nabla^2V(x_{m_k,1})(y-x_{m_k,1}),y\rangle+O\big(\lambda_{m_k}^{-4-\epsilon}\big)\cr
=&\frac{1}{\lambda_{m_k}^2}\int_{\R^N}U \Big(b_{0,k}\psi_0+b_{1,k}\psi_1+\sum\limits_{l=3}^Nb_{l,k}\psi_l\Big)\langle\nabla^2V(x_{m_k,1})\lambda_{m_k}^{-1}y, \lambda_{m_k}^{-1}y+x_{m_k,1}\rangle+O\big(\lambda_{m_k}^{-4-\epsilon}\big)\cr
=&\frac{b_{0,k}\Delta V(x_{m_k,1})}{N\lambda_{m_k}^4}\int_{\R^N}U \psi_0|y|^2
+
\frac{b_{1,k}}{\lambda_{m_{k}}^3}\sum_{j=1}^{N}(x_{m_k,1})_{j}\frac{\partial^{2} V}{\partial y_{j}\partial y_{1}}(x_{m_k,1})
\int_{\R^N}U\psi_1y_{1}
\\
&+\frac{\sum\limits_{l=3}^Nb_{l,k}\ds\sum_{j=1}^{N}(x_{m_k,1})_{j}
\frac{\partial^{2} V}{\partial y_{j}\partial y_{l}}(x_{m_k,1})}{\lambda_{m_k}^3}
\int_{\R^N}U\psi_ly_{l}
+O\big(\lambda_{m_k}^{-4-\epsilon}\big),
\end{align*}
which combining with \eqref{888} implies that
\begin{align}\label{eqs2.36}
&b_{0,k}\frac{\Delta V(x_{m_k,1})}{N\lambda_{m_{k}}}\int_{\R^N}U \psi_0|y|^2
+
b_{1,k}\sum_{j=1}^{N}(x_{m_k,1})_{j}\frac{\partial^{2}V}{\partial y_{j}\partial y_{1}}(x_{m_k,1})\int_{\R^N}U \psi_1y_1
\cr&
+\sum\limits_{l=3}^Nb_{l,k}\sum_{j=1}^{N}(x_{m_k,1})_{j}\frac{\partial^{2}V}{\partial y_{j}\partial y_{l}}(x_{m_k,1})\int_{\R^N}U\psi_ly_l
=O\big(\lambda_{m_k}^{-1-\epsilon}\big).
\end{align}
It follows from \eqref{eqs2.30} and \eqref{eqs2.36} that
\begin{align*}\label{eqs2.37}
&b_{1,k}
\Big[\frac{\partial^{2}V}{\partial y_{1}\partial y_{i}}(x_{m_k,1})-
\big(\frac{\frac{\partial \Delta V}{\partial y_{i}}(x_{m_k,1})}{2\Delta V(x_{m_k,1})}
+\frac{\nu_{i}}{\langle \nu,x_{m_{k},1}\rangle}\big)
\sum_{j=1}^{N}(x_{m_k,1})_{j}\frac{\partial^{2}V}{\partial y_{j}\partial y_{1}}(x_{m_k,1})
\Big]\int_{\R^N}U \psi_1y_1
\cr
&+\sum\limits_{l=3}^Nb_{l,k}
\Big[
\frac{\partial^{2}V}{\partial y_{l}\partial y_{i}}(x_{m_k,1})
-\big(\frac{\frac{\partial \Delta V}{\partial y_{i}}(x_{m_k,1})}{2\Delta V(x_{m_k,1})}
+\frac{\nu_{i}}{\langle \nu,x_{m_{k},1}\rangle}\big)
\sum_{j=1}^{N}(x_{m_k,1})_{j}\frac{\partial^{2}V}{\partial y_{j}\partial y_{l}}(x_{m_k,1})
\Big]
\int_{\R^N}U\psi_ly_l
\cr&
=O\big(\lambda_{m_k}^{-1-\epsilon}\big),\,\,i=1,3,4,...,N,
\end{align*}
which implies that
\begin{equation}\label{eqs-main}
  \left\{
    \begin{array}{ll}
      \Big[ \frac{\partial^{2}V}{\partial r^{2}}-\big(\frac{\frac{\partial \Delta V}{\partial y_{1}}}{2\Delta V}
+\frac{\nu_{1}}{\langle \nu,x_{1}\rangle}\big)\big(r\frac{\partial^{2}V}{\partial r^{2}}
+\sum\limits_{j=3}^{N}y_{j}\frac{\partial^{2}V}{\partial y_{r}\partial y_{j}}\big)\Big](r_{0},y''_{0})b_{1,k}
\\
+\sum\limits_{l=2}^{N-1}\Big[ \frac{\partial^{2}V}{\partial r\partial y_{l+1}}-\big(\frac{\frac{\partial \Delta V}{\partial y_{1}}}{2\Delta V}
+\frac{\nu_{1}}{\langle \nu,x_{1}\rangle}\big)\big(r\frac{\partial^{2}V}{\partial r\partial y_{l+1}}
+\sum\limits_{j=3}^{N}y_{j}\frac{\partial^{2}V}{\partial y_{j}\partial y_{l+1}}\big)\Big](r_{0},y''_{0})b_{l+1,k}=O\big(\lambda_{m_k}^{-1-\epsilon}\big)
\vspace{0.12cm}\\
\cos\frac{4\pi}{m}\Big[ \frac{\partial^{2}V}{\partial r\partial y_{3}}-\big(\frac{\frac{\partial \Delta V}{\partial y_{3}}}{2\Delta V}
+\frac{\nu_{3}}{\langle \nu,x_{1}\rangle}\big)\big(r\frac{\partial^{2}V}{\partial r^{2}}
+\sum\limits_{j=3}^{N}y_{j}\frac{\partial^{2}V}{\partial y_{r}\partial y_{j}}\big)
\Big](r_{0},y''_{0})b_{1,k}
\\
+\sum\limits_{l=2}^{N-1} \Big[\frac{\partial^{2}V}{\partial y_{3}\partial y_{l+1}}-
\big(\frac{\frac{\partial \Delta V}{\partial y_{3}}}{2\Delta V}
+\frac{\nu_{3}}{\langle \nu,x_{1}\rangle}\big)\big(r\frac{\partial^{2}V}{\partial r\partial y_{l+1}}
+\sum\limits_{j=3}^{N}y_{j}\frac{\partial^{2}V}{\partial y_{j}\partial y_{l+1}}\big)
\Big](r_{0},y''_{0})b_{l+1,k}=O\big(\lambda_{m_k}^{-1-\epsilon}\big)
\vspace{0.12cm}\\
................................
\vspace{0.12cm}\\
\cos\frac{2i\pi}{m}\Big[ \frac{\partial^{2}V}{\partial r\partial y_{i+1}}-\big(\frac{\frac{\partial \Delta V}{\partial y_{i+1}}}{2\Delta V}
+\frac{\nu_{i+1}}{\langle \nu,x_{1}\rangle}\big)\big(r\frac{\partial^{2}V}{\partial r^{2}}
+\sum\limits_{j=3}^{N}y_{j}\frac{\partial^{2}V}{\partial r\partial y_{j}}\big)
\Big](r_{0},y''_{0})b_{1,k}
\\
+\sum\limits_{l=2}^{N-1} \Big[\frac{\partial^{2}V}{\partial y_{i+1}\partial y_{l+1}}-
\big(\frac{\frac{\partial \Delta V}{\partial y_{i+1}}}{2\Delta V}
+\frac{\nu_{i+1}}{\langle \nu,x_{1}\rangle}\big)\big(r\frac{\partial^{2}V}{\partial r\partial y_{l+1}}
+\sum\limits_{j=3}^{N}y_{j}\frac{\partial^{2}V}{\partial y_{j}\partial y_{l+1}}\big)
\Big](r_{0},y''_{0})b_{l+1,k}=O\big(\lambda_{m_k}^{-1-\epsilon}\big)
\vspace{0.12cm}\\
.................................
\vspace{0.12cm}\\
    \cos\frac{2(N-1)\pi}{m} \Big[ \frac{\partial^{2}V}{\partial r\partial y_{N}}-\big(\frac{\frac{\partial \Delta V}{\partial y_{N}}}{2\Delta V}
+\frac{\nu_{N}}{\langle \nu,x_{1}\rangle}\big)\big(r\frac{\partial^{2}V}{\partial r^{2}}
+\sum\limits_{j=3}^{N}y_{j}\frac{\partial^{2}V}{\partial r\partial y_{j}}\big)
\Big](r_{0},y''_{0})b_{1,k}
\\
+\sum\limits_{l=2}^{N-1} \Big[\frac{\partial^{2}V}{\partial y_{N}\partial y_{l+1}}-
\big(\frac{\frac{\partial \Delta V}{\partial y_{N}}}{2\Delta V}
+\frac{\nu_{N}}{\langle \nu,x_{1}\rangle}\big)\big(r\frac{\partial^{2}V}{\partial r\partial y_{l+1}}
+\sum\limits_{j=3}^{N}y_{j}\frac{\partial^{2}V}{\partial y_{j}\partial y_{l+1}}\big)
\Big](r_{0},y''_{0})b_{l+1,k}=O\big(\lambda_{m_k}^{-1-\epsilon}\big).
    \end{array}
  \right.
\end{equation}
Obviously, the coefficient matrix of the system
\eqref{eqs-main} is just the matrix $(A_{i,l})_{(N-1)\times (N-1)},$
where $A_{i,l},i,l=1,2,...,N-1$
are defined in \eqref{eq-matrix}.

By the assumption $(\tilde{V}')$
and the theory of solutions for
homogenous linear equations in linear Algebra, we know that the only solution
of the system \eqref{eqs-main} is $b_{1,k}=o(1),b_{l,k}=o(1)(l=3,\cdots,N).$
\end{proof}

Now we are in a position to prove Proposition \ref{thm1.2}.
\begin{proof}[\textbf{Proof of Proposition~\ref{thm1.2}}]
First we have
\begin{align}\label{eqs1.1.1}
|\eta_{k}(y)|
&\leq C\int_{\R^{N}}\frac{1}{|y-z|^{N-2}}|u^{2^{*}-2}_{m_{k}}(z)||\eta_{k}(z)|dz\cr
&\leq C\|\eta_{k}\|_{*}\int \frac{1}{|y-z|^{N-2}}|u^{2^{*}-2}_{m_{k}}(z)|
\sum_{j=1}^{m_{k}}\frac{\lambda_{m_{k}}^{\frac{N-2}{2}}}{(1+\lambda_{m_{k}}|z-x_{m_{k},j}|)^{\frac{N-2}{2}+\tau}}\cr
&\leq C\|\eta_{k}\|_{*}\sum_{j=1}^{m_{k}}\frac{\lambda_{m_{k}}^{\frac{N-2}{2}}}{(1+\lambda_{m_{k}}|y-x_{m_{k},j}|)^{\frac{N-2}{2}+\tau+\theta}},
\end{align}
for some $\theta>0.$
Hence we have
$$
\frac{|\eta_{k}(y)|}{\ds\sum_{j=1}^{m_{k}}\frac{\lambda_{m_{k}}^{\frac{N-2}{2}}}{(1+\lambda_{m_{k}}|y-x_{m_{k},j}|)^{\frac{N-2}{2}+\tau}}}
\leq C\|\eta_{k}\|_{*}\frac{\ds\sum_{j=1}^{m_{k}}\frac{\lambda_{m_{k}}^{\frac{N-2}{2}}}{(1+\lambda_{m_{k}}|y-x_{m_{k},j}|)^{\frac{N-2}{2}+\tau+\theta}}}
{\ds\sum_{j=1}^{m_{k}}\frac{\lambda_{m_{k}}^{\frac{N-2}{2}}}{(1+\lambda_{m_{k}}|y-x_{m_{k},j}|)^{\frac{N-2}{2}+\tau}}}.
$$
Since $\eta_{k}\rightarrow 0$
in $B_{R\lambda^{-1}_{m_{k}}}(x_{m_{k},j})$ and $\|\eta_{k}\|_{*}=1,$ we know that
$$
\frac{|\eta_{k}(y)|}{\ds\sum_{j=1}^{m_{k}}\frac{\lambda_{m_{k}}^{\frac{N-2}{2}}}{(1+\lambda_{m_{k}}|y-x_{m_{k},j}|)^{\frac{N-2}{2}+\tau}}}
$$
attains its maximum in $\R^{N}\backslash \cup_{j=1}^{m_{k}}B_{R\lambda^{-1}_{m_{k}}}(x_{m_{k},j}).$ Therefore
$$
\|\eta_{k}\|_{*}\leq o(1)\|\eta_{k}\|_{*}.
$$
Hence $\|\eta_{k}\|_{*}\rightarrow 0$
as $k\rightarrow \infty.$ This contradicts with $\|\eta_{k}\|_{*}=1.$
\end{proof}

\begin{remark}
If $V(y)$ is radial, then the assumption $(\tilde{V}')$
is just
$$
 \Delta V-\Big(\Delta V+\frac{1}{2}(\Delta V)'\Big)r\neq0 \,\,\,\hbox{at}\,\,\, r=r_0.
$$
\end{remark}

\begin{remark}
We want to mention that
very recently  in \cite{GMPY-21-arxiv} Guo ect. applied the local Pohozaev identities together with the Green function $G(y, x)=\frac1{(N-2)\omega_{N-1}}\frac1{|y-x|^{N-2}}$
to reprove the non-degeneracy result in \cite{GMPY-20-JFA} and get rid of the assumption
$\Delta K-(\Delta K+\frac{1}{2}(\Delta K)')r\neq 0$ at $r=r_{0}.$
So we conjecture that we may not need the assumption $(\tilde{V}')$ if we apply the similar argument as that of \cite{GMPY-21-arxiv}.
And we will return to this topic in a future work.
\end{remark}

We would like to point out that the local Pohozaev identities play a crucial role in the investigation of the non-degeneracy
of the multi-bubbling solutions. This novel idea first comes from \cite{GMPY-20-JFA}.
Also, the non-degeneracy of the solution and the uniqueness of such a solution are two very closely related problems
which are both of great interest.

 \begin{remark}\label{rem-t}
From the proof of Proposition \ref{thm1.2}, if we substitute the assumption
 $(\tilde{V}')$ for the assumption
 $(\tilde{V}),$ then only by making some minor modifications we can also prove
 the bubbling solution $u_{m}$ in Remark \ref{rem-t-8-6} is non-degenerate.
\end{remark}
It follows from Remark \ref{rem-t-8-6}
and Remark \ref{rem-t} that if the assumptions $(V)$
and $(\tilde{V})$
hold, then problem \eqref{1.3}
has a non-degenerate $m$-bubbling solution of the form
$u_{m}=Z_{\bar r_{m},\hat{y}^{*}_{m},\lambda_{m},}+\varphi_{m}=\sum\limits_{j=1}^{m}\hat{\zeta}
U_{\hat{x}_{j},\lambda_{m}}+\varphi_{m},$
where $\varphi_{m}\in H_{s}$. Moreover,  as
$m\rightarrow +\infty$,
$\lambda_{m}\in[L_{0}m^{\frac{N-2}{N-4}},L_{1}m^{\frac{N-2}{N-4}}]$,
$ (\bar r_m,\bar{y}_{m}^{*})\to (r_{0},y^{*}_{0}),$ and
$\lambda_{m}^{-\frac{N-2}{2}}\|\varphi_{m}\|_{L^{\infty}}\rightarrow
0$.

\section{Construction of a new bubble solution}\label{s3}
With the non-degenerate result obtained in section \ref{s2} at hand, we can
construct a new multi-bubbling solution for
\eqref{1.3} by the finite-dimensional reduction method.

Set $n\geq m$ be a large even integer. Recall that
\begin{align*}
\hat{x}_j=\Big(\bar{r}\cos\frac{2(j-1)\pi}{m},\bar{r}\sin\frac{2(j-1)\pi}{m},0,0,\tilde{y}^*\Big),\,
j=1,\cdots,m,\,\tilde{y}^*=(\bar{y}_5,\bar{y}_6,\cdots,\bar{y}_N),
\end{align*}
and
\begin{align*}
p_j=\Big(0,0,t\cos\frac{2(j-1)\pi}{n},t\sin\frac{2(j-1)\pi}{n},\tilde{y}^*),
\end{align*}
where $t$ is close to $r_0$ and $\tilde{y}^*$ is close to $y^{*}_{0}=(y_{0,5},y_{0,6},...,y_{0,N})\in \R^{N-4}.$

Define
\begin{equation}\label{n1}
\|u\|_{\tilde{*}}=\sup_{y\in\R^{N}}\Big(\sum_{j=1}^{n}
\frac{1}{(1+\mu_n|y-p_{n,j}|)^{\frac{N-2}{2}+\tau}}\Big)^{-1}\mu_n^{-\frac{N-2}{2}}|u(y)|
\end{equation}
and
\begin{equation}\label{n2}
\|f\|_{\tilde{*}\tilde{*}}=\sup_{y\in\R^{N}}
\Big(\sum_{j=1}^{n}\frac{1}{(1+\mu_n|y-p_{n,j}|)^{\frac{N+2}{2}+\tau}}\Big)^{-1}\mu_n^{-\frac{N+2}{2}}|f(y)|,
\end{equation}
where\,$p_{n,j}=(0,0,t_n\cos\frac{2(j-1)\pi}{n},t_n\sin\frac{2(j-1)\pi}{n},x^{*}_{n})$, $\tau=\frac{N-4}{N-2}.$

Let $u_m$ be the $m$-bubbling solutions in Remark \ref{rem-t}, where $m>0$ is a large even integer.
 Since $m$ is even, $u_{m}$ is even in $y_j,j=1,2,3,4.$

We define
\begin{align*}
X_s=\Bigr\{&u: u\in H_s, u\, \hbox{is even in}\, y_h,h=1,2,3,4,\cr
&u(y_1,y_2,t\cos\theta,t\sin\theta, y^*)=u\big(y_1,y_2,t\cos(\theta+\frac{2\pi j}{n}),t\sin(\theta+\frac{2\pi j}{n}),y^*\big)\Bigl\},
\end{align*}
where $y^*=(y_5,y_6,\cdots,y_N)$.

Denote
\begin{align*}
\mathbb{M}_j=\Bigl\{y=(y',y_3,y_4,y^*)\in \R^2\times\R^2\times\R^{N-4}:\langle\frac{(y_3,y_4)}{|(y_3,y_4)|},\frac{(p_{j3},p_{j4})}{|(p_{j3},p_{j4})|}\rangle\geq\cos\frac{\pi}{n}\Bigr\}.
\end{align*}

Assume that
\begin{align}\label{thelta}
|(t,\tilde{y}^{*})-(r_{0},y^{*}_{0})|\leq \vartheta,
\end{align}
where $\vartheta>0$ is a small constant.

 Observe that both $u_m$ and $\sum\limits_{j=1}^nU_{p_j,\mu}$ belong to $X_s$, while $u_m$ and $\sum\limits_{j=1}^nU_{p_j,\mu}$ are separated from each other. We intend to construct a solution for \eqref{1.3} of the form
\begin{align*}
u=u_m+\sum\limits_{j=1}^n\hat{\zeta}(y)U_{p_j,\mu}+\psi:=u_m+\hat{\zeta}(y)Z^{*}_{t,\tilde{y}^{*},\mu}(y)+\psi
:=u_m+Z_{t,\tilde{y}^{*},\mu}(y)+\psi,
\end{align*}
where $\psi\in X_s$ is a small perturbed term.
Recall that $Z_{p_j,\mu}=\hat{\zeta}(y)U_{p_j,\mu}.$

Define the linear operator
\begin{align}\label{eqs3.01}
Q_n\psi=-\Delta\psi+V(y)\psi-(2^*-1)\Big(u_m+\sum\limits_{j=1}^nZ_{p_j,\mu}\Big)^{2^*-2}\psi,\,\,\,\,\,\psi\in X_s.
\end{align}
Denote
\begin{align*}
D_{j,1}=\frac{\partial Z_{p_j,\mu}}{\partial \mu},\,D_{j,2}=\frac{\partial Z_{p_j,\mu}}{\partial t},\,\,\,D_{j,k}=\frac{\partial Z_{p_j,\mu}}{\partial \tilde{y}_k^*},k=5,6,\cdots,N.
\end{align*}
Let $g_n\in X_s$. Now we consider
\begin{eqnarray}\label{eqs0.1}
\begin{cases}
Q_n \psi_n= g_n+\sum\limits_{i=1}^{N-2}a_{n,i}\sum\limits_{j=1}^nZ_{p_j,\mu}^{2^*-2}D_{j,i} ,\,\,\,\cr
\psi_n\in X_s,\cr
\ds\int_{\R^N}Z_{p_j,\mu}^{2^*-2}D_{j,i}\psi_n=0,\,\,i=1,\cdots,N-2, j=1,2,\cdots,n
 \end{cases}
\end{eqnarray}
for some constants $a_{n,i}$, depending on $\psi_n$.

\begin{lemma}\label{lem3.1}
Assume that $V(y)\geq 0$ is bounded in $\R^{N}.$
Suppose that $\psi_n$ solves \eqref{eqs0.1}. If $\|g_n\|_{\tilde{*}\tilde{*}}\rightarrow0$, then $\|\psi_n\|_{\tilde{*}}\rightarrow0$.
\end{lemma}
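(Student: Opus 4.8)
The plan is to argue by contradiction in the standard spirit of finite-dimensional reduction estimates. Suppose the conclusion fails; then there exist sequences $n\to\infty$, functions $\psi_n\in X_s$ solving \eqref{eqs0.1} with $g_n$ satisfying $\|g_n\|_{\tilde*\tilde*}\to 0$, but $\|\psi_n\|_{\tilde*}\geq c>0$; after normalizing we may assume $\|\psi_n\|_{\tilde*}=1$. The first step is to extract the size of the Lagrange multipliers $a_{n,i}$: testing \eqref{eqs0.1} against $D_{j,i}$, summing in $j$, and using the near-orthogonality relations together with the nondegeneracy of the bubbles, one gets $|a_{n,i}|\le C\big(\|g_n\|_{\tilde*\tilde*}+o(1)\|\psi_n\|_{\tilde*}\big)$, so the multiplier terms on the right of \eqref{eqs0.1} are negligible in $\|\cdot\|_{\tilde*\tilde*}$.

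Next I would set up the pointwise a priori estimate. Writing $Q_n\psi_n = -\Delta\psi_n + V\psi_n - (2^*-1)(u_m+\sum_j Z_{p_j,\mu})^{2^*-2}\psi_n$ and representing $\psi_n$ via the Newtonian potential (or the Green function of $-\Delta+V$, using $V\ge0$ and boundedness), one estimates
\[
|\psi_n(y)| \le C\int_{\R^N}\frac{1}{|y-z|^{N-2}}\Big(u_m+\sum_{j=1}^n Z_{p_j,\mu}\Big)^{2^*-2}|\psi_n(z)|\,dz + (\text{terms from }g_n,\ a_{n,i}).
\]
The contribution of $\sum_j Z_{p_j,\mu}$ handled as in \eqref{eqs1.1.1} yields a gain of a small power of $\mu_n^{-\theta}$ times $\|\psi_n\|_{\tilde*}$, provided we work outside fixed balls $B_{R\mu_n^{-1}}(p_{n,j})$; the crucial new point here, compared with Proposition \ref{thm1.2}, is the cross term involving $u_m^{2^*-2}$. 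Since the centers $p_{n,j}$ of the new bubbles are separated from the centers $x_{m,j}$ of $u_m$ (they live on the orthogonal two-plane, at distance $\gtrsim 1$), on the support region of the $\tilde*$-weight one has $u_m(y)^{2^*-2}\le C$ bounded, and the convolution of $u_m^{2^*-2}$ against the weight $\sum_j(1+\mu_n|z-p_{n,j}|)^{-(N-2)/2-\tau}$ is $O(\mu_n^{-2})$ relative to the weight, hence also negligible. This is the step I expect to be the main obstacle: controlling the interaction of the background solution $u_m$ with the new concentrating bubbles uniformly, and making sure the slow decay of $U_{p_j,\mu}$ (the reason for introducing $\hat\zeta$) does not spoil the $\|\cdot\|_{\tilde*\tilde*}\to\|\cdot\|_{\tilde*}$ gain — this is exactly where the cut-off $\hat\zeta$ and the choice $\tau=\frac{N-4}{N-2}$, together with $N\ge 7$, are used.

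Combining these, one obtains on $\R^N\setminus\bigcup_j B_{R\mu_n^{-1}}(p_{n,j})$ that
\[
\frac{|\psi_n(y)|}{\sum_{j=1}^n \mu_n^{(N-2)/2}(1+\mu_n|y-p_{n,j}|)^{-(N-2)/2-\tau}} \le o(1)\,\|\psi_n\|_{\tilde*} + C\|g_n\|_{\tilde*\tilde*} + C\sum_i|a_{n,i}| = o(1),
\]
so the $\tilde*$-norm is asymptotically attained inside the balls $B_{R\mu_n^{-1}}(p_{n,j})$. There I would rescale around $p_{n,1}$, setting $\tilde\psi_n(y)=\mu_n^{-(N-2)/2}\psi_n(\mu_n^{-1}y+p_{n,1})$; by elliptic estimates $\tilde\psi_n\to\psi$ in $C^1_{loc}$, where $\psi$ solves the linearized limit equation $-\Delta\psi=(2^*-1)U_{0,1}^{2^*-2}\psi$, hence $\psi=\sum_{i}c_i\psi_i$ lies in the kernel spanned by $\psi_0,\dots,\psi_N$ (the symmetries in $X_s$ killing some components). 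But the orthogonality conditions $\int Z_{p_j,\mu}^{2^*-2}D_{j,i}\psi_n=0$ pass to the limit and force all $c_i=0$, so $\psi=0$, contradicting that the $\tilde*$-norm concentrates there. Therefore $\|\psi_n\|_{\tilde*}\to 0$, completing the proof.
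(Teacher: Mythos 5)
There is a genuine gap at the central analytic step. You invert only $-\Delta+V$ (Newtonian potential) and try to absorb the whole term $(2^*-1)\big(u_m+\sum_j Z_{p_j,\mu}\big)^{2^*-2}\psi_n$, arguing that the $u_m$-part is negligible because ``$u_m^{2^*-2}\le C$ on the support region of the $\tilde{*}$-weight'' and that its convolution is $O(\mu_n^{-2})$ relative to the weight. But the weight in \eqref{n1} is global: the supremum defining $\|\cdot\|_{\tilde{*}}$ runs over all $y\in\R^N$, so you must also control $\psi_n$ near the old concentration points $x_{m,j}$ of $u_m$, where $u_m^{2^*-2}\sim\lambda_m^{2}(1+\lambda_m|y-x_{m,j}|)^{-4}$ is huge, not bounded. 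There the claimed gain fails quantitatively: writing $W(z)=\sum_j\mu^{\frac{N-2}{2}}(1+\mu|z-p_j|)^{-\frac{N-2}{2}-\tau}$, one has $W\approx c\,n\,\mu^{-\tau}$ near $x_{m,1}$ (all $p_j$ are at distance of order one from $x_{m,1}$), and a scaling computation gives $\int_{B_{1/2}(x_{m,1})}|y-z|^{2-N}u_m^{2^*-2}(z)W(z)\,dz\approx c'\,n\,\mu^{-\tau}=O(1)\cdot W(y)$ at $y=x_{m,1}$, i.e.\ an $O(1)$, not $o(1)$, contribution; no smallness is available to close the absorption. This is unavoidable: near the $x_{m,j}$ the operator $-\Delta+V-(2^*-1)u_m^{2^*-2}$ has almost-kernel directions coming from the linearization at the old bubbles, so an argument that inverts only $-\Delta+V$ and treats $u_m^{2^*-2}\psi_n$ as a perturbation cannot work.

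This is precisely where the paper uses the machinery your proposal never invokes: rewrite \eqref{eqs0.1} as an equation for $L_m\psi$ as in \eqref{eq-12-3}, with $L_m$ the linearized operator \eqref{eq-12-1} around $u_m$, and represent $\psi$ for $|y|\le R$ through the Green's function $G_m$ of $L_m$ as in \eqref{eq-12-4}; the existence and pointwise bound for $G_m$ (Proposition \ref{add-propa.1}) rest on the non-degeneracy of $u_m$ established in Proposition \ref{thm1.2} and Remark \ref{rem-t}, which is the whole purpose of Section \ref{s2}. After this reduction only the difference $\big(u_m+Z_{t,\tilde{y}^{*},\mu}\big)^{2^*-2}-u_m^{2^*-2}$, bounded by $CZ_{t,\tilde{y}^{*},\mu}^{2^*-2}$ when $N\ge7$, needs to be absorbed. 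The remaining elements of your outline do match the paper: the estimate $a_{n,l}=\mu^{-n_l}\big(o(\|\psi\|_{\tilde{*}})+O(\|g\|_{\tilde{*}\tilde{*}})\big)$, the exterior region $|y|\ge R$ where $u_m^{2^*-2}\le C(1+|z|)^{-4}$ does give the gain \eqref{21-1-6-21}, and the final rescaling around $p_{n,j}$ with the orthogonality conditions killing the limit. But without the non-degeneracy of $u_m$ and the Green's function of $L_m$, the key weighted a priori estimate does not close, so the proposal as written has a missing idea rather than a repairable technicality.
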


 \begin{proof}
We   argue by contradiction. Suppose  that there exist $n\rightarrow
+\infty,\,\bar t_n\to r_{0}, \bar{y}^{*}_{n}\to  y^{*}_{0}, \mu_{n}\in
[L_{0}n^{\frac{N-2}{N-4}},L_{1}n^{\frac{N-2}{N-4}}]$ and
$\psi_{n}$ solving \eqref{eqs0.1} for
$g=g_{n},\mu=\mu_{n},\bar t=\bar t_{n},\bar{y}^{*}=\bar{y}^{*}_{n}$ with
$\|g_{n}\|_{\tilde{*}\tilde{*}}\rightarrow 0$ and $\|\psi_{n}\|_{\tilde{*}}\geq c>0.$ We
may assume that $\|\psi_{n}\|_{\tilde{*}}=1$. For simplicity, we drop the
subscript $n.$


Now we rewrite \eqref{eqs0.1}
\begin{eqnarray}\label{eq-12-3}
\begin{cases}
L_{m}\psi=(2^{*}-1)\Big[\big(u_{m}+Z_{t,\tilde{y}^{*},\mu}\big)^{2^{*}-2}-u^{2^{*}-2}_{m}\Big] \psi +g_n+\sum\limits_{i=1}^{N-2}a_{n,i}\sum\limits_{j=1}^nZ_{p_j,\mu}^{2^*-2}D_{j,i} ,\,\,\,\cr
\psi_n\in X_s,\cr
\ds\int_{\R^N}Z_{p_j,\mu}^{2^*-2}D_{j,i}\psi=0,\,\,i=1,\cdots,N-2, j=1,2,\cdots,n.
 \end{cases}
\end{eqnarray}

Then we have
\begin{eqnarray}\label{eq-12-4}
\psi(y)=\int_{\R^{N}}G_{m}(y,z)\Big\{(2^{*}-1)\Big[\big(u_{m}+Z_{t,\tilde{y}^{*},\mu}\big)^{2^{*}-2}-u^{2^{*}-2}_{m}\Big] \psi +g_n+\sum\limits_{i=1}^{N-2}a_{n,i}\sum\limits_{j=1}^nZ_{p_j,\mu}^{2^*-2}D_{j,i}\Big\},
\end{eqnarray}
where $G_{m}$ is the solution of equation \eqref{2-25-10n} in section \ref{sb-add}.

For $|y|\leq R,$ by Proposition \ref{add-propa.1} we get
\begin{eqnarray}\label{eq-12-5}
|\psi(y)|&\leq &
C\int_{\R^{N}}G_{m}(z,y)\Big|(2^{*}-1)\Big[\big(u_{m}+Z_{t,\tilde{y}^{*},\mu}\big)^{2^{*}-2}-u^{2^{*}-2}_{m}\Big] \psi +g_n+\sum\limits_{i=1}^{N-2}a_{n,i}\sum\limits_{j=1}^nZ_{p_j,\mu}^{2^*-2}D_{j,i}\Big|\cr
&\leq&\int_{\R^{N}}\frac{C}{|z-y|^{N-2}}\Big|\Big[\big(u_{m}+Z_{t,\tilde{y}^{*},\mu}\big)^{2^{*}-2}-u^{2^{*}-2}_{m}\Big] \psi +g_n+\sum\limits_{i=1}^{N-2}a_{n,i}\sum\limits_{j=1}^nZ_{p_j,\mu}^{2^*-2}D_{j,i}\Big|\cr
&\leq&\int_{\R^{N}}\frac{C}{|z-y|^{N-2}}\Big( \big(Z_{t,\tilde{y}^{*},\mu}\big)^{2^{*}-2}|\psi| +|g_n|+\big|\sum\limits_{i=1}^{N-2}a_{n,i}\sum\limits_{j=1}^nZ_{p_j,\mu}^{2^*-2}D_{j,i}\big|\Big),
\end{eqnarray}
where we use the following \textbf{claim} that
\begin{eqnarray}\label{eq-12-6}
\Big|\big(u_{m}+Z_{t,\tilde{y}^{*},\mu}\big)^{2^{*}-2}-u^{2^{*}-2}_{m}\Big|\leq
CZ_{t,\tilde{y}^{*},\mu}^{2^{*}-2}.
\end{eqnarray}
Indeed, when $u_{m}\leq Z_{t,\tilde{y}^{*},\mu},$ there holds
\begin{eqnarray}\label{eq-12-7}
\Big|\big(u_{m}+Z_{t,\tilde{y}^{*},\mu}\big)^{2^{*}-2}-u^{2^{*}-2}_{m}\Big|\leq
C\Big(u_{m}^{2^{*}-2}+Z_{t,\tilde{y}^{*},\mu}^{2^{*}-2}\Big)
\leq CZ_{t,\tilde{y}^{*},\mu}^{2^{*}-2}.
\end{eqnarray}
When $Z_{t,\tilde{y}^{*},\mu}\leq u_{m},$ applying the following estimate
$$
\big|(1+x)^{\alpha}-1\big|\leq C|x|^{\alpha},\,\,\,\text{if}\,\,\,\alpha\leq 1, \,\,|x|<1,
$$
noting that $2^{*}-2<1$ when $N\geq 7,$ one has
\begin{eqnarray}\label{eq-12-8}
\Big|\big(u_{m}+Z_{t,\tilde{y}^{*},\mu}\big)^{2^{*}-2}-u^{2^{*}-2}_{m}\Big|&=&
\Big|u_{m}^{2^{*}-2}\big[\big(1+\frac{Z_{t,\tilde{y}^{*},\mu}}{u_{m}}\big)^{2^{*}-2}-1\big]\Big|
\cr&\leq &C\big|u_{m}^{2^{*}-2}\big|\Big|\frac{Z_{t,\tilde{y}^{*},\mu}}{u_{m}}\Big|^{2^{*}-2}
=CZ_{t,\tilde{y}^{*},\mu}^{2^{*}-2}.
\end{eqnarray}
Combining \eqref{eq-12-7}
and \eqref{eq-12-8}, we know that \eqref{eq-12-6} is true.

As in \cite{WY1}, using Lemmas \ref{lemb2} and \ref{lemb3},
we can prove
\begin{equation}\label{2.5}
\begin{array}{ll}
\ds\int_{\R^N} \frac{1}{|z-y|^{N-2}}
\big(Z_{t,\tilde{y}^{*},\mu}\big)^{2^{*}-2}|\psi|dz
\leq C\|\psi\|_{*}
\ds\sum_{j=1}^{n}\frac{\mu^{\frac{N-2}{2}}}{(1+\mu|y-p_{j}|)^{\frac{N-2}{2}+\tau+\iota}}.
 \end{array}
\end{equation}
For $|y|\geq R,$  by the Poisson formula and noting that $V(y)$ is nonnegative,
from \eqref{eqs0.1} we have
\begin{eqnarray}\label{eq-12-5}
|\psi(y)|&\leq &
\int_{\R^{N}}\frac{C}{|z-y|^{N-2}}\Big(\big(Z_{t,\tilde{y}^{*},\mu}\big)^{2^{*}-2} |\psi | +|g_n|+\Big|\sum\limits_{i=1}^{N-2}a_{n,i}\sum\limits_{j=1}^nZ_{p_j,\mu}^{2^*-2}D_{j,i}\Big|\Big)\cr
&&+\int_{\R^{N}}\frac{C}{|z-y|^{N-2}}u_{m}^{2^*-2}|\psi |.
\end{eqnarray}
Now we first estimate

\[
\int_{\mathbb R^N} \frac1{|z-y|^{N-2}}u_m^{2^*-2}| \psi|\le
\|\psi\|_{*} \int_{\mathbb R^N} \frac1{|z-y|^{N-2}}u_m^{2^*-2} \sum_{j=1}^n \frac{\mu^{\frac{N-2}2}}{ ( 1+ \mu |z-p_j|)^{\frac{N-2}2+\tau}}.
\]
Let  $d_j= \frac12 |y-p_j|$. Then, from \eqref{eq-u} we have
\[
\begin{split}
&\int_{ B_{d_j}(p_j)} \frac1{|z-y|^{N-2}}u_m^{2^*-2}  \frac{\mu^{\frac{N-2}2}}{ ( 1+ \mu |z-p_j|)^{\frac{N-2}2+\tau}}\\
\le &
\frac{C}{   \mu^\tau d_j^{N-2}}\int_{ B_{d_j}(p_j)}
 \frac{1}{ |z-p_j|^{\frac{N-2}2+\tau}}\frac1{(1+|z|)^4}\\
 \le &\frac{C}{   \mu^\tau d_j^{\frac{N-2}2+\tau+2}}\le \frac{C\mu^{\frac{N-2}2}}{ ( 1+ \mu |y-p_j|)^{\frac{N-2}2+\tau}}\frac1{|y|^2},
\end{split}
\]
if $ N>6+2\tau $,

\[
\int_{ B_{d_j}(p_j)} \frac1{|z-y|^{N-2}}u_m^{2^*-2}  \frac{\mu^{\frac{N-2}2}}{ ( 1+ \mu |z-p_j|)^{\frac{N-2}2+\tau}}\\
\le
\frac{C\mu^{\frac{N-2}2}}{ ( 1+ \mu |y-p_j|)^{\frac{N-2}2+\tau}}\frac{\ln |y|}{|y|^2},
\]
if $ N=6+2\tau $,
  and
\[
\begin{split}
&\int_{ B_{d_j}(p_j)} \frac1{|z-y|^{N-2}}u_m^{2^*-2}  \frac{\mu^{\frac{N-2}2}}{ ( 1+ \mu |z-p_j|)^{\frac{N-2}2+\tau}}\\
\le&
\frac{C}{   \mu^\tau d_j^{N-2}}
 \le  \frac{C\mu^{\frac{N-2}2}}{ ( 1+ \mu |y-p_j|)^{\frac{N-2}2+\tau}}\frac1{|y|^{\frac{N-2}2-\tau}},
\end{split}
\]
if $ N<6+2\tau $.
By direct computations, we also have
\[
\begin{split}
&\int_{\mathbb R^N\setminus B_{d_j}(p_{j})} \frac1{|z-y|^{N-2}}u_m^{2^*-2}  \frac{\mu^{\frac{N-2}2}}{ ( 1+ \mu |z-p_j|)^{\frac{N-2}2+\tau}}\\
\le &
\frac{C\mu^{\frac{N-2}2}}{ ( 1+ \mu|y-p_j|)^{\frac{N-2}2+\tau}}\int_{\mathbb R^N\setminus  B_{d_j}(p_{j})} \frac1{|z-y|^{N-2}}\frac1{(1+|z|)^4}
\\
\le &\frac{C\mu^{\frac{N-2}2}}{ ( 1+ \mu |y-p_j|)^{\frac{N-2}2+\tau}}\frac1{|y|^2}.
\end{split}
\]
Hence we get that for $|y|\ge R$,
\begin{equation}\label{21-1-6-21}
\begin{split}
 \int_{\R^{N}}\frac{C}{|z-y|^{N-2}}u_{m}^{2^*-2}|\psi |\le
C\|\psi\|_{*}
\Big(\ds\sum_{j=1}^{n}\frac{\mu^{\frac{N-2}{2}}}{(1+\mu|y-p_{j}|)^{\frac{N-2}{2}+\tau}}
\frac{1}{|y|^\sigma}
\Big),
\end{split}
\end{equation}
for some $\sigma>0.$


Moreover we can obtain
\begin{equation}\label{2.6}
\begin{array}{ll}
\ds\int_{\R^N} \frac{1}{|z-y|^{N-2}}|g(z)|dz
\leq  C\|g\|_{\tilde{*}\tilde{*}}\mu^{\frac{N-2}{2}}\ds\sum_{j=1}^{n}\frac{1}{(1+\mu|y-x_{j}|)^{\frac{N-2}{2}+\tau}}
 \end{array}
\end{equation}
and
\begin{equation}\label{2.7}
\begin{array}{ll}
\ds\int_{\R^N}
\frac{1}{|z-y|^{N-2}}\Big|\ds\sum_{j=1}^{n}D_{x_{j},\mu}^{2^{*}-2}Z_{j,l}\Big|dz
\leq
C\mu^{\frac{N-2}{2}+n_{l}}\ds\sum_{j=1}^{n}\frac{1}{(1+\mu|y-p_{j}|)^{\frac{N-2}{2}+\tau}},
 \end{array}
\end{equation}
where $n_{j}=1$, $j=2,\cdots,N-2,$ and $n_{1}=-1.$

To estimate $a_{l},l=1,2,\cdots,N-2,$ multiplying \eqref{eqs0.1} by $D_{1,l}(l=1,2,\cdots,N-2)$ and integrating, we see that $a_{l}$ satisfies
\begin{equation}\label{2.8}
\begin{split}
&\ds\sum_{h=1}^{N-2}a_{h}\sum_{j=1}^{n}\int_{\R^N} Z_{p_{j},\mu}^{2^{*}-2}D_{j,h}D_{1,l}
\\
=&
\Bigl\langle \ds-\Delta \psi+V(r,y^{*})\psi
-(2^{*}-1)Z_{t,y_{0}^{*},\mu}^{2^{*}-2}
\psi,D_{1,l}\Bigr\rangle-\langle g, D_{1,l}\rangle.
 \end{split}
 \end{equation}

It follows from Lemma \ref{lemb1} that
\begin{equation}\label{2.8'}
 \begin{split}
&\big|\big\langle g, D_{1,l}\big\rangle\big|\\
\leq &C\|g\|_{\tilde{*}\tilde{*}}\ds\int_{\R^N}\frac{\mu^{\frac{N-2}{2}+n_{l}}}{(1+\mu|y-p_{1}|)^{N-2}}
\sum_{j=1}^{n}\frac{\mu^{\frac{N+2}{2}}}{(1+\mu|y-p_{j}|)^{\frac{N+2}{2}+\tau}}\\
\leq & C \mu^{n_{l}}\|g\|_{\tilde{*}\tilde{*}}\Big( C + C\sum_{j=2}^{n}\frac1{(\mu|p_j-p_1|
)^\tau}\Big)\leq C \mu^{n_{l}}\|g\|_{\tilde{*}\tilde{*}}.
 \end{split}
\end{equation}

 Similar to (2.10) in \cite{PWY-18-JFA}, we can estimate
 \begin{equation}\label{2.9.2}
\begin{array}{ll}
|  \bigl\langle  V(r,y^{*})\psi
,D_{1,l}\bigr\rangle |=O\Bigl(
\ds\frac{ \mu^{n_{l}}\|\psi\|_{\tilde{*}}}{\mu^{1+\epsilon}}\Bigr),
\end{array}
 \end{equation}
where we use the fact that for any $| (r,y^{*})-(r_0, y_0^{*})|\le 2\delta,$
\begin{equation}\label{add1}
\frac1\mu\le \frac C{1+\mu|y-p_j|}.
\end{equation}

On the other hand, direct calculation gives
\begin{equation}\label{2.9.5}
 \Big|\Bigl\langle \ds-\Delta \psi
-(2^{*}-1)(u_{m}+Z_{t,\tilde{y}^{*},\mu})^{2^{*}-2}
\psi,D_{1,l}\Bigr\rangle \Big|=O\Bigl(
\frac{\mu^{n_{l}}\|\psi\|_{\tilde{*}}}{{\mu^{1+\epsilon}}}\Bigr).
 \end{equation}

 Combining \eqref{2.8'}, \eqref{2.9.2}, \eqref{2.9.5}, we have
 \begin{equation}\label{2.14}
 \begin{array}{ll}
 \Bigl\langle\ds-\Delta \psi+V(t,y^{*})\psi
-(2^{*}-1)(u_{m}+Z_{t,\tilde{y}^{*},\mu})^{2^{*}-2}
\psi,D_{1,l}\Bigr\rangle-\langle g, D_{1,l}\rangle
=O\Bigl(\mu^{n_{l}}\big(
\frac{\|\psi\|_{\tilde{*}}}{{\mu^{1+\epsilon}}}+\|g\|_{\tilde{*}\tilde{*}}\big)\Bigr).
\end{array}
 \end{equation}

It is easy to check that

 \begin{equation}\label{2.14'}
 \sum_{j=1}^{n}\langle
D_{p_{j},\mu}^{2^{*}-2}D_{j,h},D_{1,l}\rangle
= ( \bar{c}  +o(1)) \delta_{hl}\mu^{2n_l}
 \end{equation}
 for some constant $\bar{c}>0$.

Now inserting \eqref{2.14} and \eqref{2.14'} into
 \eqref{2.8}, we find
 \begin{equation}\label{2.16}
 a_{l}=\frac{1}{\mu^{n_{l}}}\big(o(\|\psi\|_{\tilde{*}})+O(\|g\|_{\tilde{*}\tilde{*}})\big).
 \end{equation}
 So,
 \begin{equation}\label{2.11}
\|\psi\|_{\tilde{*}}\leq
\Bigg(o(1)+\|g_{n}\|_{\tilde{*}\tilde{*}}+\frac{\sum_{j=1}^{n}\frac{1}{(1+\mu|y-p_{j}|)^{\frac{N-2}{2}+\tau+\iota}}}
{\sum_{j=1}^{n}\frac{1}{(1+\mu|y-p_{j}|)^{\frac{N-2}{2}+\tau}}}\Bigg).
 \end{equation}
 Since $\|\psi\|_{\tilde{*}}=1$, we obtain from \eqref{2.11} that there is $R>0$ such that
 \begin{equation}\label{2.12}
 \|\mu^{-\frac{N-2}{2}}\psi\|_{L^\infty(B_{R/\mu}(p_{j}))}\geq a>0,
 \end{equation}
 for some $j$. But $\tilde{\psi}(y)=\mu^{-\frac{N-2}{2}}\psi(\mu^{-1}y+p_{j}))$ converges uniformly in any compact set to a solution $u$ of
\begin{equation}\label{2.13}
-\Delta  u-(2^{*}-1)U_{0,1}^{2^{*}-2}u =0,\,\,\,\text{in}\,\,\,
\R^{N},
 \end{equation}
 and $u$ is perpendicular to the kernel of \eqref{2.13}.
 Hence $u=0.$ This is a contradiction to \eqref{2.12}.
 \end{proof}

We want to construct a solution $u$ for \eqref{1.3} with
$$
u=u_m+\sum\limits_{j=1}^n\hat{\zeta} U_{p_j,\mu}+\psi,
$$
where $\psi\in X_s$ is a small perturbed term, satisfying
\begin{align*}
\int_{\R^N}Z_{p_j,\mu}^{2^*-2}D_{j,l}\psi=0,\,\,\,j=1,\cdots,n, l=1,2,\cdots, N-2.
\end{align*}
Then  $\psi$ satisfies
\begin{align*}
Q_n \psi=l_n+R_n(\psi),
\end{align*}
where
\begin{align*}
Q_n\psi=-\Delta \psi+V(y)\psi-(2^*-1)\Big(u_m+\sum\limits_{j=1}^nZ_{p_j,\mu}\Big)^{2^*-2}\psi,
\end{align*}
\begin{align*}
l_n=\Big(u_m+\sum\limits_{j=1}^nZ_{p_j,\mu}\Big)^{2^*-1}-u_m^{2^*-1}-\sum\limits_{j=1}^n\hat{\zeta} U_{p_j,\mu}^{2^*-1}
-V(y)\sum\limits_{j=1}^nZ_{p_j,\mu}
+Z^{*}_{t,\tilde{y}^{*},\mu}\Delta \hat{\zeta} +2\nabla \hat{\zeta} \nabla Z^{*}_{t,\tilde{y}^{*},\mu},
\end{align*}
and
\begin{align*}
R_n(\psi)=&\Big(u_m+\sum\limits_{j=1}^nZ_{p_j,\mu}+\psi\Big)^{2^*-1}
-\Big(u_m+\sum\limits_{j=1}^nZ_{p_j,\mu}\Big)^{2^*-1}
-(2^*-1)\Big(u_m+\sum\limits_{j=1}^nZ_{p_j,\mu}\Big)^{2^*-2}\psi.
\end{align*}

We have the following estimate for $\|l_n\|_{\tilde{*}\tilde{*}}.$
\begin{lemma}\label{lem3.2}
There exists a small $\epsilon>0$, such that
\begin{align}\label{eq-l}
\|l_n\|_{\tilde{*}\tilde{*}}\leq \frac{C }{\mu^{1+\epsilon}}.
\end{align}
\end{lemma}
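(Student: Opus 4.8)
The plan is to estimate $l_n$ term by term, using the pointwise bounds on $u_m$ from Lemma \ref{lem2.2} and the standard decay estimates for the bubbles $U_{p_j,\mu}$ and their derivatives recorded in the appendix (Lemmas \ref{lemb1}--\ref{lemb3}). Recall that
\[
l_n=\Big(u_m+\sum_{j=1}^nZ_{p_j,\mu}\Big)^{2^*-1}-u_m^{2^*-1}-\sum_{j=1}^n\hat{\zeta} U_{p_j,\mu}^{2^*-1}
-V(y)\sum_{j=1}^nZ_{p_j,\mu}
+Z^{*}_{t,\tilde{y}^{*},\mu}\Delta \hat{\zeta} +2\nabla \hat{\zeta}\,\nabla Z^{*}_{t,\tilde{y}^{*},\mu}.
\]
I would split this as $l_n=I_1+I_2+I_3+I_4$, where $I_1$ collects the nonlinear interaction terms $\big(u_m+\sum_j Z_{p_j,\mu}\big)^{2^*-1}-u_m^{2^*-1}-\sum_j Z_{p_j,\mu}^{2^*-1}$, then $I_2=\sum_j(Z_{p_j,\mu}^{2^*-1}-\hat\zeta U_{p_j,\mu}^{2^*-1})$ accounts for the cut-off discrepancy in the power nonlinearity, $I_3=-V(y)\sum_j Z_{p_j,\mu}$ is the potential term, and $I_4=Z^{*}_{t,\tilde{y}^{*},\mu}\Delta\hat\zeta+2\nabla\hat\zeta\,\nabla Z^{*}_{t,\tilde{y}^{*},\mu}$ is supported in the annulus $\delta\le|(r,y^*)-(r_0,y_0^*)|\le 2\delta$ where $\hat\zeta$ varies.

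For $I_1$, I would distinguish the region near a given $p_j$ from the region away from all bubbles. Near $p_j$, write $u_m+\sum_i Z_{p_i,\mu}=Z_{p_j,\mu}(1+w)$ with $w$ small (using that $u_m$ is concentrated far away and the other bubbles decay), and use $|(1+w)^{2^*-1}-1-(2^*-1)w|\le C|w|^{\min(2,2^*-1)}$ together with $\big(\sum_i Z_{p_i,\mu}\big)^{2^*-1}-\sum_i Z_{p_i,\mu}^{2^*-1}=O\big(Z_{p_j,\mu}^{2^*-2}\sum_{i\ne j}Z_{p_i,\mu}\big)$; the bubble-interaction sums $\sum_{i\ne j}(\mu|p_i-p_j|)^{-(N-2)}$ and the cross term with $u_m$ (controlled by $u_m(p_j)=O(\lambda^{(N-2)/2}/(\lambda\mu^{-1})^{\text{large}})$, hence a negative power of $\mu$) produce the $\mu^{-1-\epsilon}$ gain, exactly as in Lemma 2.4 of \cite{PWY-18-JFA} and in \cite{GMPY-20-JFA}. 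Away from the bubbles one controls $I_1$ by $C\big(\sum_j U_{p_j,\mu}\big)^{2^*-1}+Cu_m^{2^*-2}\sum_jU_{p_j,\mu}$ and checks this fits into $\|\cdot\|_{\tilde*\tilde*}$ with room to spare. The term $I_3$ is handled using the crucial inequality $\frac1\mu\le\frac{C}{1+\mu|y-p_j|}$ (valid on the support of $Z_{p_j,\mu}$, i.e.\ for $|(r,y^*)-(r_0,y_0^*)|\le 2\delta$), which converts one factor $\mu^{-(N-2)/2}$ in $\|Z_{p_j,\mu}\|$ into the required extra decay, yielding $\|I_3\|_{\tilde*\tilde*}\le C\mu^{-2}$, and more carefully $\le C\mu^{-1-\epsilon}$.

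The term $I_4$ is supported where $|y-p_j|\ge c>0$ uniformly, so there $U_{p_j,\mu}$ and its gradient are bounded by $C\mu^{-(N-2)/2}$ and $C\mu^{-(N-2)/2}$ respectively (indeed by any negative power of $\mu$ we like, since $1+\mu|y-p_j|\ge c\mu$); measuring against the weight $\mu^{-(N+2)/2}(1+\mu|y-p_j|)^{-(N+2)/2-\tau}\ge c\,\mu^{-(N+2)/2}\mu^{-(N+2)/2-\tau}$ on that annulus then shows $\|I_4\|_{\tilde*\tilde*}$ is $O(\mu^{-\text{(large)}})$, negligible. The term $I_2$ is likewise supported in $\{\hat\zeta\ne 1\}$, i.e.\ away from the $p_j$, and is estimated the same way. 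Collecting the four pieces gives $\|l_n\|_{\tilde*\tilde*}\le C\mu^{-1-\epsilon}$; tracking which term forces the value of $\epsilon$, it is the interaction part of $I_1$ and the potential term $I_3$ that are the slowest-decaying, so $\epsilon$ can be taken as in \cite{PWY-18-JFA}.

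The main obstacle is the careful bookkeeping in $I_1$: one must show that every cross interaction (bubble–bubble among the $n$ new bubbles, and new-bubble–with–$u_m$) decays strictly faster than $\mu^{-1}$ after being weighted by $\|\cdot\|_{\tilde*\tilde*}$, which requires choosing the exponents $\tau=\frac{N-4}{N-2}$ and the auxiliary $\iota$ in the weights correctly and invoking the elementary but delicate summation estimates of Lemmas \ref{lemb2}--\ref{lemb3}; this is precisely the point where $N\ge7$ (so that $2^*-2<1$ and the sublinear inequality $|(1+x)^{\alpha}-1|\le C|x|^\alpha$ applies) enters. The remaining pieces $I_2,I_3,I_4$ are routine once the inequality $\frac1\mu\le\frac C{1+\mu|y-p_j|}$ on the relevant supports is used. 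I would also note that this is the exact analogue of Lemma 2.4 in \cite{GMPY-20-JFA}, so most estimates can be cited with only the modifications needed to accommodate the potential $V$ and the $\R^4\times\R^{N-4}$ splitting.
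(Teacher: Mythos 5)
The decomposition and overall strategy match the paper's (the paper splits $l_n$ into $J_0,\dots,J_4$; your $I_1+I_2$ correspond to $J_0+J_4$, your $I_3=J_1$, your $I_4=J_2+J_3$), and the general references to Lemmas \ref{lemb1}--\ref{lemb3} and to Lemma~2.5 of \cite{PWY-18-JFA} are right. But one key mechanism is misstated. In your estimate for the $u_m$--cross term you claim the control comes from $u_m(p_j)$ being ``a negative power of $\mu$,'' with a formula $\lambda^{(N-2)/2}/(\lambda\mu^{-1})^{\text{large}}$ that mixes the two independent parameters incorrectly: $u_m$ does not depend on $\mu$ at all, and $u_m(p_j)=O(1)$ (small in $m$, not in $\mu$). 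The $\mu^{-1-\epsilon}$ gain for the genuinely new term $U_{p_1,\mu}^{2^*-2}u_m$ actually comes from the factor $U_{p_1,\mu}^{2^*-2}$ measured against the $\|\cdot\|_{\tilde*\tilde*}$-weight after splitting $\mathbb{M}_1$ into $\mathbb{B}_1=\mathbb{M}_1\cap B_{\mu^{-1/2}}(p_1)$ and its complement. Inside $\mathbb{B}_1$ one writes
\[
U_{p_1,\mu}^{2^*-2}=\frac{\mu^{\frac{N+2}{2}}}{(1+\mu|y-p_1|)^{\frac{N+2}{2}+\tau}}\cdot\frac{\mu^{2-\frac{N+2}{2}}}{(1+\mu|y-p_1|)^{4-(\frac{N+2}{2}+\tau)}}
\]
and bounds the second factor by $\mu^{-(\frac{N+2}{4}-\frac{\tau}{2})}$ using $\mu|y-p_1|\le\mu^{1/2}$, with $\frac{N+2}{4}-\frac{\tau}{2}>1$; $u_m$ is merely used as a bounded function here. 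This $\mathbb{B}_1$-versus-complement split is precisely the bookkeeping your sketch glosses over, and it is the content that distinguishes the proof of this lemma (following \cite{GLPY-18-JDE}) from the earlier argument in \cite{PWY-18-JFA}. Finally, a minor misattribution: the restriction $N\ge 7$ (equivalently $2^*-2<1$) is used in Lemma~\ref{lem3.1}, via the inequality $|(1+x)^\alpha-1|\le C|x|^\alpha$ for $\alpha\le 1$; the expansion actually invoked in Lemma~\ref{lem3.2} is $(1+t)^p=1+pt+O(t^p)$ for $p=2^*-1\in(1,2]$, which only needs $N\ge 6$.
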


\begin{proof}
First, we write
\begin{align}\label{eq-l1}
l_n=&\Big[\big(\sum\limits_{j=1}^n\hat{\zeta} U_{p_j,\mu}\big)^{2^*-1}-\sum\limits_{j=1}^n\hat{\zeta} U_{p_j,\mu}^{2^*-1}\Big]
-V(y)\sum\limits_{j=1}^n \hat{\zeta}U_{p_j,\mu}\cr
&+Z^{*}_{t,\tilde{y}^{*},\mu}\Delta \hat{\zeta} +2\nabla \hat{\zeta} \nabla Z^{*}_{t,\tilde{y}^{*},\mu}\cr
&+\Big[\big(u_m+\sum\limits_{j=1}^n\hat{\zeta} U_{p_j,\mu}\big)^{2^*-1}-u_m^{2^*-1}-\big(\sum\limits_{j=1}^n\hat{\zeta} U_{p_j,\mu}\big)^{2^*-1}\Big]\cr
:=&J_0+J_1+J_2+J_{3}+J_{4}.
\end{align}
Just by the same argument as that of Lemma 2.5 in \cite{PWY-18-JFA},
we can estimate
\begin{align}\label{eq-l1-1}
J_0+J_1+J_2+J_{3}
\leq  \frac{C}{\mu^{1+\epsilon}}
\sum_{j=1}^{n}\frac{\mu^{\frac{N+2}{2}}}{(1+\mu|y-p_{j}|)^{\frac{N+2}{2}+\tau}}.
\end{align}

Now we estimate $J_{4}.$
Using the assumed symmetry, we just need to estimate $J_4$ in $\mathbb{M}_1.$

Denote $\mathbb{B}_{1}=\mathbb{M}_1\bigcap B_{\mu^{-\frac{1}{2}}}(p_1).$
Note that, it holds $U_{p_1,\mu}\geq c_0$ in $\mathbb{B}_{1}.$

When $y\in \mathbb{B}_{1},$ applying the following formula
$$
(1+t)^{p}=1+pt+O(t^{p}),\,\,\,\text{for}\,\,t\geq 0\,\,\text{and}\,\,p\in (1,2],
$$
we have
\begin{align}\label{1}
|J_4|&\leq C U_{p_1,\mu}^{2^*-2}\Big(u_m+\sum\limits_{j=2}^nU_{p_j,\mu}\Big)+
\Big(\sum_{j=2}^{n}U_{p_{j},\mu}\Big)^{2^{*}-1}+\tilde{J}_4
:=J_{4,1}+J_{4,2}+\tilde{J}_4,
\end{align}
where $\tilde{J}_4\leq C$ in $\mathbb{B}_{1}.$

For $y\in \mathbb{M}_1$, by Lemma \ref{lemb1} we obtain
\begin{align}\label{esj42}
J_{4,2}\leq& C\mu^{\frac{N+2}{2}}
\Big(\sum\limits_{j=2}^n\frac{1}{(1+\mu|y-p_j|)^{N-2}}\Big)^{\frac{N+2}{N-2}}\cr
\leq&C\mu^{\frac{N+2}{2}}\Big(\sum\limits_{j=2}^n\frac{1}{(1+\mu|y-p_1|)^{\frac{N+2}{2}}}\frac{1}{(1+\mu|y-p_j|)^{\frac{N-2}{2}}}\Big)^{\frac{N+2}{N-2}}\cr
\leq&C\mu^{\frac{N+2}{2}}\Bigl[\sum\limits_{j=2}^n\frac{1}{(\mu|p_j-p_1|)^{\frac{N-2}{2}-\frac{N-2}{N+2}\tau}}\bigl(\frac{1}{(1+\mu|y-p_1|)^{\frac{N-2}{2}+\frac{N-2}{N+2}\tau}}\cr
&+
\frac{1}{(1+\mu|y-p_j|)^{\frac{N-2}{2}+\frac{N+2}{N-2}\tau}}\bigr)\Bigr]^{\frac{N+2}{N-2}}\cr
\leq&C\mu^{\frac{N+2}{2}}\big(\frac{n}{\mu}\big)^{\frac{N+2}{2}-\tau}\frac{1}{(1+\mu|y-p_1|)^{\frac{N+2}{2}+\tau}}\cr
\leq& \frac{C}{\mu^{\frac{N+2}{N-2}-\frac{2(N-4)}{(N-2)^2}}}\frac{\mu^{\frac{N+2}{2}}}{(1+\mu|y-p_1|)^{\frac{N+2}{2}+\tau}}\cr
\leq&\frac{C}{\mu^{1+\varepsilon}}\frac{\mu^{\frac{N+2}{2}}}{(1+\mu|y-p_1|)^{\frac{N+2}{2}+\tau}},\,\,y\in \mathbb{M}_1.
\end{align}
Noting that
\begin{align*}
\frac{\mu^{\frac{N+2}{2}}}{1+\mu|y-p_1|^{\frac{N+2}{2}+\tau}}\geq
\frac{\mu^{\frac{N+2}{2}}}{(1+\mu^{\frac{1}{2}})^{\frac{N+2}{2}+\tau}}\geq c_0\mu^{\frac{N+2}{4}-\frac{\tau}{2}},
\,\,y\in \mathbb{B}_{1},
\end{align*}
we have
\begin{align}\label{2}
c_0\leq \frac{\mu^{\frac{N+2}{2}}}{(1+\mu|y-p_1|)^{\frac{N+2}{2}+\tau}}\mu^{-(\frac{N+2}{4}-\frac{\tau}{2})}\leq \frac{C}{\mu^{1+\varepsilon}}\frac{\mu^{\frac{N+2}{2}}}{(1+\mu|y-p_1|)^{\frac{N+2}{2}+\tau}},
\end{align}
which implies that
\begin{align}\label{3}
|\tilde{J}_4|\leq \frac{C}{\mu^{1+\varepsilon}}\frac{\mu^{\frac{N+2}{2}}}{(1+\mu|y-p_1|)^{\frac{N+2}{2}+\tau}},\,\,y\in \mathbb{B}_{1}.
\end{align}
On the other hand, when $y\in \mathbb{B}_{1},$ there holds
\begin{align*}
J_{4,1}&\leq \Big|U_{p_1,\mu}^{2^*-2}(u_m+\sum\limits_{j=2}^nU_{p_j,\mu})\Big|
\leq C U_{p_1,\mu}^{2^*-2}+CU_{p_1,\mu}^{2^*-2}\sum\limits_{j=2}^nU_{p_j,\mu}\\
&:=J_{4,1,1}+J_{4,1,2}.
\end{align*}
Similar to Lemma 2.5 in \cite{PWY-18-JFA}, we can prove
\begin{align}\label{es-3}
|J_{4,1,2}|\leq \frac{C}{\mu^{1+\varepsilon}}\frac{\mu^{\frac{N+2}{2}}}
{(1+\mu|y-p_1|)^{\frac{N+2}{2}+\tau}},
\,\,\,y\in \mathbb{B}_{1}.
\end{align}
Moreover, if $N\geq 5$ and $y\in \mathbb{B}_{1},$
\begin{align*}
(1+\mu|y-p_1|)^{\frac{N+2}{2}+\tau-4}\leq C\mu^{\frac{1}{2}(\frac{N+2}{2}+\tau-4)},
\end{align*}
noting that $\frac{N+2}{4}-\frac{\tau}{2}>1,$ then we have
\begin{align}\label{es-4}
J_{4,1,1}
\leq &\frac{\mu^{\frac{N+2}{2}}}{(1+\mu|y-p_1|)^{\frac{N+2}{2}+\tau}}\frac{\mu^{2-\frac{N+2}{2}}}{(1+\mu|y-p_1|)^{4-(\frac{N+2}{2}+\tau)}}\cr
\leq& C\mu^{-(\frac{N+2}{4}-\frac{\tau}{2})}\frac{\mu^{\frac{N+2}{2}}}{(1+\mu|y-p_1|)^{\frac{N+2}{2}+\tau}}\cr
\leq&\frac{C}{\mu^{1+\varepsilon}}\frac{\mu^{\frac{N+2}{2}}}{(1+\mu|y-p_1|)^{\frac{N+2}{2}+\tau}}.
\end{align}
Therefore, it follows from \eqref{1} to \eqref{es-4} that
\begin{align}\label{esj4-sum1}
J_{4}\leq \frac{C}{\mu^{1+\epsilon}}\frac{\mu^{\frac{N+2}{2}}}{(1+\mu|y-p_1|)^{\frac{N+2}{2}+\tau}},\,y\in \mathbb{B}_{1}.
\end{align}
On the other hand, noting that in $\mathbb{M}_{1}\setminus \mathbb{B}_{1},$ it holds $U_{p_1,\mu}\leq C$, therefore,
\begin{align}\label{j4-2}
|J_4|\leq
C \sum\limits_{j=1}^nZ_{p_j,\mu}
+C\Big(\sum\limits_{j=1}^nU_{p_j,\mu}\Big)^{2^*-1}=\hat{J}_{4,1}+\hat{J}_{4,2}.
\end{align}
Observe that
\begin{align*}
\hat{J}_{4,1}=\sum\limits_{j=1}^n\hat{\zeta}(y)U_{p_j,\mu}\leq \hat{\zeta}(y)U_{p_1,\mu}+\sum\limits_{j=2}^n\hat{\zeta}(y)U_{p_j,\mu}.
\end{align*}
We have
\begin{align}\label{w1}
\hat{\zeta}U_{p_1,\mu}\leq& \frac{\hat{\zeta}(y)\mu^{\frac{N-2}{2}}}{(1+\mu|y-p_1|)^{N-2}}\leq \frac{\hat{\zeta}(y)\mu^{\frac{N+2}{2}}}{\mu^2(1+\mu|y-p_1|)^{N-2}}\cr
\leq&\frac{C}{\mu^{1+\epsilon}}\frac{\hat{\zeta}(y)}{\mu^{1-\epsilon}(1+\mu|y-p_1|)^{N-2}}\leq \frac{C}{\mu^{1+\epsilon}}\frac{\mu^{\frac{N+2}{2}}}{(1+\mu|y-p_1|)^{N-1-\epsilon}}\cr
\leq& \frac{C}{\mu^{1+\epsilon}}\frac{\mu^{\frac{N+2}{2}}}{(1+\mu|y-p_1|)^{\frac{N+2}{2}+\tau}},
\end{align}
since $N-1-\epsilon>\frac{N+2}{2}+\tau.$
Also for $y\in \mathbb{M}_{1}\setminus \mathbb{B}_{1}$ we can estimate
\begin{align}\label{w2}
&\sum\limits_{j=2}^n\hat{\zeta}(y)U_{p_j,\mu}
\leq C\sum\limits_{j=2}^n\frac{\hat{\zeta}(y)\mu^{\frac{N-2}{2}}}{(1+\mu|y-p_j|)^{N-2}}\cr
\leq&\frac{C}{\mu^{1+\epsilon}} \sum\limits_{j=2}^n\frac{\hat{\zeta}(y)\mu^{\frac{N+2}{2}}}{\mu^{1-\epsilon}(1+\mu|y-p_j|)^{N-2}}
\leq \frac{C}{\mu^{1+\epsilon}}
\frac{ n\mu^{\frac{N+2}{2}}}{\mu^{1-\epsilon}(1+\mu|y-p_1|)^{N-2}}
\cr
\leq &\frac{C}{\mu^{1+\epsilon}}\frac{ \mu^{\frac{N+2}{2}}}{\mu^{1-\tau-\epsilon}(1+\mu|y-p_1|)^{N-2}}
\leq\frac{C}{\mu^{1+\epsilon}}
\frac{\mu^{\frac{N+2}{2}}}{(1+\mu|y-p_1|)^{N-1-\tau-\epsilon}}
\cr
\leq& \frac{C}{\mu^{1+\epsilon}}\frac{\mu^{\frac{N+2}{2}}}{(1+\mu|y-p_1|)^{\frac{N+2}{2}+\tau}},
\end{align}
since $N-1-\tau-\epsilon\geq \frac{N+2}{2}+\tau.$


Finally, we have
\begin{align*}
|\hat{J}_{4,2}|\leq
C U_{p_1,\mu}^{2^*-1}
+C\Big(\sum\limits_{j=2}^nU_{p_j,\mu}\Big)^{2^*-1}=\hat{J}_{4,2,1}+\hat{J}_{4,2,2}.
\end{align*}

And
from \eqref{esj42}, we have
\begin{align}\label{esj22}
|\hat{J}_{4,2,2}|
\leq C|J_{4,2}|\leq\frac{C}{\mu^{1+\epsilon}}\frac{\mu^{\frac{N+2}{2}}}{(1+\mu|y-p_1|)^{\frac{N+2}{2}+\tau}}\,\,\,\,y\in \mathbb{M}_{1}\setminus \mathbb{B}_{1}.
\end{align}
For $y\in \mathbb{M}_{1}\setminus \mathbb{B}_{1},$
 $|y-p_1|\geq\mu^{-\frac{1}{2}}$ and $\mu|y-p_1|\geq \mu^{\frac{1}{2}},$
 then from $\frac{1}{2}(\frac{N+2}{2}-\tau)=\frac{N+2}{4}-\frac{N-4}{2(N-2)}>1,$
we have
\begin{align}\label{esj2-1}
\hat{J}_{4,2,1}\leq &C\frac{\mu^{\frac{N+2}{2}}}{(1+\mu|y-p_1|)^{N+2}}\leq C\frac{\mu^{\frac{N+2}{2}}}{(1+\mu|y-p_1|)^{\frac{N+2}{2}+\tau}}\frac{1}{(1+\mu|y-p_1|)^{\frac{N+2}{2}-\tau}}\cr
\leq&\frac{\mu^{\frac{N+2}{2}}}{(1+\mu|y-p_1|)^{\frac{N+2}{2}+\tau}}
\frac{1}{\mu^{\frac{1}{2}(\frac{N+2}{2}-\tau)}}\\
\leq &\frac{C\mu^{\frac{N+2}{2}}}{\mu^{1+\epsilon}(1+\mu|y-p_1|)^{\frac{N+2}{2}+\tau}},\,\,\,y\in \mathbb{M}_{1}\setminus \mathbb{B}_{1}.
\end{align}

From \eqref{j4-2} to \eqref{esj2-1}, we obtain
\begin{align}\label{esj4-sum2}
J_{4}\leq \frac{C}{\mu^{1+\epsilon}}
\frac{\mu^{\frac{N+2}{2}}}{(1+\mu|y-p_1|)^{\frac{N+2}{2}+\tau}},\,\,\,y\in \mathbb{M}_{1}\setminus \mathbb{B}_{1}.
\end{align}

Combining \eqref{esj4-sum1} and \eqref{esj4-sum2},  applying the symmetry we have
\begin{align}\label{j4-norm}
J_{4}\leq \frac{C}{\mu^{1+\epsilon}}\sum_{j=1}^{n}
\frac{\mu^{\frac{N+2}{2}}}{(1+\mu|y-p_j|)^{\frac{N+2}{2}+\tau}}.
\end{align}

By \eqref{eq-l1}, \eqref{eq-l1-1} and \eqref{j4-norm}, we obtain
$$
\|l_n\|_{\tilde{*}\tilde{*}}\leq \frac{C }{\mu^{1+\epsilon}}.
$$

\end{proof}

We also need the following estimate.
\begin{lemma}\label{lem3.3}
If $N\geq 5,$ then there holds
\begin{align*}
\|R_n(\psi)\|_{\tilde{*}\tilde{*}}\leq C \|\psi\|_{\tilde{*}}^{\min\{2^*-1,2\}}.
\end{align*}
\end{lemma}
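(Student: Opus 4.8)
The plan is to reduce the estimate to two elementary convexity inequalities for the function $f(t)=|t|^{2^*-2}t$ and then to the same weighted summation lemmas (Lemmas~\ref{lemb1}--\ref{lemb3}) already used in the proofs of Lemmas~\ref{lem3.1} and~\ref{lem3.2}. Abbreviate $W:=u_m+\sum_{j=1}^n Z_{p_j,\mu}\ge 0$, so that $R_n(\psi)=f(W+\psi)-f(W)-f'(W)\psi$. One has $|f(a+b)-f(a)-f'(a)b|\le C|b|^p$ when $1<p\le 2$, and $|f(a+b)-f(a)-f'(a)b|\le C(|a|^{p-2}|b|^2+|b|^p)$ when $p\ge 2$. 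Since $2^*-1=\frac{N+2}{N-2}$ satisfies $1<2^*-1<2$ when $N\ge 7$ and $2^*-1\ge 2$ when $N=5,6$, this yields the pointwise bounds
$$|R_n(\psi)(y)|\le C|\psi(y)|^{2^*-1}\quad(N\ge 7),\qquad |R_n(\psi)(y)|\le C\big(W(y)^{2^*-2}|\psi(y)|^2+|\psi(y)|^{2^*-1}\big)\quad(N=5,6).$$

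First I would treat $\big\||\psi|^{2^*-1}\big\|_{\tilde{*}\tilde{*}}$. Inserting $|\psi(y)|\le\|\psi\|_{\tilde{*}}\,\mu^{\frac{N-2}{2}}\sum_{j=1}^n(1+\mu|y-p_j|)^{-(\frac{N-2}{2}+\tau)}$ from \eqref{n1} and raising to the power $2^*-1$, the $\mu$-prefactor becomes exactly $\mu^{\frac{N+2}{2}}$, which is the normalization built into \eqref{n2}, so it remains to prove
$$\Big(\sum_{j=1}^n\frac{1}{(1+\mu|y-p_j|)^{\frac{N-2}{2}+\tau}}\Big)^{\frac{N+2}{N-2}}\le C\sum_{j=1}^n\frac{1}{(1+\mu|y-p_j|)^{\frac{N+2}{2}+\tau}}.$$
By the symmetry built into $X_s$ it suffices to check this for $y\in\mathbb{M}_1$, on which $p_1$ is the closest centre. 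Splitting off the $j=1$ term, its contribution is $(1+\mu|y-p_1|)^{-\frac{N+2}{N-2}(\frac{N-2}{2}+\tau)}\le(1+\mu|y-p_1|)^{-(\frac{N+2}{2}+\tau)}$ because $\frac{N+2}{N-2}\tau\ge\tau$, and the contribution of the remaining centres is controlled exactly as in \eqref{2.5} and in Lemma~\ref{lem3.2}, using $\mu|p_j-p_1|\gtrsim\mu/n\to+\infty$ together with Lemma~\ref{lemb1}. This gives $\big\||\psi|^{2^*-1}\big\|_{\tilde{*}\tilde{*}}\le C\|\psi\|_{\tilde{*}}^{2^*-1}$, which already proves Lemma~\ref{lem3.3} for $N\ge 7$, since then $\min\{2^*-1,2\}=2^*-1$.

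For $N=5,6$ it remains to bound $\big\|W^{2^*-2}|\psi|^2\big\|_{\tilde{*}\tilde{*}}$. Writing $W^{2^*-2}\le C\big(u_m^{2^*-2}+(\sum_{j=1}^n Z_{p_j,\mu})^{2^*-2}\big)$, the bubble part is estimated from the pointwise bounds on $Z_{p_j,\mu}$, the inequality $\sum_j Z_{p_j,\mu}\le C\mu^{\frac{N-2}{2}}\sum_j(1+\mu|y-p_j|)^{-(N-2)}$, and Lemmas~\ref{lemb1}--\ref{lemb3}; bookkeeping of the $\mu$-powers shows the resulting prefactor is $\mu^{\frac{N+2}{2}}$ times $\mu^{\frac{6-N}{2}}\le 1$, which is where $N\le 6$ enters. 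For $u_m^{2^*-2}|\psi|^2$ one uses that $u_m$ is a fixed function whose peaks $x_{m,l}$ stay uniformly separated from the circles carrying the $p_j$, so that on the support of $\hat{\zeta}$ the factor $u_m^{2^*-2}$ is $O(1)$ (with the extra decay $(1+|y|)^{-4}$ at infinity); combining this decay with the fast $\frac{N-2}{2}+\tau$ decay of the $\|\cdot\|_{\tilde{*}}$-weight and Lemma~\ref{lemb1} closes the bound. Since $\|\psi\|_{\tilde{*}}$ stays bounded (indeed $o(1)$) throughout the reduction, $\|\psi\|_{\tilde{*}}^{2^*-1}\le C\|\psi\|_{\tilde{*}}^2$ for $N\le 6$, and collecting everything yields $\|R_n(\psi)\|_{\tilde{*}\tilde{*}}\le C\|\psi\|_{\tilde{*}}^{\min\{2^*-1,2\}}$ in all cases.

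The main obstacle is the case $N=5,6$: the quadratic term $W^{2^*-2}|\psi|^2$ is not absorbed by $|\psi|^{2^*-1}$, so one must extract the $\mu$-gain coming from $N\le 6$ and genuinely exploit the mutual separation of the two families of bubbles (so that $u_m$ does not blow up where the $p_j$-bubbles are concentrated) in order to make the weighted summation estimate close. For the range $N\ge 7$ relevant to Theorem~\ref{thm1.3}, the single pointwise bound $|R_n(\psi)|\le C|\psi|^{2^*-1}$ reduces the proof to a routine application of Lemma~\ref{lemb1}, completely parallel to the corresponding estimates in \cite{PWY-18-JFA} and \cite{GMPY-20-JFA}.
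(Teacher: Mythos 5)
Your overall route is the same as the paper's (the paper omits the proof and invokes the argument of Lemma 2.4 in \cite{PWY-18-JFA}): a pointwise Taylor estimate for $f(t)=|t|^{2^*-2}t$ followed by the weighted summation estimates of Lemma \ref{lemb1}. For $N\geq 7$ --- the only case actually used in Theorem \ref{thm1.3} --- your argument is correct: $|R_n(\psi)|\leq C|\psi|^{2^*-1}$, and the required inequality $\bigl(\sum_{j}(1+\mu|y-p_j|)^{-(\frac{N-2}{2}+\tau)}\bigr)^{\frac{N+2}{N-2}}\leq C\sum_{j}(1+\mu|y-p_j|)^{-(\frac{N+2}{2}+\tau)}$ is exactly the $J_{4,2}$-type computation \eqref{esj42}, so this part is a faithful rendering of the standard proof.

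The low-dimensional branch, however, contains a genuine gap. First, the displayed pointwise bound should carry $W^{2^*-3}|\psi|^2$ (this is $|a|^{p-2}|b|^2$ with $p=2^*-1$), not $W^{2^*-2}|\psi|^2$; as written the bound is even false pointwise in the regime $|\psi|\ll W\ll 1$, and the subsequent $\mu$-bookkeeping inherits the wrong power. Second, the bookkeeping sentence does not close: with the correct exponent, $\bigl(\sum_j Z_{p_j,\mu}\bigr)^{2^*-3}|\psi|^2$ produces the prefactor $\mu^{\frac{6-N}{2}}\cdot\mu^{N-2}=\mu^{\frac{N+2}{2}}$ exactly, so there is no spare factor $\mu^{\frac{6-N}{2}}$ to discard; and in any case $\mu^{\frac{6-N}{2}}\leq 1$ holds for $N\geq 6$, not $N\leq 6$, so this cannot be ``where $N\leq 6$ enters.'' What actually makes $N=5$ work is the comparison of decay exponents, $(6-N)+(N-2)+2\tau=4+2\tau\geq\frac{N+2}{2}+\tau$, i.e. $N\leq 6+2\tau$, together with a separate treatment of $u_m^{2^*-3}|\psi|^2$: there $u_m^{2^*-3}$ is merely bounded, and since $N-2+2\tau<\frac{N+2}{2}+\tau$ when $N=5$ the weight alone does not suffice, so one must exploit the decay of $u_m$ away from the $p_j$'s via a splitting as in \eqref{21-1-6-21}; neither computation appears in your sketch. (Also, $N=6$ needs no quadratic term at all since $2^*-1=2$ there, and your reduction $\|\psi\|_{\tilde{*}}^{2^*-1}\leq C\|\psi\|_{\tilde{*}}^{2}$ for $N=5$ tacitly assumes $\|\psi\|_{\tilde{*}}$ bounded, which should be made a hypothesis.) None of this affects the case $N\geq 7$ relevant to the construction in Section \ref{s3}.
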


\begin{proof}
Since it can be proved by the same argument as that of Lemma 2.4 in \cite{PWY-18-JFA},
here we omit its proof.
\end{proof}

We define
\begin{align*}
I(u)=\frac{1}{2}\int_{\R^N}\big(|\nabla u|^2+V(y)u^2\big)dy-\frac{1}{2^*}\int_{\R^N}|u|^{2^*}dy.
\end{align*}
Set
\begin{align*}
F(t,\tilde{y}^{*},\mu)=I\Big(u_m+\sum\limits_{j=1}^nZ_{p_j,\mu}+\psi_n\Big).
\end{align*}
To get a solution of the form $u_m+\sum\limits_{j=1}^nZ_{p_j,\mu}+\psi_n,$
we only need to find a critical point for $F(t,\tilde{y}^{*},\mu)$ in
$B_{\vartheta}(r_{0},y^{*}_{0})\times [C_{1}n^{\frac{N-2}{N-4}},C_{2}n^{\frac{N-2}{N-4}}], $
where $\vartheta>0$ is small, $C_1,\,C_2$ are different constants.

Now we will prove Theorem \ref{thm1.3}.
\begin{proof}[\textbf{Proof of Theorem~\ref{thm1.3}}]
By direct computation, we have
\begin{align}\label{eq-f}
F(t,\tilde{y}^{*},\mu)=
I\Big(u_m+\sum\limits_{j=1}^nZ_{p_j,\mu}\Big)
+nO\Big(\frac{1}{\mu^{2+\epsilon}}\Big).
\end{align}
On the other hand, we get
\begin{align}\label{eq-i1}
I\Big(u_m+\sum\limits_{j=1}^nZ_{p_j,\mu}\Big)
=&I\Big(\sum\limits_{j=1}^nZ_{p_j,\mu}\Big)+I(u_m)+\frac{1}{2}\int_{\R^N}\sum\limits_{j=1}^nu^{2^*-1}_mZ_{p_j,\mu}\cr
&-\frac{1}{2^*}\int_{\R^N}
\Big(\big(u_m+\sum\limits_{j=1}^nZ_{p_j,\mu}\big)^{2^*}-u_m^{2^*}-\big(\sum\limits_{j=1}^nZ_{p_j,\mu}\big)^{2^*}\Big).
\end{align}
It is not difficult to check
\begin{align*}
\int_{\R^N}u_m^{2^*-1}Z_{p_j,\mu}=O\Big(\frac{1}{\mu^{\frac{N-2}{2}}}\Big).
\end{align*}
For $y\in\R^N\setminus \bigcup_{j=1}^n\mathbb{B}_j,$ we have
\begin{align*}
&\Big|\big(u_m+\sum\limits_{j=1}^nZ_{p_j,\mu}\big)^{2^*}-u_m^{2^*}-
\big(\sum\limits_{j=1}^nZ_{p_j,\mu}\big)^{2^*}\Big|\cr
&\leq C u_m^{2^*-1}\sum\limits_{j=1}^nZ_{p_j,\mu}+\Big(\sum\limits_{j=1}^nZ_{p_j,\mu}\Big)^{2^*}\leq C u_m^{2^*-1}\sum\limits_{j=1}^nZ_{p_j,\mu}.
\end{align*}
Hence we obtain
\begin{align*}
&\int_{\R^N\setminus \bigcup_{j=1}^n\mathbb{B}_j}\Big|\big(u_m+\sum\limits_{j=1}^nZ_{p_j,\mu}\big)^{2^*}-u_m^{2^*}-\big(\sum\limits_{j=1}^nZ_{p_j,\mu}\big)^{2^*}\Big|\cr
&\leq C\int_{\R^N}u_m^{2^*-1}\sum\limits_{j=1}^nZ_{p_j,\mu}
=O\Big(\frac{n}{\mu^{\frac{N-2}{2}}}\Big).
\end{align*}
By symmetry, we have
\begin{align*}
&\int_{\bigcup_{j=1}^n\mathbb{B}_j}\Big|\big(u_m+\sum\limits_{j=1}^nZ_{p_j,\mu}\big)^{2^*}-u_m^{2^*}-\big(\sum\limits_{j=1}^nZ_{p_j,\mu}\big)^{2^*}\Big|\cr
&= n\int_{\mathbb{B}_1}
\Big|(u_m+\sum\limits_{j=1}^nZ_{p_j,\mu})^{2^*}-u_m^{2^*}-(\sum\limits_{j=1}^nZ_{p_j,\mu})^{2^*}\Big|.
\end{align*}
There holds
\begin{align*}
\int_{\mathbb{B}_1}u_m^{2^*}=O\Big(\frac{1}{\mu^{\frac{N}{2}}}\Big),
\end{align*}
and
\begin{align*}
&\int_{\mathbb{B}_1}\Big|\big(u_m+\sum\limits_{j=1}^nZ_{p_j,\mu}\big)^{2^*}
-\big(\sum\limits_{j=1}^nZ_{p_j,\mu}\big)^{2^*}\Big|\cr
&\leq C\int_{\mathbb{B}_1}\Big(\sum\limits_{j=1}^nZ_{p_j,\mu}\Big)^{2^*-1}\cr
&\leq C\int_{\mathbb{B}_1}
\Big(U^{2^{*}-1}_{p_{1},\mu}+\frac{\mu^{\frac{N+2}{2}}}{(1+\mu|y-p_{1}|)^{(2^{*}-1)(N-2)(1-\tau_{1})}}\Big)
\leq \frac{C}{\mu^{\frac{N-2}{2}}},
\end{align*}
where
$\tau_{1}=\frac{N-4}{(N-2)^{2}}.$

Hence we have proved
\begin{align}\label{eq-i}
I\Big(u_m+\sum\limits_{j=1}^nZ_{p_j,\mu}\Big)
=I\Big(\sum\limits_{j=1}^nZ_{p_j,\mu}\Big)+I(u_m)+O\Big(\frac{n}{\mu^{\frac{N-2}{2}}}\Big).
\end{align}

Moreover, by direct computation we can obtain
\begin{align}\label{eq-Un}
I\Big(\sum\limits_{j=1}^nZ_{p_j,\mu}\Big)
=n\Big(A_{1}+A_{2}\frac{V(t,\tilde{y}^{*})}{\mu^{2}}
-\sum_{j=2}^{n}\frac{A_{3}}{\mu^{N-2}|p_{j}-p_{1}|^{N-2}}
+O\big(\frac{1}{\mu^{2+\epsilon}}\big)\Big),
\end{align}
where $A_{1}=\big(\ds\frac{1}{2}-\frac{1}{2^{*}}\big)\int_{\R^{N}}U^{2^{*}}_{0,1}$ and $A_{i}(i=2,3)$
are some positive constants.

It follows from \eqref{eq-f}, \eqref{eq-i} and \eqref{eq-Un} that
\begin{align}\label{eq-fend}
&F(t,\tilde{y}^{*},\mu)=
I\Big(\sum\limits_{j=1}^nZ_{p_j,\mu}\Big)+I(u_{m})+
nO\Big(\frac{1}{\mu^{2+\epsilon}}\Big)\cr
&=I(u_{m})+nA_{1}
+n\Big(\frac{A_{2}}{\mu^{2}}V(t,\tilde{y}^{*})
-\sum_{j=2}^{n}\frac{A_{3}}{\mu^{N-2}|p_{1}-p_{j}|^{N-2}}\Big)
+O\Big(\frac{n}{\mu^{2+\epsilon}}\Big),
\end{align}
where $A_{i}(i=1,2,3)$ are the same as those of \eqref{eq-Un}.

Now in order to find a critical point for $F(t,\tilde{y}^{*},\mu)$, we only need to
continue exactly as section 3 in \cite{PWY-18-JFA}. One can also see section 3 in \cite{PWW-19-JDE}.
Here we omit the detailed process of its proof.

\end{proof}

\appendix
\section{{Some Pohozaev identities}}\label{sa}
Set
\begin{align}\label{eqs2.1}
-\Delta u+V(|y'|,y'')u=u^{2^*-1},
\end{align}
and
\begin{align}\label{eqs2.2}
-\Delta \eta+V(|y'|,y'')\eta=(2^*-1)u^{2^*-2}\eta.
\end{align}

Then standard arguments give $u,\, \eta\in L^\infty(\mathbb R^N)$ and

\begin{align}\label{eq-u}
 |u(y)|,\; \; |\eta(y)|\le \frac C{(1+ |y|)^{N-2}}.
\end{align}

Suppose that $\Omega$ is a smooth domain in $\R^N$.

 We have the following identities which are used in section \ref{s2}
 by proving the non-degeneracy of the multi-bubbling solutions obtained in \cite{PWY-18-JFA}.
\begin{lemma}\label{lem2.1}
There holds
\begin{align}\label{eqs2.3}
&-\int_{\partial\Omega}\frac{\partial u}{\partial\nu}\frac{\partial\eta}{\partial y_i}-\int_{\partial\Omega}\frac{\partial\eta}{\partial\nu}\frac{\partial u}{\partial y_i}
+\int_{\partial\Omega}\langle\nabla u,\nabla \eta\rangle\nu_i+\int_{\partial\Omega}Vu\eta\nu_i
-\int_{\partial\Omega}u^{2^*-1}\eta\nu_i\cr
=&\int_{\Omega}\frac{\partial V}{\partial y_i}u\eta,
\end{align}
and
\begin{align}\label{eqs2.4}
&\int_{\Omega}u\eta\langle\nabla V,y-x_0\rangle+2\int_{\Omega}V\eta u
=-\int_{\partial\Omega}u^{2^*-1}\eta\langle\nu,y-x_0\rangle
-\int_{\partial\Omega}\frac{\partial u}{\partial\nu}\langle\nabla\eta,y-x_0\rangle\cr
&-\int_{\partial\Omega}\frac{\partial\eta}{\partial\nu}\langle\nabla u,y-x_0\rangle
+\int_{\partial\Omega}\langle\nabla u,\nabla\eta\rangle\langle\nu,y-x_0\rangle+\int_{\partial\Omega}Vu\eta\langle\nu,y-x_0\rangle\cr
&+\frac{2-N}{2}\int_{\partial\Omega}\eta\frac{\partial u}{\partial\nu}+\frac{2-N}{2}\int_{\partial\Omega}u\frac{\partial\eta}{\partial\nu}.
\end{align}
\end{lemma}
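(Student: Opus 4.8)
The two identities in Lemma~\ref{lem2.1} are Pohozaev-type identities for the coupled pair $(u,\eta)$ --- the first associated with translations, the second with dilations --- and the plan is to derive them by the standard device of testing each of \eqref{eqs2.1} and \eqref{eqs2.2} against a vector field applied to the \emph{other} unknown and then symmetrizing. By elliptic regularity $u,\eta\in C^{2}(\overline{\Omega})$ on any smooth bounded $\Omega$, so every integration by parts below is classical; the decay bound \eqref{eq-u} is only needed afterwards, when one wishes to take $\Omega=\R^{N}$, where it forces the boundary integrals at infinity to vanish.

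To obtain \eqref{eqs2.3} I would multiply \eqref{eqs2.1} by $\partial_{y_i}\eta$, multiply \eqref{eqs2.2} by $\partial_{y_i}u$, add the two, and integrate over $\Omega$. After one integration by parts the Laplacian part produces, using $\nabla u\cdot\partial_{y_i}(\nabla\eta)+\nabla\eta\cdot\partial_{y_i}(\nabla u)=\partial_{y_i}\langle\nabla u,\nabla\eta\rangle$, exactly the boundary terms $\int_{\partial\Omega}\langle\nabla u,\nabla\eta\rangle\nu_i-\int_{\partial\Omega}\bigl(\tfrac{\partial u}{\partial\nu}\partial_{y_i}\eta+\tfrac{\partial\eta}{\partial\nu}\partial_{y_i}u\bigr)$ with no interior remainder. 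On the other side the nonlinear terms collapse to a divergence, $u^{2^*-1}\partial_{y_i}\eta+(2^*-1)u^{2^*-2}\eta\,\partial_{y_i}u=\partial_{y_i}(u^{2^*-1}\eta)$, giving $\int_{\partial\Omega}u^{2^*-1}\eta\,\nu_i$, while $Vu\,\partial_{y_i}\eta+V\eta\,\partial_{y_i}u=\partial_{y_i}(Vu\eta)-(\partial_{y_i}V)u\eta$ gives $\int_{\partial\Omega}Vu\eta\,\nu_i-\int_{\Omega}(\partial_{y_i}V)u\eta$. Equating the two computations and transposing terms yields \eqref{eqs2.3}.

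To obtain \eqref{eqs2.4} I would instead test \eqref{eqs2.1} with $\langle\nabla\eta,y-x_0\rangle+\tfrac{N-2}{2}\eta$ and \eqref{eqs2.2} with $\langle\nabla u,y-x_0\rangle+\tfrac{N-2}{2}u$, add, and integrate. Using $\operatorname{div}(y-x_0)=N$, $\nabla\langle\nabla\eta,y-x_0\rangle=\nabla\eta+(D^{2}\eta)(y-x_0)$ and $(D^{2}u)\nabla\eta+(D^{2}\eta)\nabla u=\nabla\langle\nabla u,\nabla\eta\rangle$ (here $D^2$ is the Hessian), the interior term $\int_{\Omega}\langle\nabla u,\nabla\eta\rangle$ generated by the Laplacian cancels precisely because the weight multiplying $\eta$, resp.\ $u$, is $\tfrac{N-2}{2}$; what survives is the boundary expression $\int_{\partial\Omega}\langle\nabla u,\nabla\eta\rangle\langle\nu,y-x_0\rangle-\int_{\partial\Omega}\tfrac{\partial u}{\partial\nu}\langle\nabla\eta,y-x_0\rangle-\int_{\partial\Omega}\tfrac{\partial\eta}{\partial\nu}\langle\nabla u,y-x_0\rangle+\tfrac{2-N}{2}\int_{\partial\Omega}\bigl(\eta\tfrac{\partial u}{\partial\nu}+u\tfrac{\partial\eta}{\partial\nu}\bigr)$. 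The nonlinear terms sum to $\langle\nabla(u^{2^*-1}\eta),y-x_0\rangle+\tfrac{N-2}{2}\,2^{*}u^{2^*-1}\eta$, and since $2^{*}=\tfrac{2N}{N-2}$ forces $\tfrac{N-2}{2}\,2^{*}=N$, integrating kills the interior part and leaves only $\int_{\partial\Omega}u^{2^*-1}\eta\langle\nu,y-x_0\rangle$. Finally the potential terms sum to $V\langle\nabla(u\eta),y-x_0\rangle+(N-2)Vu\eta$, whose integral equals $-\int_{\Omega}u\eta\langle\nabla V,y-x_0\rangle-2\int_{\Omega}Vu\eta+\int_{\partial\Omega}Vu\eta\langle\nu,y-x_0\rangle$. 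Collecting all the pieces and moving the two interior $V$-terms to the left-hand side gives \eqref{eqs2.4}.

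I do not expect any genuine obstacle: the content is entirely the bookkeeping of the boundary integrals, and the one structural point that must be checked is that the constant $\tfrac{N-2}{2}$ in the test fields, together with the criticality $2^{*}=\tfrac{2N}{N-2}$, is exactly what makes all the interior remainders coming from the Laplacian and from the nonlinearity cancel --- a routine but slightly lengthy verification.
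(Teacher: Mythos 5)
Your proposal is correct and the underlying computation is the same as the paper's: in both cases the identities come from testing the coupled equations \eqref{eqs2.1}--\eqref{eqs2.2} with translation and dilation multipliers applied to the other unknown, symmetrizing, and collecting boundary terms. For \eqref{eqs2.3} your derivation is line-for-line the paper's.

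For \eqref{eqs2.4} there is a small but genuine organizational difference worth noting. The paper first tests with the raw dilation field $\langle\nabla\cdot,\,y-x_0\rangle$ alone, which leaves a surviving interior term $(2-N)\int_{\Omega}\langle\nabla u,\nabla\eta\rangle$; it then separately tests \eqref{eqs2.1} with $\eta$ and \eqref{eqs2.2} with $u$ to produce an auxiliary identity (their (2.4.4)--(2.4.5)) that expresses $\int_{\Omega}\langle\nabla u,\nabla\eta\rangle$ in terms of $\int_{\Omega}u^{2^*-1}\eta$, $\int_{\Omega}Vu\eta$, and boundary integrals, and substitutes. You instead bake the compensating scalar term $\tfrac{N-2}{2}\,\eta$ (resp.\ $\tfrac{N-2}{2}\,u$) into the multiplier from the start, so that the interior $\int_{\Omega}\langle\nabla u,\nabla\eta\rangle$ terms cancel immediately and the nonlinear interior term drops out via $\tfrac{N-2}{2}\cdot 2^*=N$. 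The two routes are algebraically equivalent and both lead to \eqref{eqs2.4}; yours is the more standard ``single combined Pohozaev multiplier'' presentation and makes the cancellation mechanism visible at once, while the paper's two-step version isolates the auxiliary bilinear energy identity as a reusable intermediate fact. Your structural check --- that the coefficient $\tfrac{N-2}{2}$ and the criticality $2^*=\tfrac{2N}{N-2}$ are exactly what kill the interior remainders --- is correct, and I verified the Hessian identity $(D^2u)\nabla\eta+(D^2\eta)\nabla u=\nabla\langle\nabla u,\nabla\eta\rangle$ that you rely on.
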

\begin{proof}
\textbf{Proof of \eqref{eqs2.3}.}
First we have
\begin{align*}
\int_{\Omega}(-\Delta u+Vu)\frac{\partial \eta}{\partial y_i}=\int_{\Omega}u^{2^*-1}\frac{\partial \eta}{\partial y_i},
\end{align*}
and
\begin{align*}
\int_{\Omega}(-\Delta\eta+V\eta)\frac{\partial u}{\partial y_i}=\int_{\Omega}(2^*-1)u^{2^*-2}\eta\frac{\partial u}{\partial y_i},
\end{align*}
which implies that
\begin{align}\label{eqs2.5}
\int_{\Omega}\Big(-\Delta u\frac{\partial\eta}{\partial y_i}+(-\Delta\eta)\frac{\partial u}{\partial y_i}+V u\frac{\partial\eta}{\partial y_i}+V\eta\frac{\partial u}{\partial y_i}\Big)
=\int_{\Omega}\Big(u^{2^*-1}\frac{\partial\eta}{\partial y_i}+(2^*-1)u^{2^*-2}\eta\frac{\partial u}{\partial y_i}\Big).
\end{align}
It is easy to check that
\begin{align}\label{eqs2.6}
\int_{\Omega}\Big(u^{2^*-1}\frac{\partial \eta}{\partial y_i}+(2^*-1)u^{2^*-2}\eta\frac{\partial u}{\partial y_i}\Big)=\int_{\Omega}\frac{\partial (u^{2^*-1}\eta)}{\partial y_i}
=\int_{\partial\Omega}u^{2^*-1}\eta\nu_i.
\end{align}
Moreover, similar to (2.7) in \cite{GMPY-20-JFA}, we have
\begin{align}\label{eqs2.7}
\int_{\Omega}\Big(-\Delta u\frac{\partial\eta}{\partial y_i}+(-\Delta\eta)\frac{\partial u}{\partial y_i} \Big)=-\int_{\partial\Omega}\frac{\partial u}{\partial \nu}\frac{\partial\eta}{\partial y_i}-\int_{\partial\Omega}\frac{\partial\eta}{\partial \nu}\frac{\partial u}{\partial y_i}+\int_{\partial\Omega}\langle\nabla u,\nabla\eta\rangle\nu_i,
\end{align}
and
\begin{align}\label{eqs2.8}
\int_{\Omega}\Big(Vu\frac{\partial\eta}{\partial y_i}+V\eta\frac{\partial u}{\partial y_i}\Big)=\int_{\Omega}V \frac{\partial}{\partial y_i}(u\eta)
=\int_{\partial\Omega}Vu\eta\nu_i-\int_{\Omega}u\eta\frac{\partial V}{\partial y_i}.
\end{align}
It follows from \eqref{eqs2.5} to \eqref{eqs2.8} that \eqref{eqs2.3} holds.

\textbf{Proof of \eqref{eqs2.4}.}
It is easy to check that
\begin{align}\label{*}
&\int_{\Omega}\big((-\Delta u+V u)\langle\nabla\eta,y-x_0\rangle+(-\Delta\eta+V\eta)\langle\nabla u,y-x_0\rangle\big)\cr
=&\int_{\Omega}\big(u^{2^*-1}\langle\nabla\eta,y-x_0\rangle
+(2^*-1)u^{2^*-2}\eta\langle\nabla u,y-x_0\rangle\big).
\end{align}
We find that
\begin{align}\label{2.4.2}
&\int_{\Omega}\big(u^{2^*-1}\langle\nabla\eta,y-x_0\rangle
+(2^*-1)u^{2^*-2}\eta\langle\nabla u,y-x_0\rangle\big)\cr
=&\int_{\Omega}\langle\nabla(u^{2^*-1}\eta),y-x_0\rangle
=\int_{\partial\Omega}u^{2^*-1}\eta\langle\nu,y-x_0\rangle-N\int_{\Omega}u^{2^*-1}\eta.
\end{align}
Also similar to (2.10) in \cite{GMPY-20-JFA}, we have
\begin{align}\label{2.4.3}
&\int_{\Omega}\big(-\Delta u\langle\nabla\eta,y-x_0\rangle+(-\Delta\eta)\langle\nabla u,y-x_0\rangle\big)\cr
=&-\int_{\partial\Omega}\frac{\partial u}{\partial\nu}\langle\nabla\eta,y-x_0\rangle-\int_{\partial\Omega}\frac{\partial\eta}{\partial\nu}\langle\nabla u,y-x_0\rangle\cr
&+\int_{\partial\Omega}\langle\nabla u,\nabla\eta\rangle\langle\nu,y-x_0\rangle
+(2-N)\int_{\Omega}\langle\nabla u,\nabla\eta\rangle.
\end{align}
On the other hand, there holds
\begin{align}\label{2.4.4}
2^*\int_{\Omega}u^{2^*-1}\eta=&\int_{\Omega}\big((-\Delta u \eta+u(-\Delta\eta)+V u\eta+V\eta u\big)\cr
=&2\int_{\Omega}\langle\nabla u,\nabla\eta\rangle-\int_{\partial\Omega}\eta\frac{\partial u}{\partial\nu}-\int_{\partial\Omega}u\frac{\partial\eta}{\partial\nu}+2\int_{\Omega}Vu\eta,
\end{align}
which yields
\begin{align}\label{2.4.5}
\int_{\Omega}\langle\nabla u,\nabla\eta\rangle=\frac{2^*}{2}\int_{\Omega}u^{2^*-1}\eta+\frac{1}{2}\int_{\partial\Omega}\eta\frac{\partial u}{\partial\nu}+\frac{1}{2}\int_{\partial\Omega}u\frac{\partial\eta}{\partial\nu}-\int_{\Omega}Vu\eta.
\end{align}
Moreover, we obtain
\begin{align}\label{2.4.6}
&\int_{\Omega}\big(V u\langle\nabla\eta,y-x_0\rangle+V\eta\langle\nabla u,y-x_0\rangle\big)
=\int_{\Omega}V\langle\nabla (u\eta),y-x_0\rangle\cr
&=\int_{\partial\Omega}Vu\eta\langle\nu,y-x_0\rangle
-\int_{\Omega}u\eta\langle\nabla V,y-x_0\rangle-N\int_{\Omega}Vu\eta.
\end{align}
Therefore, from \eqref{*} to \eqref{2.4.6} we know that \eqref{eqs2.4} holds.
\end{proof}

\section{the Green's functions}\label{sb-add}

 In this section, we mainly study the Green's function of $L_m$(see the definition of \eqref{eq-12-1}).

 For any function $g$ defined in $\mathbb R^N$, we define its corresponding function $g^\star\in H_s$ as follows.

First we define $\textbf{A}_j$ as

 \[
 \textbf{A}_j z= \Bigl( r \cos\big(\theta+\frac{2j \pi}m\big),  r \sin\big(\theta+\frac{2j \pi}m\big), z''\Big),\quad j=1, \cdots, m,
 \]
 where $z= (z', z'')\in \mathbb R^N$, $z'= (r\cos\theta, r\sin \theta)\in \mathbb R^2$, $z''\in \mathbb R^{N-2}$, while

 \[
 \textbf{B}_i z
 =\begin{cases}
\bigl( z_1,\cdots, z_{i-1}, -z_i, z_{i+1},\cdots, z_N),\quad & i=2,\vspace{0.12cm} \\
\bigl( z_1,\cdots, z_{i-1}, z_i, z_{i+1},\cdots, z_N),\quad &i=1,3,4,\cdot\cdot\cdot,N.
\end{cases}
 \]
 Let

 \[
 \hat{g}(y)=\frac1 m\sum_{j=1}^m g(\textbf{A}_j y),
 \]
 and

 \[
 g^\star(y) = \frac1{(N-1)}\sum_{i=2}^N \frac12 \bigl( \hat{g} (y)+ \hat{g} (\textbf{B}_i y)\bigr).
 \]
 Then $g^\star\in H_s.$

Noting that $\delta_x$ is not in $H_s,$ we consider

\begin{equation}\label{2-25-10n}
 L_m u =  \delta_x^\star,  \quad u\in H_s.
 \end{equation}
 The solution of \eqref{2-25-10n} is denoted as $G_m(y, x)$. We want to point out that

\[
  \delta_x^\star =\frac1 {N-1}\sum_{i=2}^N \frac12 \Bigl( \frac1 m\sum_{j=1}^m \delta_{\textbf{A}_j x}+ \frac1 m\sum_{j=1}^m \delta_{\textbf{B}_i \textbf{A}_j x}\Bigr).
 \]

 \begin{proposition}\label{add-propa.1}
Assume that $V(y)\geq 0$ is bounded in $\R^{N}.$
 The solution $G_m(y, x)$ of \eqref{2-25-10n} satisfies

 \[
 |G_m(y, x)|\le  \frac1 {N-1}\sum_{i=2}^N \frac12 \Bigl( \frac1 m\sum_{j=1}^m \frac{C}{|y- \textbf{A}_j x|^{N-2}}+ \frac1 m\sum_{j=1}^m
 \frac{C}{|y- \textbf{B}_i \textbf{A}_j x|^{N-2}}\Bigr)
 \]
 for all $x\in B_R(0)$, where $R>0$ is any fixed large constant.

 \end{proposition}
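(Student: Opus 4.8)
The plan is to compare $G_m(\cdot,x)$ with the symmetrized Newtonian potential of $\delta_x^\star$, namely
\[
\Phi(y,x):=\frac1{N-1}\sum_{i=2}^N\frac12\Big(\frac1m\sum_{j=1}^m\frac{c_N}{|y-\textbf{A}_jx|^{N-2}}+\frac1m\sum_{j=1}^m\frac{c_N}{|y-\textbf{B}_i\textbf{A}_jx|^{N-2}}\Big),\qquad c_N=\frac1{(N-2)\omega_{N-1}},
\]
which belongs to $H_s$ and solves $-\Delta_y\Phi(\cdot,x)=\delta_x^\star$; thus $\Phi$ is exactly the majorant in the statement, with $c_N$ absorbed into $C$. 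First I would record that $G_m$ is well defined and $G_m(y,x)=G_m(x,y)$: by the non-degeneracy established in Section~\ref{s2} (Proposition~\ref{thm1.2}, resp. Remark~\ref{rem-t}) the self-adjoint operator $L_m$ has trivial kernel on $H_s$, hence is invertible there.

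The second step is to move the good-signed term $V\ge0$ into the fundamental solution. Let $G_V(y,z)$ be the Green's function of $-\Delta+V$ on $\R^N$; the maximum principle gives $0\le G_V(y,z)\le c_N|y-z|^{2-N}$. Writing \eqref{2-25-10n} as $(-\Delta+V)G_m(\cdot,x)=\delta_x^\star+(2^*-1)u_m^{2^*-2}G_m(\cdot,x)$ one obtains the integral equation
\[
G_m(y,x)=\Psi(y,x)+(2^*-1)\int_{\R^N}G_V(y,z)\,u_m^{2^*-2}(z)\,G_m(z,x)\,dz,\qquad \Psi(y,x):=\int_{\R^N}G_V(y,z)\,d\delta_x^\star(z).
\]
Since $0\le G_V\le c_N|\cdot|^{2-N}$ and $\delta_x^\star$ is a positive measure, $0\le\Psi(y,x)\le\Phi(y,x)$, so it is $\Psi$ that carries the (admissible) point singularities at the $\textbf{A}_jx$ and $\textbf{B}_i\textbf{A}_jx$. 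The remaining integral operator, call it $T$, only sees the weight $u_m^{2^*-2}$, which — thanks to the cut-off in $u_m$ — is supported in a fixed bounded set $\mathcal O$ and, by Lemma~\ref{lem2.2}, satisfies $u_m^{2^*-2}(z)\le C\sum_{j=1}^m\lambda_m^2(1+\lambda_m|z-x_{m,j}|)^{-4}$.

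I would then solve $G_m=\Psi+T[G_m]$ by a Neumann series $G_m=\sum_{\ell\ge0}T^\ell[\Psi]$, and check two things: convergence, and domination by $C\,\Phi(\cdot,x)$. Convergence rests on non-degeneracy: $T$ is a positivity-preserving operator with a weakly singular, compactly supported kernel, hence compact on the relevant weighted space, and $1$ cannot be an eigenvalue — an eigenfunction $g=T[g]$, extended to $\R^N$ by the same formula, would satisfy $L_mg=0$ with $g$ decaying like $|y|^{2-N}$, so (after a standard bootstrap) $g\in H_s$ and $g=0$ by Section~\ref{s2}. For the domination one estimates $T[\Phi(\cdot,x)]$, and inductively $T^\ell[\Phi(\cdot,x)]$, by splitting $\mathcal O$ into small balls around the $x_{m,k}$, small balls around the poles $\textbf{A}_jx,\ \textbf{B}_i\textbf{A}_jx$, and the complement, and applying the convolution inequalities of Appendix~\ref{s5} (Lemmas~\ref{lemb2}--\ref{lemb3}) with the above bound on $u_m^{2^*-2}$ in each piece; away from the concentration points this reproduces the computations of \cite{PWY-18-JFA,GMPY-20-JFA} and yields a bound by a fixed multiple of $\Phi(\cdot,x)$, uniformly for $x\in B_R(0)$ (with constants depending on $m$, which is harmless since $m$ is fixed throughout this section).

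The main obstacle is the regime in which a pole $\textbf{A}_jx$ (or $\textbf{B}_i\textbf{A}_jx$) is close to a concentration point $x_{m,k}$ — which can occur, since $x$ ranges over all of $B_R(0)$, e.g. when $|x'|$ is near $r_m$ — so that $G_V(y,z)$, $|z-\textbf{A}_jx|^{2-N}$ and $u_m^{2^*-2}(z)$ are all large near the same point. There the crude convolution bounds are not enough; instead one works at the blow-up scale $\lambda_m^{-1}$ around $x_{m,k}$, using that $G_m(\cdot,x)$ and $\Phi(\cdot,x)$ share the same leading $\frac{c_N}{2m}|z-\textbf{A}_jx|^{2-N}$ singularity, so their difference is of the milder order $|z-\textbf{A}_jx|^{4-N}\le C\,|z-\textbf{A}_jx|^{2-N}$, and invoking the non-degeneracy of the single-bubble operator $-\Delta-(2^*-1)U_{0,1}^{2^*-2}$ (Lemma~\ref{lem2.3}) to control the rescaled profile. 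Once this delicate contribution is also bounded by $C\,\Phi(y,x)$, the proof is complete.
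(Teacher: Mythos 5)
Your reduction to the integral equation $G_m=\Psi+T[G_m]$, with $T$ the positivity-preserving operator of kernel $(2^*-1)G_V(y,z)u_m^{2^*-2}(z)$, is fine, but the central step --- solving it by the Neumann series $G_m=\sum_{\ell\ge0}T^\ell[\Psi]$ --- fails. Convergence of a Neumann series needs the spectral radius of $T$ to be below $1$, not merely that $1$ is not an eigenvalue; the latter only gives invertibility of $I-T$ by Fredholm theory. Here the spectral radius is in fact larger than $1$: near each concentration point $u_m\approx U_{x_{m,j},\lambda_m}$, and since $-\Delta U=U^{2^*-1}$ one has $(-\Delta)^{-1}\bigl[(2^*-1)U^{2^*-2}U\bigr]=(2^*-1)U$, so the positive function $U$ is (approximately) an eigenfunction of $T$ with eigenvalue $2^*-1=\frac{N+2}{N-2}>1$. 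Because $T$ preserves positivity and $\Psi>0$, the iterates $T^\ell[\Psi]$ pick up this growing mode and the series diverges; correspondingly, your inductive bounds $T^\ell[\Phi(\cdot,x)]\le C_\ell\,\Phi(\cdot,x)$ cannot be summed, so the domination argument does not close either. The closing paragraph about the regime where a pole approaches a concentration point, which appeals to Lemma~\ref{lem2.3} to ``control the rescaled profile,'' is also not an argument that repairs this.

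The paper avoids the issue by iterating the parametrix only \emph{finitely} many times and then using the invertibility of the full operator $L_m$, not a series: starting from $\omega_1=C_N|y-x|^{2-N}$ it defines $\omega_i$ solving $-\Delta\omega+V\omega=(2^*-1)u_m^{2^*-2}\omega_{i-1}$ in $B_{2R}(0)$, each step improving the singularity from $|y-x|^{2(i-1)-N}$ to $|y-x|^{2i-N}$ because $u_m^{2^*-2}$ is bounded; after finitely many steps the remainder is bounded, one sets $\upsilon=G_m(\cdot,x)-\xi\,\omega^\star$ and solves $L_m\upsilon=g$ with $g\in L^\infty\cap H_s$ compactly supported, using Proposition~\ref{thm1.2} for solvability and a compactness/contradiction argument (again via non-degeneracy) for the uniform bound $|\upsilon|\le C|g|_{L^\infty}$, followed by a bootstrap giving $|\upsilon(y)|\le C|y|^{2-N}$. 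If you want to keep your integral-equation formulation, you must replace the Neumann series by this finite iteration plus an appeal to the invertibility of $L_m$ (equivalently of $I-T$) together with an a priori bound; as written, the proposal has a genuine gap at its main step.
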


 \begin{proof}

 Let $\omega_1= \frac{C_N}{ |y-x|^{N-2}}$, which satisfies $-\Delta \omega_1= \delta_x$ in $\mathbb R^N$.  Let $\omega_2$ be the solution of

 \[
 \begin{cases}
 -\Delta \omega +V(y)\omega= (2^*-1) u_m^{2^*-2} \omega_1,& \text{in}\; B_{2R}(0),\\
 \omega=0,& \text{on}\; \partial B_{2R}(0).
 \end{cases}
 \]
 Then $\omega_2\ge 0$ and

 \[
  \omega_2(y)= \int_{B_{2R}(0)}G(z, y) (2^*-1) u_m^{2^*-2} \omega_1\le C \int_{B_R(0)}\frac1{|y-z|^{N-2}}\frac1{|z-x|^{N-2}}\,dz\le \frac{C}{|y-x|^{N-4}},
 \]
 where $G(z,y)$ is the Green's function of the positive operator $-\Delta+V(y)$ in $B_{2R}(0)$ with zero boundary condition.
 We can continue this process to find $\omega_i$, which is the solution of

  \[
 \begin{cases}
 -\Delta \omega+V(y)\omega= (2^*-1) u_m^{2^*-2} \omega_{i-1},& \text{in}\; B_{2R}(0),\\
 \omega=0,& \text{on}\; \partial B_{2R}(0),
 \end{cases}
 \]
 and satisfies

 \[
 \begin{split}
 0\le  \omega_i (y)=& \int_{B_{2R}(0)}G(z, y) (2^*-1) u_m^{2^*-2} \omega_{i-1}\\
 \le &C \int_{B_{2R}(0)}\frac1{|y-z|^{N-2}}\frac1{|z-x|^{N-2(i-1)}}\,dz
 \le \frac{C}{|y-x|^{N-2i}}.
 \end{split}
 \]

 Let $i$ be large satisfying $\omega_i\in L^\infty(B_{2R}(0))$.  Define

 \[
 \omega=\sum_{l=1}^i \omega_l,
 \]
and $\upsilon= G_m(y, x)- \xi \omega^\star$, where $\xi(y)=\xi(|y|)\in C^\infty_0(B_{2R}(0))$, $\xi=1$ in $B_{\frac32 R}(0)$  and $0\le \xi\le 1$. Then we have

\begin{equation}\label{1-1-6-21}
L_m \upsilon = g,
\end{equation}
where $g\in L^\infty\cap H_s$ and $g=0$ in $\mathbb R^N\setminus B_{2R}(0)$.  Applying Proposition~\ref{thm1.2},  \eqref{1-1-6-21}
 has a solution $\upsilon\in H_s.$

We still need to prove that $|\upsilon(y)|\le \frac{C}{|y|^{N-2}}$ as $|y|\to +\infty$.

First, we claim that $|\upsilon|\le C |g|_{L^\infty(\mathbb R^N)}$.
Indeed, assume that there are $g_n\in L^\infty\cap H_s$, $\upsilon_n$ satisfying \eqref{1-1-6-21}, with $|g_{n}|_{L^\infty(\mathbb R^N)}\to 0$
and $|\upsilon_n|_{L^\infty(\mathbb R^N)}=1$.  Then, $\upsilon_n\to \upsilon$ in $C^1_{loc}(\mathbb R^N)$, which satisfies $L_m \upsilon=0$. Therefore $\upsilon=0$. On the other hand,
we have

\begin{equation}\label{2-1-6-21}
|\upsilon_n(y)| \leq C_N \int_{\mathbb R^N}\frac1{|z-y|^{N-2} }\big|(2^*-1) u_m^{2^*-2} \upsilon_n\,\big|dz+  C_N \int_{\mathbb R^N}\frac1{|z-y|^{N-2} }
|g_n|
\,dz.
\end{equation}
Hence we obtain that $|\upsilon_n(y)|\le \frac{C}{|y|^2}$ as $|y|\to +\infty,$ which contradicts to $|\upsilon_n|_{L^\infty(\mathbb R^N)}=1$.

So $\upsilon$ is bounded. Then it follows from \eqref{2-1-6-21} that  $|\upsilon(y)|\le \frac{C}{(1+|y|)^2}$. Also,
we have
\[
|\upsilon(y)|\le C\int_{\mathbb R^N}\frac1{|z-y|^{N-2} } u_m^{2^*-2}\frac{C}{(1+|y|)^2} +\frac{C}{(1+|y|)^{N-2}}\le \frac{C}{(1+|y|)^4}.
\]
Repeating this process, we can prove $|\upsilon(y)|\le \frac{C}{|y|^{N-2}}$ as $|y|\to +\infty.$
 \end{proof}

\section{{Basic estimates}}\label{s5}

For each fixed $k$ and $j$, $k\neq j$, we consider the following
function
\begin{equation}\label{b.1}
g_{k,j}(y)=\frac{1}{(1+|y-x_{j}|)^{\alpha}}\frac{1}{(1+|y-x_{k}|)^{\beta}},
\end{equation}
where $\alpha\geq 1$ and $\beta\geq 1$ are two constants.
 \begin{lem}\label{lemb1}(Lemma B.1, \cite{WY1})
 For any constants $0<\delta\leq \min\{\alpha,\beta\}$, there is a constant $C>0$, such that
 $$
 g_{k,j}(y)\leq \frac{C}{|x_{k}-x_{j}|^{\delta}}\Big(\frac{1}{(1+|y-x_{k}|)^{\alpha+\beta-\delta}}+\frac{1}{(1+|y-x_{j}|)^{\alpha+\beta-\delta}}\Big).
 $$
 \end{lem}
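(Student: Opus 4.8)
The plan is to reduce the bound to two elementary one--variable estimates by splitting $\R^{N}$ according to which of the two centres $x_{k},x_{j}$ the point $y$ is far from; this is precisely the argument behind Lemma~B.1 of \cite{WY1}. Write $d=|x_{k}-x_{j}|>0$. The triangle inequality $d\le|y-x_{k}|+|y-x_{j}|$ shows that for every $y$ one has $|y-x_{k}|\ge d/2$ or $|y-x_{j}|\ge d/2$, so that $\R^{N}=\Omega^{(k)}\cup\Omega^{(j)}$ with $\Omega^{(k)}=\{\,|y-x_{k}|\ge d/2\,\}$ and $\Omega^{(j)}=\{\,|y-x_{j}|\ge d/2\,\}$. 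Since the right--hand side of the asserted inequality is a sum of two non--negative terms, it is enough to prove
\[
g_{k,j}(y)\le \frac{C}{d^{\delta}}\,\frac{1}{(1+|y-x_{j}|)^{\alpha+\beta-\delta}}\quad\text{on }\Omega^{(k)},
\]
together with the symmetric estimate (with $x_{k},x_{j}$ and $\alpha,\beta$ interchanged) on $\Omega^{(j)}$.

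On $\Omega^{(k)}$ I would proceed as follows. From $|y-x_{k}|\ge d/2$ one gets $1+|y-x_{k}|\ge d/2$, while $|y-x_{j}|\le|y-x_{k}|+d\le 3|y-x_{k}|$ gives $1+|y-x_{j}|\le 3(1+|y-x_{k}|)$. Then factor
\[
g_{k,j}(y)=\frac{1}{(1+|y-x_{j}|)^{\alpha+\beta-\delta}}\cdot\frac{(1+|y-x_{j}|)^{\beta-\delta}}{(1+|y-x_{k}|)^{\beta}} .
\]
Here the hypothesis $\delta\le\beta$ enters: the exponent $\beta-\delta$ is $\ge 0$, so $(1+|y-x_{j}|)^{\beta-\delta}\le 3^{\beta-\delta}(1+|y-x_{k}|)^{\beta-\delta}$, and the second factor is bounded by $C(1+|y-x_{k}|)^{-\delta}\le C\,(d/2)^{-\delta}=C'\,d^{-\delta}$, which is the required bound. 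Running the mirror computation on $\Omega^{(j)}$ --- where instead $\delta\le\alpha$ is used --- yields $g_{k,j}(y)\le C\,d^{-\delta}(1+|y-x_{k}|)^{-(\alpha+\beta-\delta)}$. Since $\R^{N}=\Omega^{(k)}\cup\Omega^{(j)}$, combining the two region bounds gives the lemma.

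There is no genuine obstacle here: the whole argument is the triangle inequality together with the monotonicity of $t\mapsto t^{\beta-\delta}$ (respectively $t\mapsto t^{\alpha-\delta}$) on $[0,\infty)$, and it is valid for every $d>0$ (the factor $d^{-\delta}$ only makes the inequality easier as $d\to 0$, so no lower bound on the separation is needed). The two points one must not overlook are that $\Omega^{(k)}\cup\Omega^{(j)}$ really covers $\R^{N}$, and that the two halves of the assumption $0<\delta\le\min\{\alpha,\beta\}$ are each invoked exactly once --- $\delta\le\beta$ on $\Omega^{(k)}$ and $\delta\le\alpha$ on $\Omega^{(j)}$. In the present paper this elementary estimate is the basic device for controlling bubble interactions; it is applied, for instance, in \eqref{esj42} within the proof of Lemma~\ref{lem3.2}.
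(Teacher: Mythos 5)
Your proof is correct: the covering $\R^{N}=\Omega^{(k)}\cup\Omega^{(j)}$ is valid by the triangle inequality, the factorization and the use of $\delta\le\beta$ (resp. $\delta\le\alpha$) on each region are exactly right, and the constant depends only on $\alpha,\beta,\delta$. The paper itself gives no proof of Lemma \ref{lemb1} (it simply quotes Lemma B.1 of \cite{WY1}), and your argument is essentially the standard one used there, so there is nothing to add.
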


\begin{lem}\label{lemb2}(Lemma B.2, \cite{WY1})
For any constant $0<\delta<N-2$, there is a constant $C>0$, such
that
$$
\int \frac{1}{|y-z|^{N-2}}\frac{1}{(1+|z|)^{2+\delta}}dz\leq
\frac{C}{(1+|y|)^{\delta}}.
$$
\end{lem}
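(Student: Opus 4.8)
The plan is to prove this by the classical three-region decomposition of $\R^N$ that one uses to estimate Riesz-type convolutions $\int|y-z|^{-a}(1+|z|)^{-b}\,dz$; here $a=N-2$, $b=2+\delta$, and $a+b=N+\delta>N$, which is exactly the integrability threshold that makes the $(1+|y|)^{-\delta}$ decay appear.

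First I would reduce to $|y|\geq1$. The integrand has an integrable singularity at $z=y$ (since $N-2<N$) and decays like $|z|^{-(N+\delta)}$ at infinity (since $(N-2)+(2+\delta)>N$), so the integral is finite and continuous in $y$; hence it is bounded by an absolute constant on $\{|y|\leq1\}$, where $(1+|y|)^{-\delta}$ is bounded below. For $|y|\geq1$ we have $(1+|y|)^{-\delta}\asymp|y|^{-\delta}$, so it suffices to bound the integral by $C|y|^{-\delta}$.

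Next, set $R=|y|\geq1$ and split $\R^N=A_1\cup A_2\cup A_3$ with $A_1=\{|z-y|\leq R/2\}$, $A_2=\{|z|\leq R/2\}$, and $A_3$ the remaining set, where both $|z-y|>R/2$ and $|z|>R/2$. On $A_1$ one has $|z|\geq R/2$, so $(1+|z|)^{-(2+\delta)}\leq CR^{-(2+\delta)}$ pulls out and $\int_{|z-y|\leq R/2}|y-z|^{-(N-2)}\,dz\leq CR^{2}$ (using $N-2<N$), giving a contribution $\leq CR^{-\delta}$. On $A_2$ one has $|y-z|\geq R/2$, so $|y-z|^{-(N-2)}\leq CR^{-(N-2)}$ pulls out, and $\int_{|z|\leq R/2}(1+|z|)^{-(2+\delta)}\,dz\leq CR^{N-2-\delta}$ because $2+\delta<N$ — this is exactly where the hypothesis $\delta<N-2$ enters — so this contribution is again $\leq CR^{-\delta}$.

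The only delicate region is $A_3$: the crude bound $|y-z|^{-(N-2)}\leq CR^{-(N-2)}$ there leaves $\int_{|z|>R/2}(1+|z|)^{-(2+\delta)}\,dz$, which \emph{diverges} precisely when $2+\delta<N$, so this region must be split further according to whether $|z-y|\geq|z|/2$ or $|z-y|<|z|/2$. On the first piece $|y-z|^{-(N-2)}\leq C|z|^{-(N-2)}$, and $\int_{|z|>R/2}|z|^{-(N-2)}(1+|z|)^{-(2+\delta)}\,dz\leq C\int_{|z|>R/2}|z|^{-N-\delta}\,dz\leq CR^{-\delta}$, which converges because $\delta>0$. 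On the second piece, $|z-y|<|z|/2$ together with $|z-y|>R/2$ forces $|z|\asymp R$ and $R/2<|z-y|<R$; hence $(1+|z|)^{-(2+\delta)}\leq CR^{-(2+\delta)}$ and $\int_{R/2<|z-y|<R}|y-z|^{-(N-2)}\,dz\leq CR^{2}$, again $\leq CR^{-\delta}$. Adding the four contributions yields $\int\leq CR^{-\delta}=C|y|^{-\delta}$, and together with the $|y|\leq1$ case this gives the claimed bound. The splitting of $A_3$ is the single non-routine point; everything else is a direct computation in polar coordinates.
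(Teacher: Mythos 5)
Your proof is correct. Note that the paper itself gives no argument for this lemma: it is quoted verbatim as Lemma B.2 of \cite{WY1}, so there is nothing in the text to compare against except that citation; your four-region decomposition (ball around $y$, ball around $0$, and the far region split according to whether $|z-y|\gtrsim |z|$ or $|z|\asymp |y|$) is precisely the standard argument behind the cited estimate, and each region is handled with the right exponent bookkeeping --- in particular you correctly isolate where $\delta<N-2$ is needed (the region $|z|\le |y|/2$) and where $\delta>0$ is needed (the tail $|z-y|\ge |z|/2$). The only cosmetic remark is that on $\{|y|\le 1\}$ you could bound the integral directly (splitting at $|z-y|\le 3$, say) instead of invoking continuity, but this does not affect the validity of the proof.
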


Let us recall that
$$
Z_{t,\tilde{y}^{*},\mu}(y)=\sum_{j=1}^{n}\hat{\zeta}(y) U_{p_{j},\mu}=[N(N-2)]^{\frac{N-2}{4}}\sum_{j=1}^{n}
\hat{\zeta}(y)\Big(\frac{\mu}{1+\mu^{2}|y-p_{j}|^{2}}\Big)^{\frac{N-2}{2}}.
$$

Just by the same argument as that of Lemma B.3 in \cite{PWY-18-JFA}, we can prove
\begin{lem}\label{lemb3}
Suppose that $N\geq 5.$ Then there is a small constant $\iota>0$,
such that
$$
\int
\frac{1}{|y-z|^{N-2}}Z^{\frac{4}{N-2}}_{t,\tilde{y}^{*},\mu}(z)\sum_{j=1}^{n}\frac{1}{(1+\mu|z-p_{j}|)^{\frac{N-2}{2}+\iota}}dz
\leq
\sum_{j=1}^{n}\frac{C}{(1+\mu|y-p_{j}|)^{\frac{N-2}{2}+\tau+\iota}}.
$$
\end{lem}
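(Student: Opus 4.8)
The plan is to dominate $Z^{4/(N-2)}_{t,\tilde y^*,\mu}$ by a sum of standard bubble profiles and then reduce the claimed bound to elementary convolution estimates handled by Lemmas~\ref{lemb1} and \ref{lemb2}. Since $0\le\hat\zeta\le1$ and $U_{p_j,\mu}(z)\le C\mu^{(N-2)/2}(1+\mu|z-p_j|)^{-(N-2)}$, one has $Z_{t,\tilde y^*,\mu}(z)\le C\mu^{(N-2)/2}\sum_{j=1}^n(1+\mu|z-p_j|)^{-(N-2)}$. If $N\ge6$ then $\tfrac4{N-2}\le1$, so the power of the sum is bounded by the sum of the powers and $Z^{4/(N-2)}_{t,\tilde y^*,\mu}(z)\le C\mu^2\sum_j(1+\mu|z-p_j|)^{-4}$. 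If $N=5$ one first notes that $\sum_j(1+\mu|z-p_j|)^{-(N-2)}\le C$ uniformly in $z$: the nearest centre contributes $O(1)$, and for the others $|z-p_j|\ge\frac12|p_i-p_j|$ while $\mu|p_i-p_j|\ge c\mu\,d(i,j)/n$ with $d(i,j)=\min(|i-j|,n-|i-j|)$, so the remaining sum is $\le C(n/\mu)^{N-2}\sum_{d\ge1}d^{-(N-2)}\le C$ because $N-2=3>1$ and $n/\mu\to0$; consequently $Z^{4/3}_{t,\tilde y^*,\mu}(z)\le C\mu^2\sum_j(1+\mu|z-p_j|)^{-3}$. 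In both cases we get $Z^{4/(N-2)}_{t,\tilde y^*,\mu}(z)\le C\mu^2\sum_{j=1}^n(1+\mu|z-p_j|)^{-\gamma}$ with $\gamma:=\min\{4,N-2\}$, and for every $N\ge5$ we have $\gamma>2$ and $\tau<\gamma-2-\tau$ (since $\tau=\tfrac{N-4}{N-2}<1$), which is all that will be needed.

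Inserting this, the left side is $\le C\mu^2\sum_{i,l}\mathcal I_{i,l}(y)$ with $\mathcal I_{i,l}(y)=\int|y-z|^{-(N-2)}(1+\mu|z-p_i|)^{-\gamma}(1+\mu|z-p_l|)^{-(\frac{N-2}2+\iota)}\,dz$. For the diagonal terms $i=l$, rescaling $z=p_i+\bar z/\mu$ absorbs all powers of $\mu$ (indeed $2+(N-2)-N=0$) and turns $\mu^2\mathcal I_{i,i}$ into $\int|\bar Y-\bar z|^{-(N-2)}(1+|\bar z|)^{-(\gamma+\frac{N-2}2+\iota)}\,d\bar z$ with $\bar Y=\mu(y-p_i)$. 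Applying Lemma~\ref{lemb2} when $\gamma+\frac{N-2}2+\iota<N$, and the elementary variant $\int|\bar Y-\bar z|^{-(N-2)}(1+|\bar z|)^{-\rho}\,d\bar z\le C(1+|\bar Y|)^{-(N-2)}$ valid for $\rho>N$ otherwise (for integer $N\ge5$ and $\iota$ small the exponent never hits the value $N$, so no logarithm occurs), gives
\[
\mu^2\,\mathcal I_{i,i}(y)\le\frac{C}{(1+\mu|y-p_i|)^{\min\{\gamma+\frac{N-2}2+\iota-2,\,N-2\}}}\le\frac{C}{(1+\mu|y-p_i|)^{\frac{N-2}2+\tau+\iota}},
\]
where the last inequality uses $\gamma-2\ge\tau$ and $\frac{N-2}2\ge\tau+\iota$, both true for $N\ge5$. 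Summing over $i$ produces a term of the required form.

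For the off-diagonal terms $i\ne l$, I would first apply Lemma~\ref{lemb1} in its $\mu$-rescaled version with an exponent $\bar\delta$ chosen so that $\tau<\bar\delta<\min\{\gamma-2-\tau,\ \gamma,\ \tfrac{N-2}2\}$, a nonempty range for every $N\ge5$, obtaining
\[
\frac1{(1+\mu|z-p_i|)^{\gamma}(1+\mu|z-p_l|)^{\frac{N-2}2+\iota}}\le\frac{C}{(\mu|p_i-p_l|)^{\bar\delta}}\Big(\frac1{(1+\mu|z-p_i|)^{\rho}}+\frac1{(1+\mu|z-p_l|)^{\rho}}\Big),
\]
with $\rho:=\gamma+\tfrac{N-2}2+\iota-\bar\delta$. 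Each single-bubble integral is then estimated exactly as in the diagonal case; the constraint $\bar\delta\le\gamma-2-\tau$ is precisely what keeps the resulting exponent $\min\{\rho-2,N-2\}$ at least $\frac{N-2}2+\tau+\iota$, so $\mu^2\mathcal I_{i,l}(y)\le C(\mu|p_i-p_l|)^{-\bar\delta}\big((1+\mu|y-p_i|)^{-(\frac{N-2}2+\tau+\iota)}+(1+\mu|y-p_l|)^{-(\frac{N-2}2+\tau+\iota)}\big)$. Finally, since the $p_j$ are equally spaced on a circle of radius $t\approx r_0$ we have $\mu|p_i-p_l|\ge c\,\mu\,d(i,l)/n$, and since $\mu\sim n^{(N-2)/(N-4)}$ this gives $\sum_{l\ne i}(\mu|p_i-p_l|)^{-\bar\delta}\le C(n/\mu)^{\bar\delta}\sum_{d\ge1}d^{-\bar\delta}\le C$ uniformly in $i$ — exactly here the choice $\bar\delta>\tau=\tfrac{N-4}{N-2}$ and the concentration rate $\mu\sim n^{(N-2)/(N-4)}$ enter. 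Summing the first bracket over $l$ then over $i$, and the second over $i$ then over $l$, bounds the off-diagonal contribution by $C\sum_j(1+\mu|y-p_j|)^{-(\frac{N-2}2+\tau+\iota)}$ as well, which completes the estimate. The only genuinely delicate points — and hence the main obstacle — are the low dimensions $N=5,6$: the failure of subadditivity when $\frac4{N-2}>1$ forces the uniform bound on $\sum_j(1+\mu|z-p_j|)^{-(N-2)}$, and the exponent $\gamma+\frac{N-2}2+\iota$ may exceed $N-2$, so that Lemma~\ref{lemb2} has to be replaced by its $|y-z|^{-(N-2)}$-dominated companion; for $N\ge7$ the argument is direct.
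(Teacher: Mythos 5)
Your argument is, in substance, the standard one that the paper itself invokes by simply citing Lemma B.3 of \cite{PWY-18-JFA}: dominate $Z^{\frac{4}{N-2}}_{t,\tilde y^*,\mu}$ by $C\mu^{2}\sum_{j}(1+\mu|z-p_j|)^{-\gamma}$ with $\gamma=\min\{4,N-2\}$ (handling the failure of subadditivity at $N=5$ by the uniform bound on $\sum_j(1+\mu|z-p_j|)^{-(N-2)}$), separate the bubbles with Lemma \ref{lemb1}, and finish with the rescaled convolution estimate of Lemma \ref{lemb2}. Your exponent bookkeeping is correct: $\gamma-2\ge\tau$ handles the diagonal terms, the window $\tau<\bar\delta<\min\{\gamma-2-\tau,\gamma,\frac{N-2}{2}\}$ is nonempty for every $N\ge5$, and the companion estimate $\int|\bar Y-\bar z|^{-(N-2)}(1+|\bar z|)^{-\rho}\,d\bar z\le C(1+|\bar Y|)^{-(N-2)}$ for $\rho>N$ is indeed needed only at $N=6$, where $\rho=N+\iota$ avoids the logarithm.

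The one step that does not survive as written is the tail bound $\sum_{l\ne i}(\mu|p_i-p_l|)^{-\bar\delta}\le C(n/\mu)^{\bar\delta}\sum_{d\ge1}d^{-\bar\delta}\le C$: the series $\sum_{d\ge1}d^{-\bar\delta}$ converges only when $\bar\delta>1$, while your constraint $\bar\delta<\gamma-2-\tau$ forces $\bar\delta<\frac23$ at $N=5$ (and even for $N\ge6$ you only imposed $\bar\delta>\tau$, which allows $\bar\delta\le1$). The conclusion is still correct, and the repair is exactly the mechanism you name at the end: the sum has only about $n$ terms, so for $\bar\delta<1$ one has $\sum_{d=1}^{n/2}d^{-\bar\delta}\le Cn^{1-\bar\delta}$ and hence
\begin{equation*}
\sum_{l\ne i}\frac{1}{(\mu|p_i-p_l|)^{\bar\delta}}\le C\,n\,\mu^{-\bar\delta}\le C\,n^{\,1-\bar\delta\frac{N-2}{N-4}}\rightarrow 0,
\end{equation*}
precisely because $\bar\delta>\tau=\frac{N-4}{N-2}$ and $\mu\ge L_{0}n^{\frac{N-2}{N-4}}$; alternatively, for $N\ge6$ one may simply choose $\bar\delta\in(1,2-\tau)$. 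With this correction your proof is complete and coincides with the argument the paper refers to.
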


%

\section{{An example of the potential $V(r,y^{*})$}}\label{sb}
Here we give an example of $V(\hat{y},y^{*})$ which satisfies the assumptions $(V)$ and $(\tilde{V}).$
We define
\begin{equation*}
V(r,y^*)=
  \left\{
    \begin{array}{ll}
       r^2-4r\Big(\sum\limits_{j=5}^Ny_j\Big)+\Big(\sum\limits_{j=5}^Ny_j^2\Big)+1,\,\,\,&B_\rho(r_0,y_0^*),\\
\geq0, &\R^N\backslash B_\rho(r_0,y_0^*),
    \end{array}
  \right.
\end{equation*}
where $\rho$ is the same as that of \cite{PWY-18-JFA} and $(r_0,y_0^*)$ is defined below.
By some direct computations, we can check that
\begin{align*}
f(r,y^*):=r^2V(r,y^*)=r^4-4r^3\Big(\sum\limits_{j=5}^Ny_j\Big)+r^{2}\Big(\sum\limits_{j=5}^Ny_j^2\Big)+r^2.
\end{align*}
We have
\begin{align*}
\frac{\partial f}{\partial r}=4r^3-12r^2\Big(\sum\limits_{j=5}^Ny_j\Big)+2r\Big(\sum\limits_{j=5}^Ny_j^2\Big)+2r,
\end{align*}
and
\begin{align*}
\frac{\partial f}{\partial y_i}=-4r^3+2r^2y_i,\,\,\,\,i=5,\cdots,N.
\end{align*}
Suppose
that $\frac{\partial f}{\partial r}=0,\,\frac{\partial f}{\partial y_i}=0$, we obtain
\begin{align*}
y_i=2r,\,\,\,\hbox{for}\,i=5,\cdots,N, r_0=\sqrt{\frac{1}{8N-34}},\,\,\,\,\,\,\,y_{0,i}=2\sqrt{\frac{1}{8N-34}}(i=5,...,N).
\end{align*}
Therefore, $(r_0,y^{*}_{0})$ is a critical  point of
the function $f(r,y^*)$ and $V(r_0,y^{*}_{0})=\frac{1}{2}>0$.
Also
\begin{align*}
\frac{\partial^2f}{\partial r^2}=12r^2-24r\Big(\sum\limits_{j=5}^Ny_j\Big)+2\Big(\sum\limits_{j=5}^Ny_j^2\Big)+2,
\end{align*}
\begin{align*}
\frac{\partial^2f}{\partial r\partial y_i}=-12r^2+4ry_i,\,\,\,\,i=5,\cdots,N,
\end{align*}
and
\begin{align*}
\frac{\partial^2f}{\partial y_i^2}=2r^2(i=5,\cdots,N),\,\,\,\,\,\,\,\,\frac{\partial^2f}{\partial y_i\partial y_j}=0(i,j=5,...,N,i\neq j).
\end{align*}
By direct computation, we obtain
\begin{equation*}
B=
  \left(
    \begin{array}{cccc}
      \frac{\partial^2f}{\partial r^2}&\frac{\partial^2f}{\partial r\partial y_5}&\cdots&\frac{\partial^2f}{\partial r\partial y_N}\\
      \frac{\partial^2f}{\partial r\partial y_5}&\frac{\partial^2f}{\partial y_5\partial y_5}&\cdots&0\\
      \cdots&\cdots&\cdots&\cdots\\
      \frac{\partial^2f}{\partial r\partial y_N}&0&\cdots&\frac{\partial^2f}{\partial y_N\partial y_N}
    \end{array}
  \right)_{(N-3)\times (N-3)}.
\end{equation*}
Also by some tedious computation, we can obtain
 the eigenvalues
 of the matrix $B$ are as follows:
 \begin{align*}
|\lambda I-B|_{(r_0,y_0^*)}
=\Big[\big(\lambda-\frac{\partial^2f}{\partial r^2}\big)(\lambda-2r^2)-\sum\limits_{j=5}^N(-12r^2+4ry_{j})^2\Big]
(\lambda-2r^2)^{N-5}\Big|_{(r_0,y_0^*)}=0,
 \end{align*}
 which implies that
\begin{align*}
\Big(\lambda-\frac{\partial^2f}{\partial r^2}\mid_{(r_0,y_0^*)}\Big)(\lambda-2r_0^2)=\sum\limits_{j=5}^N(-12r_0^2+4r_0y_{0j})^2,\,\,\,\text{or}\,\,
(\lambda-2r^2)^{N-5}\Big|_{(r_0,y_0^*)}=0.
 \end{align*}
 By further computation, we can check that $\min\{\lambda_1,\lambda_2\}<0$
 and
 $$
 \lambda_3=\cdots=\lambda_{N-3}=2r_0^2>0.
 $$
Hence the assumption $(V)$ holds.

On the other hand, we recall
\begin{align*}
V(r,y)=r^2-4r\Big(\sum\limits_{j=5}^Ny_j\Big)+\Big(\sum\limits_{j=5}^Ny_j^2\Big)+1,\,\,
\text{in}\,\,B_\rho(r_0,y_0^*).
\end{align*}
We obtain in $B_\rho(r_0,y_0^*)$
\begin{align*}
\frac{\partial V}{\partial r}=2r-4\sum\limits_{j=5}^Ny_j,\,\,\,\,\,\,\,\frac{\partial V}{\partial y_i}=2y_i-\frac{4y_i}{r}\Big(\sum\limits_{j=5}^Ny_j\Big),i=1,2,3,4,
\end{align*}
\begin{align*}
\frac{\partial V}{\partial y_k}=-4r+2y_k,\,\,\,\,\,k=5,\cdots,N,
\end{align*}
\begin{align}\label{wqf1}
\frac{\partial^2V}{\partial r^2}=2,\,\,\,\,
\frac{\partial^2V}{\partial r\partial y_i}=\frac{2y_{0,i}}{r_{0}}(i=1,2,3,4),\,\,\,
\frac{\partial^2V}{\partial r\partial y_k}=-4(k=5,...,N),
\end{align}
\begin{align}\label{wqf2}
\frac{\partial^2V}{\partial y_i\partial y_i}=2-\frac{4r^2-4y_i^2}{r^3}\Big(\sum\limits_{j=5}^Ny_j\Big)(i=1,2,3,4),
\,\,\,\frac{\partial V}{\partial y_k^2}=2(k=5,\cdots,N)
\end{align}
and
\begin{align}\label{wqf3}
\frac{\partial^2V}{\partial y_k\partial y_j}=0(j\neq k, k,j=5,\cdots,N).
\end{align}
Then from \eqref{wqf1} to \eqref{wqf3}, we obtain in $B_\rho(r_0,y_0^*)$
\begin{align*}
\Delta V=&\sum\limits_{j=1}^N\frac{\partial^2V}{\partial y_i\partial y_i}=\sum\limits_{i=1}^4\Big(2-\frac{4r^2-4y_i^2}{r^3}(y_5+\cdots+y_N)\Big)+2(N-4)\cr
=&2N-\frac{12}{r}(y_5+\cdots+y_N).
\end{align*}
Hence
$$
\Delta V\Big|_{(r_{0},y^{*}_{0})}=96-22N.
$$
For $y_0^*=(y_{0,5},y_{0,6}\cdots,y_{0,N}),$ one has
\begin{align*}
\frac{\partial \Delta V}{\partial y_1}\Big|_{(r_0,y_0^*)}=\frac{12y_1}{r^3}(y_5+\cdots+y_N)\Big|_{(r_0,y_0^*)}=\frac{24y_{0,1}}{r_0^2}(N-4),
\end{align*}
\begin{align*}
\frac{\partial(\Delta V)}{\partial y_k}\Big|_{(r_0,y_0^*)}=-\frac{12}{r_0},k=5,\cdots,N.
\end{align*}

Since $y_{0,i}=2r_0(i=5,\cdots,N),$ we obtain
\begin{equation}\label{eq-matrix-2}
A_{i,l}=
  \left\{
    \begin{array}{ll}
      \Big[ 2-(\frac{24y_{0,i}(N-4)}{2r_0^2(96-22N)}+\frac{\nu_1}{\langle\nu,x_1\rangle})(34-8N)r_0],
 \text{when}\,\,i=l=1;
\vspace{0.12cm}\\
 -4,
\text{when}\,\,i=1,
l=2,3,...,N-3;
\vspace{0.12cm}\\
\cos\frac{2i\pi}{m}(-4-(\frac{-6}{r_0(96-22N)}+\frac{\nu_{i+1}}{\langle \nu,x_{1}\rangle})(34-8N)r_0),\\
 \text{when}\,\,i=2,3,...,N-3,l=1;
\vspace{0.12cm}\\
    2,
\text{when}\,\,i=l,i,l=2,3,...,N-3,
\vspace{0.12cm}\\
    0,
\text{when}\,\,i\neq l,\,i,l=2,3,...,N-3.
    \end{array}
  \right.
\end{equation}
Therefore, from \eqref{eq-matrix-2} we have
\begin{align*}
&\det(A_{i,l})_{(N-3)\times (N-3)}
=\prod_{i=2}^{N-3}A_{i,i}\big(A_{1,1}-\sum_{k=2}^{N-3}\frac{A_{k,1}A_{1,k}}{A_{k,k}}\big)\\
&=\prod_{i=2}^{N-3}2
\Big( 2-(\frac{24y_{0,i}(N-4)}{2r_0^2(8-22(N-4))}+\frac{\nu_1}{\langle\nu,x_1\rangle})(34-8N)r_0\big)
\\
&\quad\quad
+\sum_{k=2}^{N-3}2\cos\frac{2i\pi}{m}(-4-(\frac{-6}{r_0(8-22(N-4))}+\frac{\nu_{i+3}}{\langle \nu,x_{1}\rangle})(34-8N)r_0)
     \Big)\\
     &\neq 0 .
\end{align*}
Hence, the assumption $(\tilde{V})$ also holds.\\
\vspace{0.2cm}
\textbf{Acknowledgements}
The authors would like to thank the referees for their useful comments and suggestions.
 Q. He was partially supported by the fund from NSFC (No. 12061012) and the special foundation for Guangxi Ba Gui Scholars.
C. Wang was partially supported by NSFC (No.12071169)
and the Fundamental
Research Funds for the Central Universities(No. KJ02072020-0319).
Q. Wang was partially supported by NSFC (No. 12126356).


\begin{thebibliography}{99}









\bibitem{bw}
C. Bandle, J. Wei,
Non-radial clustered spike solutions for semilinear elliptic problems on $S^{N}.$ J. Anal. Math. 102 (2007), 181-208.


\bibitem{bc}
V. Benci, G. Cerami, Existence of positive solutions of the equation
$-\Delta u+a(x)u=u^{\frac{N+2}{N-2}}$ in $\R^{N}$. J. Funct. Anal.
88 (1990), 90-117.

\bibitem{BL}
 H. Brezis, Y. Li, Some nonlinear elliptic equations have only constant solutions. J. Partial Differ. Equ. 19 (2006), 208-217.

\bibitem{BP}
H. Brezis, L.A. Peletier, Elliptic equations with critical exponent on spherical caps of $S^3$. J. Anal. Math. 98 (2006), 279-316.

\bibitem{CNY1}
D.Cao, E.S. Noussair, S. Yan, Existence and uniqueness results on single-peaked solutions of a semilinear problem. Ann. Inst. H. Poincare Anal. Non Linear. 15 (1998), 71-111.

\bibitem{CNY2}
D.Cao, E.S. Noussair, S. Yan, Solutions with multiple peaks for nonlinear elliptic equations.
 Proc. Roy. Soc. Edinburgh Sect. A. 129 (1999), 235-264.


\bibitem{cwy}
 W. Chen, J. Wei, S. Yan, Infinitely many positive solutions for the
Schr\"{o}dinger equations in $\R^{N}$ with critical growth. J.
Differential Equations 252 (2012), 2425-2447.



\bibitem{dp}
M. del Pino, P.  Felmer, Multi-peak bound states for nonlinear Schr\"{o}dinger equations.
Ann. Inst. H. Poincar\'{e} Anal. Non Lin\'{e}aire 15 (1998), 127-149.



\bibitem{DLY} Y. Deng, C. -S. Lin, S. Yan, On the prescribed scalar curvature
problem in $\mathbb R^N$, local uniqueness and periodicity. J. Math. Pures
Appl. 104 (2015), 1013--1044.

\bibitem{D1}
O. Druet, From one bubble to several bubbles: the low-dimensional case. J. Differential Geom. 63 (2003), 399-473.

\bibitem{DH1}
O. Druet, E. Hebey, Elliptic equations of Yamabe type. Int. Math. Res. Surv. IMRS 1 (2005), 1-113.

\bibitem{DH2}
O. Druet, E. Hebey, Stability for strongly coupled critical elliptic systems in a fully inhomogeneous medium. Anal. PDE 2 (3)
(2009), 305-359.

\bibitem{GS}
B. Gidas, J. Spruck, Global and local behavior of positive solutions of nonlinear elliptic equations. Comm. Pure Appl.
Math. 34 (1981), 525-598.

\bibitem{GLPY-18-JDE}
Y. Guo, B. Li, A. Pistoia, Y. Yan,
Infinitely many non-radial solutions to a critical equation on annuus,
J. Differential Equations 265 (2018), 4076-4100.

\bibitem{GMPY-20-JFA}
Y. Guo, M. Musso, S. Peng, Y. Yan, Non-degeneracy of multi-bubbling solutions for the prescribed scalar curvature equations and applications. J. Funct. Anal. 279 (2020), 108553, 29 pp.

\bibitem{GMPY-21-arxiv}
Y. Guo, M. Musso, S. Peng, Y. Yan, Non-degeneracy of multi-bump solutions for the prescribed scalar curvature equations and applications,
arXiv:2106.15423.


\bibitem{LWX}Y. Y. Li, J. Wei, H. Xu, Multi-bump solutions of $-\Delta u=K(x)u^{\frac{n+2}{n-2}}$ on lattices in $\R^n$. J. Reine Angew. Math., J. Reine Angew. Math. 743 (2018), 16-211.

\bibitem{li}
P. L. Lions, The concentration-compactness principle in the calculus
of variations. The locally compact case. I. Ann. Inst. H. Poincar\'e
Anal. Non Lin\'eaire 1 (1984), 109-145.


\bibitem{li1}
P. L. Lions,
 The concentration-compactness principle in the calculus
of variations. The locally compact case. II. Ann. Inst. H. Poincar\'e
Anal. Non Lin\'eaire 1 (1984), 223-283.



\bibitem{PWW-19-JDE}
S. Peng, C. Wang, S. Wei, Constructing solutions for the prescribed scalar curvature problem via local Pohozaev identities.
 J. Differential Equations 267 (2019), 2503-2530.


\bibitem{PWY-18-JFA}
S. Peng, C. Wang, S. Yan, Construction of solutions via local Pohozaev identities. J. Funct. Anal. 274 (2018), 2606-2633.


\bibitem{Re}
O. Rey, The role of the Green's function in a nonlinear elliptic problem involving the critical
Sobolev exponent. J. Funct. Anal. 89 (1990), 1-52.

\bibitem{Re1}
O.  Rey, Boundary effect for an elliptic neumann
problem with critical nonlinearity, Commun. in Partial Differential Equations, 22 (1997), 1055-1139.




\bibitem{wy}
 J.   Wei, S. Yan, Infinitely many positive solutions for the nonlinear Schr\"{o}dinger equations in $\R^N$. Calc. Var. Partial Differential Equations 37 (2010), 423-439.

\bibitem{WY1}
J.  Wei, S. Yan, Infinitely many solutions for the prescribed scalar
curvature problem on $S^N$. J. Funct. Anal. 258 (2010), 3048-3081.




\bibitem{WY4}
 J. Wei, S. Yan, Infinitely many positive solutions for an elliptic problem with critical or supercritical growth. J. Math. Pures Appl. (9) 96 (2011), 307-333.


\end{thebibliography}
\end{document}